\documentclass[reqno,12pt]{amsart}
\usepackage{geometry} \usepackage{graphicx}
\usepackage{fullpage}
\usepackage{setspace}
\usepackage{datetime}
\usepackage{tikz}
\usepackage{mathtools}
\usepackage{enumitem}
\usetikzlibrary{er}
\usetikzlibrary{arrows.meta}
\usepackage{amsmath,amsfonts,amssymb}
\usepackage{cancel}
\usepackage{verbatim,enumitem,graphics}
\usepackage{microtype}
\usepackage{xcolor}
\usepackage[backref=page]{hyperref}
\hypersetup{%
    colorlinks,
    linkcolor={red!50!black},
    citecolor={green!50!black},
    urlcolor={blue!50!black}
}
\usepackage{dsfont}
\usepackage[abbrev,msc-links,backrefs]{amsrefs} 
\usepackage{doi}
\usepackage{lineno}
\usepackage[normalem]{ulem}
\usepackage{float}

\makeatletter
\def\moverlay{\mathpalette\mov@rlay}
\def\mov@rlay#1#2{\leavevmode\vtop{%
   \baselineskip\z@skip \lineskiplimit-\maxdimen
   \ialign{\hfil$\m@th#1##$\hfil\cr#2\crcr}}}
\newcommand{\charfusion}[3][\mathord]{
    #1{\ifx#1\mathop\vphantom{#2}\fi
        \mathpalette\mov@rlay{#2\cr#3}
      }
    \ifx#1\mathop\expandafter\displaylimits\fi}
\makeatother

\newtheorem{theorem}{Theorem}[section]
\newtheorem{lemma}[theorem]{Lemma}
\newtheorem{proposition}[theorem]{Proposition}
\newtheorem{fact}[theorem]{Fact}

\newtheorem{problem}[theorem]{Problem}

\theoremstyle{definition}
\newtheorem{definition}[theorem]{Definition}

\theoremstyle{remark}
\newtheorem{claim}{Claim}
\newtheorem{remark}[theorem]{Remark}
\newtheorem*{claim*}{Claim}
\newtheorem*{remark*}{Remark}

\newcommand{\ZZ}{\mathbb Z}
\newcommand{\EE}{\mathbb E}

\newcommand{\PP}{\mathbb P}
\newcommand{\cF}{\mathcal{F}}

\newcommand{\cG}{\mathcal{G}}
\newcommand{\cL}{\mathcal{L}}
\newcommand{\cE}{\mathcal{E}}

\newcommand{\Z}{\mathbb{Z}}

\newcommand{\dcup}{\charfusion[\mathbin]{\cup}{\cdot}}

\def\d{\delta }

\def\l{\ell }

\newcommand{\set}[1]{\left\{ #1 \right\}}

\newcommand\ordarrow{\mathrel{\overset{\makebox[1pt]{\mbox{\normalfont\tiny\sffamily ord}}}{\longrightarrow}}}

\newcommand{\narrow}{\charfusion[\mathbin]{\scriptsize{/}}{\longrightarrow}}

\let\tilde = \widetilde

\DeclareMathOperator{\rev}{rev}
\DeclareMathOperator{\Hyp}{HG}

\allowdisplaybreaks

\begin{document}

\onehalfspacing

\title{On Ramsey numbers of Steiner systems}

\author{Ayush Basu}
\author{Daniel Dobak}
\author{Vojt\v{e}ch R\"odl}
\address{Department of Mathematics, Emory University, 
    Atlanta, GA, USA}
\email{\{ayush.basu|daniel.dobak|vrodl\}@emory.edu}
    \author{Marcelo Sales}
    \address{Department of Mathematics, University of California, Irvine, CA, USA}
\email{mtsales@uci.edu}

\thanks{The first and second authors were partially supported by NSF grant DMS 2300347. The fourth author was supported by US Air Force grant FA9550-23-1-0298.}





\begin{abstract}
    A $k$-uniform hypergraph $H$ is called a \emph{partial $(k,\ell)$-system} if every set of $\ell$ vertices of $V(H)$ is contained in at most one edge of $H$. We prove the existence of a partial $(k,k-1)$-system $H$ whose Ramsey number with $r \geq 4$ colors grows as a tower of height $k-1$.
\end{abstract}

\maketitle

\section{Introduction}\label{sec:intro}

For a natural number $N$, we set $[N]=\{1,\ldots,N\}$. Given a set $X\subseteq [N]$, we denote by $X^{(k)}$ the set of $k$-tuples of $X$. For two sets $X$ and $Y$, we say that $X<Y$ if $\max(X) < \min(Y)$.  A $k$-uniform hypergraph (or $k$-graph) is a pair $H=(V,E)$, where $V$ is the set of vertices and $E\subseteq V^{(k)}$ is the set of edges. For convenience, we will sometimes view $H$ simply as its set of edges.

Given a $k$-graph $H$, we say
\begin{align}\label{eq:arrow}
    [N]\longrightarrow (H)^k_r
\end{align}
holds if every $r$-coloring of the $k$-tuples $[N]^{(k)}$ yields a monochromatic copy of $H$. Let the \emph{Ramsey number of $H$ in $r$ colors} denoted by $R(H;r)$ be the minimum integer $N$ such that the relation (\ref{eq:arrow}) holds. The negative relation $[N]\narrow (H)^k_r$ indicates that (\ref{eq:arrow}) does not hold.

The existence of $R(H;r)$ for arbitrary $H$ follows from Ramsey's theorem \cite{R29}. For $2$-graphs, Erd\H{o}s and Szekeres \cite{ES35} showed that 
\begin{align*}
    R(K_n^{(2)};r)<r^{rn},
\end{align*}
where $K_n^{(2)}$ is the complete $2$-graph on $n$ vertices and $r\geq 2$. This result was later generalized to complete hypergraphs by Erd\H{o}s and Rado \cite{ER52}, who proved that
\begin{align*}
    R(K_n^{(k)};r)\leq t_{k-1}(crn\log r),
\end{align*}
for $r\geq 2$ and some positive constant $c$, where $t_i$ is the tower function given by $t_0(x)=x$ and $t_{i+1}(x)=2^{t_i(x)}$ for $i\geq 0$. We remark that both bounds above are known to be of the correct order of magnitude for sufficiently many colors (see \cites{EHR65, Ab72, CFS13, CF21}).

Although Ramsey numbers for complete hypergraphs grow like a tower, the behavior of Ramsey numbers for sparser graphs can vary greatly. On one hand, it was shown by a series of authors \cites{NORS08, CFKO08, CFKO09, CRST83} that the Ramsey number of a bounded degree hypergraph is always linear. In contrast with that, it was shown in \cite{CFS09} that there are sparser $3$-uniform hypergraphs on $n$ vertices and $O(n^2)$ edges whose Ramsey number in 4 colors is at least $t_2(cn)$, i.e., doubly exponential in $n$. However, the hypergraph constructed in \cite{CFS09} contains many pairs of high codegree. Thus, a natural question arises whether there are linear $3$-uniform hypergraphs $H$ with doubly exponential Ramsey number in 4 colors. In this paper, we construct a linear 3-uniform hypergraph $H$ on $n$ vertices with $R(H;4) \ge t_2(n^c)$. 

More generally, given integers $2\leq \ell < k$, a \emph{partial Steiner $(k,\ell)$-system} is a $k$-graph $H$ with the property that every $\ell$-element subset of the vertex set of $H$ is contained in at most one edge. Throughout our exposition, we will often refer to such hypergraphs simply as $(k,\ell)$-systems. Note that a $(k,2)$-system is just a linear $k$-graph. Also note that a $(k,\ell)$-system has at most $O(n^\ell)$ edges. Our main result provides a tower-type lower bound for the Ramsey numbers of $(k,k-1)$-systems. 

\begin{theorem}\label{thm:main}
For every $k\geq 3$, there exists a positive constant $c_k$ and an integer $h_0:=h_0(k)$ such that the following holds. For every integer $h\geq h_0$, there exists a $(k,k-1)$-system $H$ on $h$ vertices such that
\begin{align*}
    R(H;4)\geq t_{k-1}(h^{c_k}).
\end{align*}
\end{theorem}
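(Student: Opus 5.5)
We argue by induction on $k$, using an Erd\H{o}s--Hajnal stepping-up construction, following the template of Conlon--Fox--Sudakov \cite{CFS09}. The quantity we carry through the induction is a \emph{negative} relation: for each $k\ge 3$ we want a $(k,k-1)$-system $H_k$ on $h$ vertices and a $4$-coloring of $[N]^{(k)}$, with $N=t_{k-1}(h^{c_k})$, that has no monochromatic copy of $H_k$. Since the stepping-up coloring is defined on $k$-tuples of $\{0,1\}^{M}$ ordered lexicographically, we will in fact prove the stronger statement that the coloring avoids every \emph{ordered} copy, in every vertex order, of a suitably structured family of $(k,k-1)$-systems. This strengthening is what makes the induction go through. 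Write $\delta(a,b)$ for the highest bit in which $a$ and $b$ differ. A $k$-tuple $a_1<\dots<a_k$ then yields $\delta_i=\delta(a_i,a_{i+1})$ for $i<k$, and the $(k-1)$-tuple $(\delta_1,\ldots,\delta_{k-1})$ has a monotonicity pattern (increasing or decreasing steps). The standard facts we use are that consecutive $\delta_i$ are distinct, and that $\delta(a_1,a_k)=\max_i\delta_i$.

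\textbf{Base case $k=3$.} This is the heart of the paper. We need a linear $3$-graph $H$ on $h$ vertices and a $4$-coloring of $[N]^{(3)}$ with $N=2^{2^{h^{c}}}$ and no monochromatic $H$. First we take a $2$-coloring of $[M]^{(2)}$ with $M=2^{h^{c}}$ that has no monochromatic clique of size about $\log M$, and in fact no monochromatic copy of a dense random-like graph. Then we step up to $3$-tuples of $\{0,1\}^M$. The colour of $\{a_1,a_2,a_3\}$ is determined by two things: the pair colour $\chi(\delta_1,\delta_2)$, and whether $\delta_1<\delta_2$ or $\delta_1>\delta_2$. This gives $4$ colours. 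For $H$ we take a random linear $3$-graph, for instance a random subfamily of a Steiner triple system, with edge density about $h^{-1+\epsilon}$. The key step is to check that every ordering of $V(H)$ produces, among the edges of $H$, a structure whose $\delta$-pairs must span a monochromatic large clique, or a monochromatic copy of the forbidden graph, in $\chi$. Concretely, in an ordered monochromatic copy the vertices are $v_1<\dots<v_h$ with $\delta$-values $\delta_1,\dots,\delta_{h-1}$. Every edge with a fixed monotone pattern forces a relation between local maxima of the $\delta$-sequence. Linearity leaves us with only $O(h^2)$ edges, far fewer than $\binom h3$, so we cannot simply find every triple. Instead we need a pseudorandomness property of $H$: for every ordering, and for every large set of "local maximum" positions, $H$ contains enough edges crossing the corresponding blocks. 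We then use a union bound over the $h!$ orderings, which costs $e^{O(h\log h)}$ against a failure probability of $e^{-\Omega(h^{1+\epsilon})}$.

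\textbf{Induction step $k-1\to k$.} Given the coloring of $[M]^{(k-1)}$ for $H_{k-1}$, we colour a $k$-tuple of $\{0,1\}^M$ by the colour of $(\delta_1,\ldots,\delta_{k-1})$ when that sequence is monotone. For non-monotone sequences we use a fixed colour scheme; with $4$ colours there is room for this, as in the Erd\H{o}s--Hajnal argument for $k\ge4$. We build $H_k$ from $H_{k-1}$ by a "cone/blow-up" construction: each vertex of $H_{k-1}$ is replaced by a pair or a short path, and we add one new vertex to every edge. We must check two things. First, $(k-1)$-sets still lie in at most one edge. Second, any monochromatic copy of $H_k$ forces its $\delta$-sequence to be monotone on a sub-configuration, and that sub-configuration induces a monochromatic copy of $H_{k-1}$ in the lower coloring. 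The vertex count grows only polynomially at each step, which is where the constant $c_k$ comes from.

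The main obstacle I expect is the base case: getting a double-exponential bound while $H$ is linear. In \cite{CFS09} this is exactly where high codegrees are used, since they force a monochromatic clique among the $\delta$'s. Here that has to be replaced by the random-linear pseudorandomness argument above, uniformly over all vertex orderings. My first attempt would be to take $H$ random inside a Steiner triple system and prove, via Janson's inequality, a "no large ordered independent configuration" property that the stepping-up lemma can use.
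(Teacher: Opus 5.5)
Your proposal has genuine gaps in both the base case and the induction step.

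\textbf{Base case: the forcing mechanism is missing.} You correctly identify the difficulty, but the step that resolves it is absent. The paper never argues directly about an unordered $H$. It splits the problem in two.

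First, Theorem~\ref{thm: steppingup} shows that the stepping-up colouring avoids a specific \emph{ordered} family: $\cF_I^{(k)}(n)$ in colours $\{0,1\}$, and $\rev\cF_I^{(k)}(n)$ in colours $\{2,3\}$. A member of this family is a chain $x_0<x_1<\dots<x_n$ with a special edge $\{x_0\}\cup\{x_i\}_{i\in I}$. In addition, for every $J\in\{2,\dots,n\}^{(k-1)}$ there is a private apex $x_J$ with $x_0\le x_J\le x_1$. Putting every apex to the left of the chain is what replaces high codegree. For $k=3$, the edges $\{x_J,x_\ell,x_{\ell+1}\}$ force the chain to be a left comb, whose projection is a monochromatic $K_{n-1}$. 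So Erd\H{o}s' bound suffices, and this part is easy. Your $k=3$ colouring is exactly the paper's.

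Second, Theorem~\ref{thm:winkler} provides a $(k,k-1)$-system with $H\ordarrow\cG_I^{(k)}(n)$ and $H\ordarrow\rev\cG_I^{(k)}(n)$. It is built by placing independent random copies of the $(k,k-1)$-blow-up $R$ on the lines of a projective plane and union-bounding over orderings.

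Your ``pseudorandomness property for every ordering and every set of local-maximum positions'' stands in for this second step, but it is never formulated, and it is unclear that it could work. A monochromatic copy is determined by the whole tree shape, not just the vertex order. Different branching nodes can therefore sit at the same level, and the projected pair-graph can collapse. You would need every all-left-comb embedding of $H$ to project onto far more than (number of levels)$\cdot\log M$ distinct pairs, and your sketch does not address this. Moreover, a random sparse linear $3$-graph has bounded clique number in its $2$-shadow. So it cannot contain the chain-with-apexes configuration at all, and you would need an entirely different forcing argument, which you have not supplied.

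\textbf{Induction step: it fails as described.} ``No monochromatic copy of the unordered $H_{k-1}$'' is not a hypothesis the projection can use. When a $k$-set is a comb, its projection is the set of levels of consecutive pairs. The level contributed by your added cone vertex therefore depends on where the adversarial embedding places it relative to the other vertices. Distinct vertices or edges may also project to the same levels. There is no reason the projected $(k-1)$-sets form a copy of $H_{k-1}$.

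The paper carries an ordered family through the induction for exactly this reason. The projection of a right-comb chain with left apexes is again a member of $\cF_{I'}^{(k-1)}$ (Proposition~\ref{prop: rightprojection}). The projection of a left-comb chain is a clique (Proposition~\ref{prop: leftcomb}).

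Forcing a long comb inside a monochromatic copy is also not automatic. It depends on three specific choices:
\begin{itemize}
\item a colouring of split types: balanced $\mapsto 0$, $(k-1,1)\mapsto 1$, $(1,k-1)\mapsto 2$;
\item left and right combs coloured $3-c$ and $c$, so that reversal swaps colours $\{0,1\}$ with $\{2,3\}$;
\item a separated special edge $I$, which guarantees a comb of length $\Omega(n/k)$ (Fact~\ref{fact: bigfact} and Cases I.1--II.3).
\end{itemize}
Your ``fixed colour scheme for non-monotone sequences'' leaves all of this open. Finally, the paper does not build $H_k$ from $H_{k-1}$ at all; for each $k$ it constructs $H$ directly by the random-ordering lemma.
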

We remark that the lower bound on Theorem \ref{thm:main} matches the tower-type order of magnitude of the upper bound for the Ramsey number of complete $k$-uniform hypergraphs. We were also informed that the case $k=3$ was obtained independently by Conlon, Fox, and Sudakov in an unpublished work \cite{CFSunp}. 

The proof of Theorem \ref{thm:main} is divided into two parts. For a conveniently chosen $I\in [n]^{(k-1)}$, we construct a family of ordered $k$-graphs on $\Theta(n^c)$ vertices denoted by $\cF^{*}_I(n,k)$ such that: 
\begin{itemize}
    \item for $N \sim t_{k-1}(n^c)$, there exists a 4-coloring of $[N]^{(k)}$ for which no ordered copy $(F,<)\in \cF_I^{*}(n,k)$ is monochromatic.
    \item On the other hand, there exists an (unordered) $(k,k-1)$-system $H$ on $h=n^C$ vertices, such that any ordering of the vertices of $H$ contains an ordered copy of some $(F,<)\in \cF_I^{*}(n,k)$. 
\end{itemize}
This implies that $[N]\narrow (H)_r^k$  and thus $R(H;4)\ge t_{k-1}(h^{c/C})$.

In Section \ref{sec:steppingup}, we describe the family $\cF^{*}_I(n,k)$ and construct the 4-coloring using a variant of the stepping-up lemma due to Erd\H{o}s, Hajnal, and Rado \cite{EHR65}  (see Theorem \ref{thm: steppingup}). In Section \ref{sec:orderings}, we construct the $(k,k-1)$-system $H$ (see Theorem \ref{thm:winkler}). This construction is inspired by a result of \cite{RW89}. In Section \ref{sec:main}, we combine Theorems \ref{thm: steppingup} and \ref{thm:winkler} to prove Theorem \ref{thm:main}. 

\section{A Stepping up lemma}\label{sec:steppingup}
To state Theorem \ref{thm: steppingup}, we will first need to define the family of ordered $k$-graphs $\cF_I^{*}(n,k)$. To do this, we will first define the families of ordered $k$-graphs $\cF_I^{(k)}(n)$ and $\rev \cF_I^{(k)}(n)$. We remark that if our aim would be to prove the case $k=3$ only, it would be sufficient to consider a single ordered $3$-graph (in particular, the one drawn in Figure \ref{fig: F_Ifor3}). However, since the proof proceeds by induction, we need to consider for every $k\ge 3$ a larger family of ordered $k$-graphs as defined below. 
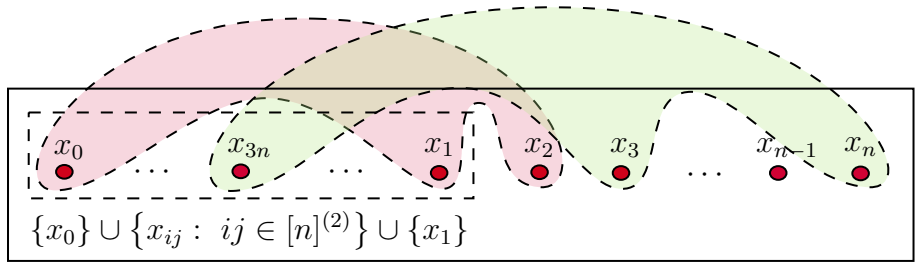
\begin{figure}[b]
    \centering

\tikzset{every picture/.style={line width=0.75pt}} 

\begin{tikzpicture}[x=0.75pt,y=0.75pt,yscale=-0.8,xscale=1]

\draw  [fill={rgb, 255:red, 208; green, 2; blue, 52 }  ,fill opacity=0.16 ][dash pattern={on 4.5pt off 4.5pt}] (265,67) .. controls (385,71) and (405.33,174.33) .. (378,173) .. controls (350.67,171.67) and (362,120) .. (347,120) .. controls (332,120) and (345.8,173.71) .. (329,174) .. controls (312.2,174.29) and (276,116) .. (240,117) .. controls (204,118) and (166,174) .. (136,175) .. controls (106,176) and (145,63) .. (265,67) -- cycle ;
\draw  [fill={rgb, 255:red, 184; green, 233; blue, 134 }  ,fill opacity=0.29 ][dash pattern={on 4.5pt off 4.5pt}] (375,60) .. controls (522.01,54.66) and (575,174) .. (542,173) .. controls (509,172) and (479,113) .. (453,113) .. controls (427,113) and (441,175) .. (418,174) .. controls (395,173) and (376.33,102.33) .. (328.99,111.34) .. controls (281.65,120.35) and (258.33,173.33) .. (224.33,175.33) .. controls (190.33,177.33) and (227.99,65.34) .. (375,60) -- cycle ;
\draw  [fill={rgb, 255:red, 208; green, 2; blue, 27 }  ,fill opacity=1 ] (136,163.31) .. controls (136,160.93) and (137.85,159) .. (140.13,159) .. controls (142.4,159) and (144.25,160.93) .. (144.25,163.31) .. controls (144.25,165.69) and (142.4,167.62) .. (140.13,167.62) .. controls (137.85,167.62) and (136,165.69) .. (136,163.31) -- cycle ;
\draw  [fill={rgb, 255:red, 208; green, 2; blue, 27 }  ,fill opacity=1 ] (323.08,163.98) .. controls (323.08,161.6) and (324.93,159.67) .. (327.2,159.67) .. controls (329.48,159.67) and (331.33,161.6) .. (331.33,163.98) .. controls (331.33,166.36) and (329.48,168.29) .. (327.2,168.29) .. controls (324.93,168.29) and (323.08,166.36) .. (323.08,163.98) -- cycle ;
\draw  [fill={rgb, 255:red, 208; green, 2; blue, 27 }  ,fill opacity=1 ] (373.21,163.57) .. controls (373.21,161.19) and (375.05,159.26) .. (377.33,159.26) .. controls (379.61,159.26) and (381.45,161.19) .. (381.45,163.57) .. controls (381.45,165.95) and (379.61,167.87) .. (377.33,167.87) .. controls (375.05,167.87) and (373.21,165.95) .. (373.21,163.57) -- cycle ;
\draw  [fill={rgb, 255:red, 208; green, 2; blue, 52 }  ,fill opacity=1 ] (533.46,164.31) .. controls (533.46,161.93) and (535.31,160) .. (537.59,160) .. controls (539.86,160) and (541.71,161.93) .. (541.71,164.31) .. controls (541.71,166.69) and (539.86,168.62) .. (537.59,168.62) .. controls (535.31,168.62) and (533.46,166.69) .. (533.46,164.31) -- cycle ;
\draw  [fill={rgb, 255:red, 208; green, 2; blue, 52 }  ,fill opacity=1 ] (492.35,164.03) .. controls (492.35,161.65) and (494.2,159.73) .. (496.48,159.73) .. controls (498.75,159.73) and (500.6,161.65) .. (500.6,164.03) .. controls (500.6,166.41) and (498.75,168.34) .. (496.48,168.34) .. controls (494.2,168.34) and (492.35,166.41) .. (492.35,164.03) -- cycle ;
\draw  [fill={rgb, 255:red, 208; green, 2; blue, 27 }  ,fill opacity=1 ] (413.87,164.03) .. controls (413.87,161.65) and (415.71,159.73) .. (417.99,159.73) .. controls (420.27,159.73) and (422.11,161.65) .. (422.11,164.03) .. controls (422.11,166.41) and (420.27,168.34) .. (417.99,168.34) .. controls (415.71,168.34) and (413.87,166.41) .. (413.87,164.03) -- cycle ;
\draw  [dash pattern={on 4.5pt off 4.5pt}] (122.33,126) -- (344.33,126) -- (344.33,180) -- (122.33,180) -- cycle ;
\draw   (112,111) -- (564,111) -- (564,219.33) -- (112,219.33) -- cycle ;
\draw  [fill={rgb, 255:red, 208; green, 2; blue, 52 }  ,fill opacity=1 ] (224.05,163) .. controls (224.05,160.62) and (225.9,158.69) .. (228.18,158.69) .. controls (230.45,158.69) and (232.3,160.62) .. (232.3,163) .. controls (232.3,165.38) and (230.45,167.31) .. (228.18,167.31) .. controls (225.9,167.31) and (224.05,165.38) .. (224.05,163) -- cycle ;

\draw (133.45,141.61) node [anchor=north west][inner sep=0.75pt]    {$x_{0}$};
\draw (318.27,140.09) node [anchor=north west][inner sep=0.75pt]    {$x_{1}$};
\draw (368.09,140.28) node [anchor=north west][inner sep=0.75pt]    {$x_{2}$};
\draw (527.95,140.02) node [anchor=north west][inner sep=0.75pt]    {$x_{n}$};
\draw (448.97,159.07) node [anchor=north west][inner sep=0.75pt]    {$\cdots $};
\draw (411.01,140.62) node [anchor=north west][inner sep=0.75pt]    {$x_{3}$};
\draw (484.03,140.62) node [anchor=north west][inner sep=0.75pt]    {$x_{n-1}$};
\draw (121.33,186.4) node [anchor=north west][inner sep=0.75pt]    {$\{x_{0}\} \cup \left\{x_{ij} :\ ij\in [ n]^{( 2)}\right\} \cup \{x_{1}\}$};
\draw (220,140.4) node [anchor=north west][inner sep=0.75pt]    {$x_{3n}$};
\draw (172.97,157.07) node [anchor=north west][inner sep=0.75pt]    {$\cdots $};
\draw (269.97,157.07) node [anchor=north west][inner sep=0.75pt]    {$\cdots $};

\end{tikzpicture}
\caption{A member of $\cF^{(3)}_I(n)$ with $I=\{1,2\}$, vertex set $\{x_0\}\cup\left\{x_{ij}:\;ij\in [n]^{( 2)}\right\}\cup\{x_1,\dots,x_n\} $ and edge set $F = \{x_{0},x_1,x_2\}\cup\set{\{x_{ij}, x_i, x_j\}: ij\in [n]^{(2)}}$.}
\label{fig: F_Ifor3}
\end{figure}

\begin{definition}[\label{def:F_I}The families $\cF_I^{(k)}(n)$, $\rev\cF_I^{(k)}(n)$ and $\cF^{*}_I(n,k)$]
 Let $k, n \geq 3$, and $I\in [n]^{(k-1)}$ with $1\in I$.
 \begin{enumerate}
\item  \textbf{$\cF_I^{(k)}(n)$:} An ordered hypergraph $(F,<)$ is an element of $\cF_I^{(k)}(n)$ if it satisfies
\begin{itemize}
    \item $V(F)=\set{x_0,\ldots,x_n}\cup \set{x_J:\;J\in \set{2,\ldots,n}^{(k-1)}\cup\set{I}}$, where for $J_1\ne J_2$ we may have $x_{J_1} = x_{J_2}$ and $x_J$ may be equal to $x_1$ or $x_0$.
    \item The ordering of the vertices satisfies $x_0<x_1<\cdots<x_n$ and $x_0\le x_J \le x_1$ for all $J\in \set{2,\ldots,n}^{(k-1)}$, and $x_I=x_0$,
    \item The edge set is given by $$F=\set{\set{x_J}\cup\set{x_j}_{j\in J}:\; J\in \set{2,\ldots,n}^{(k-1)}\cup\set{I}}.$$
\end{itemize}
We will refer to the edge $\{x_0\}\cup \{x_j\}_{j\in I}$ as the \textit{special edge} and to $\set{x_0,\ldots,x_n}$ as the \textit{distinguished set of vertices}.

 \item \textbf{$\rev \cF_I^{(k)}(n)$:} For each $(F,<)$ we will also consider the ordered hypergraph $(F,<_{\rev})$ where $<_{\rev}$ is the ``reverse'' order, i.e., the order satisfying
    \begin{align*}
        x<y \quad \text{ if and only if }\quad y<_{\rev}x.
    \end{align*}
          Let $\rev \cF_I^{(k)}(n)$ be the family of all reversed ordered hypergraphs \mbox{$(F,<_{\rev})$}. 
        \item  \textbf{$\cF_I^{*}(n,k)$:} Let $\cF_I^{\,*}(n,k)$ denote the family of ordered hypergraphs $(F,<)$ which contain both $(F_1,<)$ and $(F_2, <_{\rev})$  as  ordered subhypergraphs, for some $(F_1,<)$ and $(F_2, <)\in  \cF_I^{(k)}(n)$. 
    \end{enumerate}
\end{definition}
\begin{remark}
Note that while $k$, $I$ and $n$ are fixed for $\cF_{I}^{(k)}(n)$, its members $(F,<)$ are distinguished by
\begin{itemize} 
     \item different equivalence relations on the sets  $J\in\{2,3\dots,n\}^{(k-1)}$,
     \item the assignment of one additional vertex $x_J$ to all $(k-1)$-sets in the equivalence class of $J$, and
     \item the order on $\{x_J:J\in \{2,\dots, n\}^{(k-1)}\cup \{I\}\}.$
\end{itemize}

\end{remark}
The following theorem gives a tower-type lower bound on the Ramsey number of ordered $k$-graphs in the family $\cF_I^{*}(n,k).$ 
\begin{theorem}
\label{thm: steppingup}
   For every $k\geq 3$, there exists a constant $b_k> 0$ and $n_0:=n_0(k)$ such that for every $n\geq n_0$, there exists $I\in [n]^{(k-1)}$ such that  for any $(F,<)\in \cF_I^{\,*}(n,k)$,
   $$[t_{k-1}(n^{b_k})]\narrow (F,<)_4^k.$$
\end{theorem}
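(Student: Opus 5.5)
We will prove the statement by induction on $k$, via an Erd\H{o}s--Hajnal--Rado type stepping-up construction. Since every member of $\cF_I^{*}(n,k)$ contains an ordered copy of some $(F_1,<)\in\cF_I^{(k)}(n)$ and an ordered copy of some $(F_2,<_{\rev})\in\rev\cF_I^{(k)}(n)$, it suffices to build a $4$-coloring $\chi$ of $[N]^{(k)}$, with $N=t_{k-1}(n^{b_k})$, such that no monochromatic ordered copy of the $\cF$-type and the $\rev\cF$-type can occur in the same color. Concretely, we aim for the following:
\begin{itemize}
\item colors $1,2$ contain no ordered copy of any member of $\cF_I^{(k)}(n)$;
\item colors $3,4$ contain no ordered copy of any member of $\rev\cF_I^{(k)}(n)$.
\end{itemize}
Any monochromatic $(F,<)\in\cF_I^{*}(n,k)$ would then contain a forbidden configuration in its own color.

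\textbf{Base case $k=3$.} Take $I=\{1,2\}$ and $N=2^{n^{b_3}}$. Identify $[N]$ with $\{0,1\}^m$, where $m=n^{b_3}$, ordered lexicographically, and for $a<b$ let $\delta(a,b)$ be the first bit in which they differ. For $a_1<a_2<a_3$ set $\delta_i=\delta(a_i,a_{i+1})$, so that $\delta_1\ne\delta_2$. We color according to whether $\delta_1<\delta_2$ or $\delta_1>\delta_2$, refined by an auxiliary $2$-coloring of the pairs $\{\delta_1,\delta_2\}\in[m]^{(2)}$. For this auxiliary coloring we use a random $2$-coloring of $[m]^{(2)}$, chosen so that no large monochromatic ``complete-like'' pattern appears. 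The key combinatorial point is as follows. In a copy of $F\in\cF_I^{(3)}(n)$, all vertices $x_J$ lie in $[x_0,x_1]$, and the distinguished vertices $x_1<\dots<x_n$ produce $\delta$-values $\delta(x_i,x_{i+1})$. Each edge $\{x_J,x_i,x_j\}$ with $x_J\le x_1<x_i<x_j$ has $\delta$-pattern governed by $\delta(x_J,x_i)$ and $\delta(x_i,x_j)$. Monotonicity in one color class forces the sequence $\delta(x_i,x_{i+1})$ to be monotone or to have a forced shape, and then the edges $\{x_J,x_i,x_j\}$ over all pairs $J$ transfer a monochromatic $K_{n-1}$-like structure onto $[m]$ in the auxiliary coloring. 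The special edge $\{x_0,x_1,x_2\}$ is used to pin down the orientation, namely $\delta(x_0,x_1)$ relative to $\delta(x_1,x_2)$. The random coloring forbids this structure once $n\gg\log m$, which yields $m=2^{cn}$. This is in fact stronger than $n^{b_3}$; we only need polynomial losses in total.

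\textbf{Induction step.} We step up from a coloring of $[m]^{(k-1)}$ with $m=t_{k-2}(n^{b_{k-1}})$ to a coloring of $[2^m]^{(k)}$. A $k$-tuple $a_1<\dots<a_k$ yields $\delta_1,\dots,\delta_{k-1}$. If this sequence is monotone, its $\delta$-values are distinct and form a $(k-1)$-set, which we color by the inductive coloring. Otherwise we color by the position of the first local extremum, as in the classical stepping-up lemma; this needs only finitely many colors, which we then merge carefully into $4$. For a copy of $F\in\cF_I^{(k)}(n)$, we pass to a subsequence of the distinguished vertices $x_2<\dots<x_n$ along which $\delta(x_i,x_{i+1})$ is monotone. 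Here Erd\H{o}s--Szekeres costs only a square root, which is where the polynomial loss $n\mapsto n^{c}$ enters. The edges $\{x_J\}\cup\{x_j\}_{j\in J}$ then map to a member of $\cF^{(k-1)}_{I'}(n')$ (or of its reverse) on the $\delta$-values. The projected vertex $x_{J'}$ comes from $\delta(x_J,x_{\min J})$, and it lies before the other $\delta$-values precisely because $x_J\le x_1$. This allows us to invoke the inductive hypothesis. The choice of $I$ must be made so that the special edge projects to a special edge at level $k-1$, which is why $I$ depends on $n$ and the induction.

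\textbf{Main obstacle.} The hardest step is ensuring that the projected structure genuinely lands in $\cF^{(k-1)}_{I'}(n')$ with the correct ordering, including the identifications $x_{J_1}=x_{J_2}$ and $x_J\in\{x_0,x_1\}$, and keeping the number of colors at $4$ rather than letting it grow with $k$. I would first try to show that non-monotone $\delta$-patterns are impossible on a long monotone run of the distinguished vertices. The auxiliary vertices $x_J$ all lie below $x_1$, so in each edge only the first $\delta$ can be ``wild''. This should reduce the coloring to two base colors, each split by one bit indicating order versus reverse order.
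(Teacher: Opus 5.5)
Your global framework agrees with the paper's: one group of colors avoids $\cF_I^{(k)}(n)$ and the other avoids $\rev\cF_I^{(k)}(n)$, combs are colored through the projection, and the base case uses a $2$-coloring of $[m]^{(2)}$ with no monochromatic $K_{n-1}$. Your base case matches the paper's, provided the $\cF$-colors are the ones given to \emph{decreasing} $\delta$-sequences (left combs). With the opposite orientation, take a right comb $x_0<\dots<x_n$ and set all $x_J=x_0$; this copy of $F$ projects to a star with centre $\delta(x_0,x_1)$ and leaves $\delta(x_i,x_{i+1})$, and every $2$-coloring contains such a star monochromatically.

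The induction step, however, has a genuine gap. You cannot ``pass to a subsequence of $x_2<\dots<x_n$ along which $\delta(x_i,x_{i+1})$ is monotone'' at the cost of a square root. For a subsequence, the relevant values are $\delta(x_{i_j},x_{i_{j+1}})=\min_{i_j\le l<i_{j+1}}\delta(x_l,x_{l+1})$, not the values Erd\H{o}s--Szekeres selected. In fact a set of $n$ leaves need not contain a comb on more than about $\log_2 n+1$ leaves. If its branching points $a(x,y)$ form a balanced binary tree (e.g.\ the leaves $1,\dots,2^d$), each leaf has only about $\log_2 n$ of them as ancestors. But the $t-1$ branching points of a comb on $t$ leaves are all ancestors of its first leaf (left comb) or its last leaf (right comb). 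A logarithmic loss per step costs a level of the tower. So long combs cannot be extracted from the distinguished vertices by a Ramsey argument; they must be \emph{forced} by the monochromatic edges $\{x_J\}\cup\{x_j\}_{j\in J}$. Your plan to show that non-monotone patterns are impossible ``on a long monotone run'' presupposes exactly the missing object.

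This forcing is what the paper's concrete $4$-coloring (\ref{eqn: coloring4}) is built for, and your ``first local extremum, merged carefully into $4$ colors'' leaves it unspecified. In the paper, a non-comb is colored by the position of the \emph{minimum} $\delta$, i.e.\ by its split type: balanced $\mapsto 0$, $(k-1,1)\mapsto 1$, $(1,k-1)\mapsto 2$. Left combs get $3-c(\pi(X))$ and right combs get $c(\pi(X))$.

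\begin{itemize}
\item Because colors $0,1$ contain no $(1,k-1)$-split, Fact~\ref{fact: bigfact} upgrades a single right comb $\{y_1,y_2,y_\ell\}$ to a right comb on the whole interval $\{y_0,y_2,\dots,y_\ell\}$.
\item Excluding $(k-1,1)$-splits (color $0$) or balanced splits (color $1$) similarly forces whole intervals to be left combs.
\item Which interval is used depends on the shape of the special edge $X^I$. The set $I=\{1,2,a_3,\dots,a_{k-1}\}$ is taken separated (Definition~\ref{def: separated}) so that every such interval has length at least $t=n/8k$. The loss per step is therefore linear, not a square root.
\end{itemize}

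Right combs then project onto a member of $\cF^{(k-1)}_{I'}(t)$ (Proposition~\ref{prop: rightprojection}, where condition (c) produces the special edge). Left combs project onto a monochromatic clique (Proposition~\ref{prop: leftcomb}), which contains members of both $\cF^{(k-1)}_{I'}(t)$ and $\rev\cF^{(k-1)}_{I'}(t)$. Note that for decreasing $\delta$'s, $\delta(x_J,x_{\min J})=\delta(x_1,x_{\min J})$ is already the \emph{largest} value of the projected edge. So no new bottom vertex appears, contrary to your description. Finally, the reversal $i\mapsto 2^N+1-i$ must send colors $2,3$ to $1,0$, which is why $(k-1,1)$- and $(1,k-1)$-splits sit in opposite groups. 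Without a concrete rule for non-combs and the corresponding forcing lemmas, your induction step is not established.
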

\subsection{Preliminaries}\label{subsec:prelim}
The proof of Theorem \ref{thm: steppingup} is based on a variant of the negative stepping up lemma due to Erd\H{o}s, Hajnal and Rado \cite{EHR65}. First we introduce binary trees and sequences which will be used to define the colorings used in Theorem \ref{thm: steppingup}.
\subsubsection{Binary trees, ancestors and descendants} 
\label{subsubsec: TreesBinarySequences}
Given an integer $N$, let $T = T(N)$ be the  binary tree with height $N$ and ordered set of leaves $[2^N]$. Let $V(T)$ denote the $2^{N+1}-1$ vertices of $T$. We also identify the levels of the tree with $[N+1]$, where the root is at level 1 and the leaves are at level $N+1$. 

Given a vertex $u\in V(T)$, let $\pi(u)$ denote the level of the vertex $u$ in the tree. Given vertices $u,v\in V(T)$ with $\pi(u)< \pi(v)$, we say that $u$ is an \textit{ancestor} of $v$ if the path from $u$ to $v$ has at most one vertex in each level. Given any pair of leaves $\{x,y\}\subseteq [2^N]$, let the \textit{greatest common ancestor} of $x$ and $y$, denoted by $a(x,y)$, be the vertex of $T$ of the highest level which is an ancestor of both $x$ and $y$. Further, let $\delta(x,y) = \pi (a(x,y)).$

Given a pair of vertices $u,v\in V(T)$ where $u$ is an ancestor of $v$, we say that $v$ is a \textit{left descendant} of $u$, or $v\in L(u)$, if the unique path from $u$ to $v$ in $T$ passes through the \textit{left child} of $u$ in $T$. Similarly, we say that $v$ is a \textit{right descendant} of $u$, or $v\in R(u)$, if the unique path from $u$ to $v$ in $T$ passes through the \textit{right child} of $u$ in $T$. 
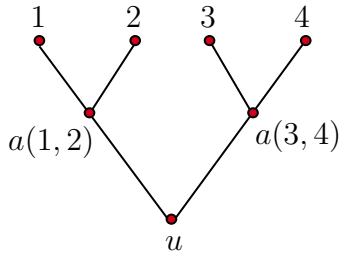
\begin{figure}[h!]
    \centering

\tikzset{every picture/.style={line width=0.75pt}} 

\begin{tikzpicture}[x=0.75pt,y=0.75pt,yscale=-0.7,xscale=0.7]

\draw    (345.71,109.65) -- (374.71,159.57) ;
\draw    (224.4,113.79) -- (318.53,241.26) ;
\draw    (414.32,109.49) -- (318.53,241.26) ;
\draw    (292.71,111.43) -- (260.12,162.24) ;
\draw  [fill={rgb, 255:red, 208; green, 2; blue, 27 }  ,fill opacity=1 ] (315.1,237.98) .. controls (315.1,236.17) and (316.63,234.7) .. (318.53,234.7) .. controls (320.42,234.7) and (321.96,236.17) .. (321.96,237.98) .. controls (321.96,239.79) and (320.42,241.26) .. (318.53,241.26) .. controls (316.63,241.26) and (315.1,239.79) .. (315.1,237.98) -- cycle ;
\draw  [fill={rgb, 255:red, 208; green, 2; blue, 27 }  ,fill opacity=1 ] (410.89,109.65) .. controls (410.89,107.83) and (412.43,106.36) .. (414.32,106.36) .. controls (416.21,106.36) and (417.75,107.83) .. (417.75,109.65) .. controls (417.75,111.46) and (416.21,112.93) .. (414.32,112.93) .. controls (412.43,112.93) and (410.89,111.46) .. (410.89,109.65) -- cycle ;
\draw  [fill={rgb, 255:red, 208; green, 2; blue, 27 }  ,fill opacity=1 ] (289.29,109.65) .. controls (289.29,107.83) and (290.82,106.36) .. (292.71,106.36) .. controls (294.61,106.36) and (296.14,107.83) .. (296.14,109.65) .. controls (296.14,111.46) and (294.61,112.93) .. (292.71,112.93) .. controls (290.82,112.93) and (289.29,111.46) .. (289.29,109.65) -- cycle ;
\draw  [fill={rgb, 255:red, 208; green, 2; blue, 27 }  ,fill opacity=1 ] (220.97,109.65) .. controls (220.97,107.83) and (222.5,106.36) .. (224.4,106.36) .. controls (226.29,106.36) and (227.82,107.83) .. (227.82,109.65) .. controls (227.82,111.46) and (226.29,112.93) .. (224.4,112.93) .. controls (222.5,112.93) and (220.97,111.46) .. (220.97,109.65) -- cycle ;
\draw  [fill={rgb, 255:red, 208; green, 2; blue, 27 }  ,fill opacity=1 ] (256.69,161.55) .. controls (256.69,159.73) and (258.22,158.26) .. (260.12,158.26) .. controls (262.01,158.26) and (263.54,159.73) .. (263.54,161.55) .. controls (263.54,163.36) and (262.01,164.83) .. (260.12,164.83) .. controls (258.22,164.83) and (256.69,163.36) .. (256.69,161.55) -- cycle ;
\draw  [fill={rgb, 255:red, 208; green, 2; blue, 27 }  ,fill opacity=1 ] (342.29,109.65) .. controls (342.29,107.83) and (343.82,106.36) .. (345.71,106.36) .. controls (347.61,106.36) and (349.14,107.83) .. (349.14,109.65) .. controls (349.14,111.46) and (347.61,112.93) .. (345.71,112.93) .. controls (343.82,112.93) and (342.29,111.46) .. (342.29,109.65) -- cycle ;
\draw  [fill={rgb, 255:red, 208; green, 2; blue, 27 }  ,fill opacity=1 ] (373.29,161.55) .. controls (373.29,159.73) and (374.82,158.26) .. (376.71,158.26) .. controls (378.61,158.26) and (380.14,159.73) .. (380.14,161.55) .. controls (380.14,163.36) and (378.61,164.83) .. (376.71,164.83) .. controls (374.82,164.83) and (373.29,163.36) .. (373.29,161.55) -- cycle ;

\draw (216,83) node [anchor=north west][inner sep=0.75pt]    {$1$};
\draw (404,83) node [anchor=north west][inner sep=0.75pt]    {$4$};
\draw (284,83) node [anchor=north west][inner sep=0.75pt]    {$2$};
\draw (337,83) node [anchor=north west][inner sep=0.75pt]    {$3$};
\draw (312,248.05) node [anchor=north west][inner sep=0.75pt]    {$u$};
\draw (200,168.8) node [anchor=north west][inner sep=0.75pt]    {$a(1,2)$};
\draw (376,165) node [anchor=north west][inner sep=0.75pt]    {$a( 3,4)$};

\end{tikzpicture}
    \caption{In this example where $N = 2$, the leaves 1 and 2 are \textit{left descendants of $u$} and $a(1,2)$ is the  \textit{left child} of $u$.}
    \label{fig: LeftRightChild}
\end{figure}

We remark that given a pair of leafs $x,y\in [2^N]$ with $x<y$, $u=a(x,y)$ is the unique vertex in $V(T)$ such that $x\in L(u)$ and $y\in R(u).$ Given a subset of the leaves $X\subseteq [2^N]$ of size at least 2, let $u_X:= a (\min X, \max X)$. Let the set of \textit{left descendants} of $u_X$ in $X$ be $X_L(u_X):= X\cap L(u_X).$ Similarly, let the set of \textit{right descendants} of $u_X$ in $X$ be $X_R(u_X):=X\cap R(u_X).$ We note that $\min X\in X_L(u_X)$ and $\max X\in X_R(u_X)$ and so each of them is non-empty, and further
\begin{align}
\label{eqn: XLXR}
    \max X_L(u_X) < \min X_R(u_X) \text{ and } X = X_L(u_X)\dcup X_R(u_X).
\end{align}

\subsubsection{Shapes of leaf sets}

Given a positive integer $t\geq 3$, we will divide subsets $X  \subseteq  [2^N]$ of size $t$ into two groups -- \textit{combs} and \textit{splits}. These will eventually be used to define the coloring of the $k$-subsets of $[2^N]$ used for Theorem \ref{thm: steppingup}. First, we consider combs, which were also used in \cite{EHR65}, and correspond to sets $X = \{x_1< \cdots < x_t\}$ where the $\delta(x_i,x_{i+1})$ form a monotone sequence.
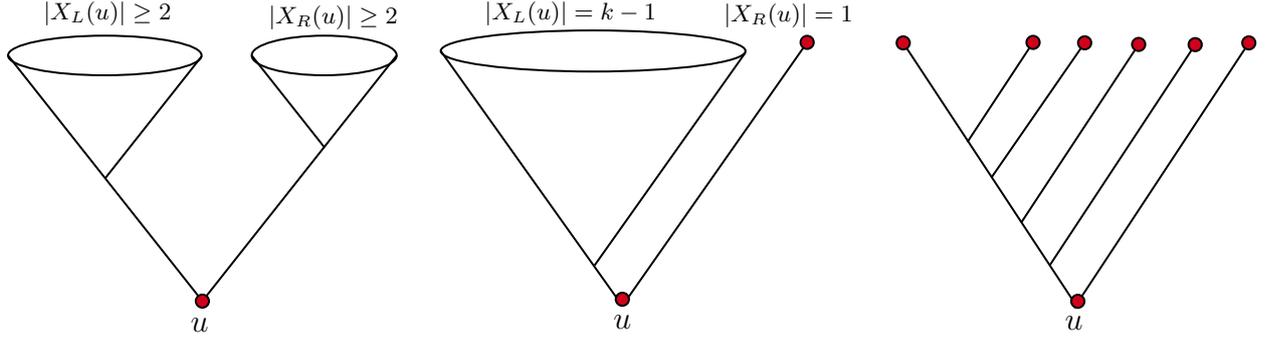
\begin{figure}[t]
    \centering
\tikzset{every picture/.style={line width=0.75pt}} 

\tikzset{every picture/.style={line width=0.75pt}} 

\begin{tikzpicture}[x=0.75pt,y=0.75pt,yscale=-1,xscale=1]

\draw    (605.58,109.8) -- (533.19,220.49) ;
\draw    (13,115.24) -- (110,238.8) ;
\draw    (207,115.24) -- (110,238.8) ;
\draw   (13,115.24) .. controls (13,109.74) and (34.63,105.28) .. (61.3,105.28) .. controls (87.98,105.28) and (109.61,109.74) .. (109.61,115.24) .. controls (109.61,120.75) and (87.98,125.21) .. (61.3,125.21) .. controls (34.63,125.21) and (13,120.75) .. (13,115.24) -- cycle ;
\draw   (134.64,115.24) .. controls (134.64,109.74) and (150.84,105.28) .. (170.82,105.28) .. controls (190.8,105.28) and (207,109.74) .. (207,115.24) .. controls (207,120.75) and (190.8,125.21) .. (170.82,125.21) .. controls (150.84,125.21) and (134.64,120.75) .. (134.64,115.24) -- cycle ;
\draw    (134.64,115.24) -- (171.02,161.58) ;
\draw    (109.61,115.74) -- (61.5,177.02) ;
\draw    (229,112.98) -- (319.65,241.17) ;
\draw    (411.89,108.66) -- (319.65,241.17) ;
\draw   (229,112.98) .. controls (229,107.27) and (263.05,102.64) .. (305.05,102.64) .. controls (347.05,102.64) and (381.1,107.27) .. (381.1,112.98) .. controls (381.1,118.69) and (347.05,123.32) .. (305.05,123.32) .. controls (263.05,123.32) and (229,118.69) .. (229,112.98) -- cycle ;
\draw    (381.1,113.76) -- (305.77,220.83) ;
\draw    (459.84,108.92) -- (547.01,242.2) ;
\draw    (632.36,108.92) -- (545.19,242.2) ;
\draw    (577.36,109.69) -- (518.79,199.24) ;
\draw    (550.42,108.89) -- (504.01,176.45) ;
\draw    (524.66,108.67) -- (491.84,158.85) ;
\draw  [fill={rgb, 255:red, 208; green, 2; blue, 27 }  ,fill opacity=1 ] (106.7,238.8) .. controls (106.7,236.98) and (108.18,235.5) .. (110,235.5) .. controls (111.82,235.5) and (113.3,236.98) .. (113.3,238.8) .. controls (113.3,240.62) and (111.82,242.1) .. (110,242.1) .. controls (108.18,242.1) and (106.7,240.62) .. (106.7,238.8) -- cycle ;
\draw  [fill={rgb, 255:red, 208; green, 2; blue, 27 }  ,fill opacity=1 ] (316.35,237.87) .. controls (316.35,236.04) and (317.82,234.56) .. (319.65,234.56) .. controls (321.47,234.56) and (322.95,236.04) .. (322.95,237.87) .. controls (322.95,239.69) and (321.47,241.17) .. (319.65,241.17) .. controls (317.82,241.17) and (316.35,239.69) .. (316.35,237.87) -- cycle ;
\draw  [fill={rgb, 255:red, 208; green, 2; blue, 27 }  ,fill opacity=1 ] (408.59,108.66) .. controls (408.59,106.83) and (410.07,105.35) .. (411.89,105.35) .. controls (413.72,105.35) and (415.19,106.83) .. (415.19,108.66) .. controls (415.19,110.48) and (413.72,111.96) .. (411.89,111.96) .. controls (410.07,111.96) and (408.59,110.48) .. (408.59,108.66) -- cycle ;
\draw  [fill={rgb, 255:red, 208; green, 2; blue, 27 }  ,fill opacity=1 ] (543.71,238.9) .. controls (543.71,237.08) and (545.19,235.6) .. (547.01,235.6) .. controls (548.84,235.6) and (550.31,237.08) .. (550.31,238.9) .. controls (550.31,240.72) and (548.84,242.2) .. (547.01,242.2) .. controls (545.19,242.2) and (543.71,240.72) .. (543.71,238.9) -- cycle ;
\draw  [fill={rgb, 255:red, 208; green, 2; blue, 27 }  ,fill opacity=1 ] (456.54,108.92) .. controls (456.54,107.1) and (458.02,105.62) .. (459.84,105.62) .. controls (461.67,105.62) and (463.15,107.1) .. (463.15,108.92) .. controls (463.15,110.74) and (461.67,112.22) .. (459.84,112.22) .. controls (458.02,112.22) and (456.54,110.74) .. (456.54,108.92) -- cycle ;
\draw  [fill={rgb, 255:red, 208; green, 2; blue, 27 }  ,fill opacity=1 ] (521.36,108.67) .. controls (521.36,106.84) and (522.84,105.37) .. (524.66,105.37) .. controls (526.49,105.37) and (527.97,106.84) .. (527.97,108.67) .. controls (527.97,110.49) and (526.49,111.97) .. (524.66,111.97) .. controls (522.84,111.97) and (521.36,110.49) .. (521.36,108.67) -- cycle ;
\draw  [fill={rgb, 255:red, 208; green, 2; blue, 27 }  ,fill opacity=1 ] (547.12,108.89) .. controls (547.12,107.07) and (548.6,105.59) .. (550.42,105.59) .. controls (552.24,105.59) and (553.72,107.07) .. (553.72,108.89) .. controls (553.72,110.71) and (552.24,112.19) .. (550.42,112.19) .. controls (548.6,112.19) and (547.12,110.71) .. (547.12,108.89) -- cycle ;
\draw  [fill={rgb, 255:red, 208; green, 2; blue, 27 }  ,fill opacity=1 ] (574.06,109.69) .. controls (574.06,107.87) and (575.54,106.39) .. (577.36,106.39) .. controls (579.18,106.39) and (580.66,107.87) .. (580.66,109.69) .. controls (580.66,111.51) and (579.18,112.99) .. (577.36,112.99) .. controls (575.54,112.99) and (574.06,111.51) .. (574.06,109.69) -- cycle ;
\draw  [fill={rgb, 255:red, 208; green, 2; blue, 27 }  ,fill opacity=1 ] (602.28,109.8) .. controls (602.28,107.97) and (603.76,106.49) .. (605.58,106.49) .. controls (607.41,106.49) and (608.89,107.97) .. (608.89,109.8) .. controls (608.89,111.62) and (607.41,113.1) .. (605.58,113.1) .. controls (603.76,113.1) and (602.28,111.62) .. (602.28,109.8) -- cycle ;
\draw  [fill={rgb, 255:red, 208; green, 2; blue, 27 }  ,fill opacity=1 ] (629.06,108.92) .. controls (629.06,107.1) and (630.54,105.62) .. (632.36,105.62) .. controls (634.19,105.62) and (635.66,107.1) .. (635.66,108.92) .. controls (635.66,110.74) and (634.19,112.22) .. (632.36,112.22) .. controls (630.54,112.22) and (629.06,110.74) .. (629.06,108.92) -- cycle ;

\draw (29.24,86.83) node [anchor=north west][inner sep=0.75pt]  [font=\scriptsize]  {$|X_{L}( u) |\geq 2$};
\draw (141.57,88.83) node [anchor=north west][inner sep=0.75pt]  [font=\scriptsize]  {$|X_{R}( u) |\geq 2$};
\draw (102.7,245.5) node [anchor=north west][inner sep=0.75pt]    {$u$};
\draw (249.58,86.62) node [anchor=north west][inner sep=0.75pt]  [font=\scriptsize]  {$|X_{L}( u) | = k-1$};
\draw (368.92,87.62) node [anchor=north west][inner sep=0.75pt]  [font=\scriptsize]  {$|X_{R}( u) | = 1$};
\draw (313.65,244.42) node [anchor=north west][inner sep=0.75pt]    {$u$};
\draw (539.21,244.9) node [anchor=north west][inner sep=0.75pt]    {$u$};

\end{tikzpicture}
    \caption{A balanced split,  $(k-1,1)$ --  split and left comb.}
    \label{fig: shapes}
\end{figure}
\begin{definition}[Combs]
\label{def: comb}
     Let $X = \{x_1<\cdots < x_t\}\subseteq [2^N]$  and let $u = u_X$.  
     \begin{enumerate}
    \item\label{it: leftcomb} \textbf{Left comb:} We say that a set $X$ forms a \textit{left comb}, if 
    $$\d(x_{1},x_{2})> \delta(x_2,x_3)> \cdots > \delta(x_{t-1},x_t)$$ 
    \item \textbf{Right comb:} We say that a set $X$ forms a \textit{right comb}, if 
    $$\d(x_{1},x_{2})< \delta(x_2,x_3)< \cdots < \delta(x_{t-1},x_t)$$ 
    \end{enumerate}
\end{definition}
Given a set $X = \{x_1<\cdots < x_t\}$, let 
\begin{align*}
    a(X) &:= \{a(x_{i-1}, x_i): 2\leq i \leq t\},\\
    \pi(X)&:= \pi(a(X)) =  \{\delta(x_{i-1},x_{i}): 2 \leq i \leq t\}.
\end{align*}
Next, we define splits. 
\begin{definition}[Splits]
    Let $X = \{x_1<\cdots < x_t\}\subseteq [2^N]$, $u = u_X$ and let $\ell$, $r$ be positive integers.  We say that a set $X$ forms an \textit{$(\ell,r)$-split} if it is not a \textit{left or right comb} and
    $$|X_L(u)| = \ell \text{ and } |X_R(u)| = r. $$
    If $X$ forms an $(\ell,r)$-split with both $\ell, r\ge 2$, then we say that $X$ forms a \textit{balanced split}. 
\end{definition}
Next, we mention some properties of combs which will be used in the proof of Theorem \ref{thm: steppingup}. First we note that for $X = \{x< y< z\}$,   either  $X$ forms a \textit{left comb} or a \textit{right comb} (see Figure \ref{fig:3combs}). Note that $X$ forming a left comb is equivalent to  $\delta(x,y) > \delta(y,z)$ and $\delta(x,z)= \delta(y,z)$,  and similarly, forming a right comb is equivalent to $\delta(x,z) = \delta(x,y) < \delta(y,z)$. It will also be convenient to use that for a left comb 
$$|X_L(u_X)| = 2 \text{ and } |X_R(u_X)| = 1, \quad \text{ while }  \quad |X_L(u_X)| = 1 \text{ and } |X_R(u_X)| = 2$$
for a right comb. Further, for $\{x< y < z\}$ we always have $\delta(x,y)\neq \delta(y,z)$ and
$$\delta(x,z) = \min \{\delta(x,y), \delta(y,z)\}.$$
Similarly, for any set $X = \{x_1< \cdots  < x_t\}\subseteq [2^N]$, one can observe that 
\begin{align}
\label{eqn: mindelta}
    \delta(x_1,x_t) = \min\{\delta(x_{i-1},x_t): 2\le  i \le t\}.
\end{align}

\begin{figure}[h]
    \centering
\tikzset{every picture/.style={line width=0.75pt}} 

\begin{tikzpicture}[x=0.75pt,y=0.75pt,yscale=-0.8,xscale=0.8]

\draw    (355.99,92.1) -- (456,220.8) ;
\draw    (552.99,93.1) -- (456,220.8) ;
\draw    (461.28,90.03) -- (509.8,150.95) ;
\draw    (88.4,95.79) -- (182.53,223.26) ;
\draw    (278.32,91.49) -- (182.53,223.26) ;
\draw    (193.27,91.94) -- (141.12,167.24) ;
\draw  [fill={rgb, 255:red, 208; green, 2; blue, 27 }  ,fill opacity=1 ] (452.7,220.8) .. controls (452.7,218.98) and (454.18,217.5) .. (456,217.5) .. controls (457.82,217.5) and (459.3,218.98) .. (459.3,220.8) .. controls (459.3,222.62) and (457.82,224.1) .. (456,224.1) .. controls (454.18,224.1) and (452.7,222.62) .. (452.7,220.8) -- cycle ;
\draw  [fill={rgb, 255:red, 208; green, 2; blue, 27 }  ,fill opacity=1 ] (179.1,219.98) .. controls (179.1,218.17) and (180.63,216.7) .. (182.53,216.7) .. controls (184.42,216.7) and (185.96,218.17) .. (185.96,219.98) .. controls (185.96,221.79) and (184.42,223.26) .. (182.53,223.26) .. controls (180.63,223.26) and (179.1,221.79) .. (179.1,219.98) -- cycle ;
\draw  [fill={rgb, 255:red, 208; green, 2; blue, 27 }  ,fill opacity=1 ] (274.89,91.49) .. controls (274.89,89.68) and (276.43,88.21) .. (278.32,88.21) .. controls (280.21,88.21) and (281.75,89.68) .. (281.75,91.49) .. controls (281.75,93.3) and (280.21,94.77) .. (278.32,94.77) .. controls (276.43,94.77) and (274.89,93.3) .. (274.89,91.49) -- cycle ;
\draw  [fill={rgb, 255:red, 208; green, 2; blue, 27 }  ,fill opacity=1 ] (189.84,91.94) .. controls (189.84,90.12) and (191.37,88.65) .. (193.27,88.65) .. controls (195.16,88.65) and (196.7,90.12) .. (196.7,91.94) .. controls (196.7,93.75) and (195.16,95.22) .. (193.27,95.22) .. controls (191.37,95.22) and (189.84,93.75) .. (189.84,91.94) -- cycle ;
\draw  [fill={rgb, 255:red, 208; green, 2; blue, 27 }  ,fill opacity=1 ] (84.97,92.51) .. controls (84.97,90.7) and (86.5,89.23) .. (88.4,89.23) .. controls (90.29,89.23) and (91.82,90.7) .. (91.82,92.51) .. controls (91.82,94.32) and (90.29,95.79) .. (88.4,95.79) .. controls (86.5,95.79) and (84.97,94.32) .. (84.97,92.51) -- cycle ;
\draw  [fill={rgb, 255:red, 208; green, 2; blue, 27 }  ,fill opacity=1 ] (352.69,92.1) .. controls (352.69,90.28) and (354.17,88.8) .. (355.99,88.8) .. controls (357.81,88.8) and (359.29,90.28) .. (359.29,92.1) .. controls (359.29,93.93) and (357.81,95.41) .. (355.99,95.41) .. controls (354.17,95.41) and (352.69,93.93) .. (352.69,92.1) -- cycle ;
\draw  [fill={rgb, 255:red, 208; green, 2; blue, 27 }  ,fill opacity=1 ] (457.97,90.03) .. controls (457.97,88.21) and (459.45,86.73) .. (461.28,86.73) .. controls (463.1,86.73) and (464.58,88.21) .. (464.58,90.03) .. controls (464.58,91.86) and (463.1,93.34) .. (461.28,93.34) .. controls (459.45,93.34) and (457.97,91.86) .. (457.97,90.03) -- cycle ;
\draw  [fill={rgb, 255:red, 208; green, 2; blue, 27 }  ,fill opacity=1 ] (551.69,89.8) .. controls (551.69,87.98) and (553.17,86.5) .. (554.99,86.5) .. controls (556.81,86.5) and (558.29,87.98) .. (558.29,89.8) .. controls (558.29,91.63) and (556.81,93.1) .. (554.99,93.1) .. controls (553.17,93.1) and (551.69,91.63) .. (551.69,89.8) -- cycle ;
\draw  [fill={rgb, 255:red, 208; green, 2; blue, 27 }  ,fill opacity=1 ] (506.49,150.95) .. controls (506.49,149.13) and (507.97,147.65) .. (509.8,147.65) .. controls (511.62,147.65) and (513.1,149.13) .. (513.1,150.95) .. controls (513.1,152.78) and (511.62,154.25) .. (509.8,154.25) .. controls (507.97,154.25) and (506.49,152.78) .. (506.49,150.95) -- cycle ;
\draw  [fill={rgb, 255:red, 208; green, 2; blue, 27 }  ,fill opacity=1 ] (137.69,167.24) .. controls (137.69,165.42) and (139.22,163.95) .. (141.12,163.95) .. controls (143.01,163.95) and (144.54,165.42) .. (144.54,167.24) .. controls (144.54,169.05) and (143.01,170.52) .. (141.12,170.52) .. controls (139.22,170.52) and (137.69,169.05) .. (137.69,167.24) -- cycle ;

\draw (397.7,225.5) node [anchor=north west][inner sep=0.75pt]    {$u_X = a( x,y)  = a( x,z)$};
\draw (347.7,65.5) node [anchor=north west][inner sep=0.75pt]    {$x$};
\draw (453.7,64.5) node [anchor=north west][inner sep=0.75pt]    {$y$};
\draw (548.7,65.5) node [anchor=north west][inner sep=0.75pt]    {$z$};
\draw (82.97,68.45) node [anchor=north west][inner sep=0.75pt]    {$x$};
\draw (189.04,68.45) node [anchor=north west][inner sep=0.75pt]    {$y$};
\draw (279.65,68.43) node [anchor=north west][inner sep=0.75pt]    {$z$};
\draw (123.7,224.5) node [anchor=north west][inner sep=0.75pt]    {$u_X = a( x,z)  = a( y,z)$};
\draw (515.49,153.35) node [anchor=north west][inner sep=0.75pt]    {$a( y,z)$};
\draw (74.49,158.35) node [anchor=north west][inner sep=0.75pt]    {$a( x,y)$};
\end{tikzpicture}
    \caption{Any set $X= \{x< y< z\}$ either forms a left comb (on the left) or a right comb (on the right).}
    \label{fig:3combs}
\end{figure}
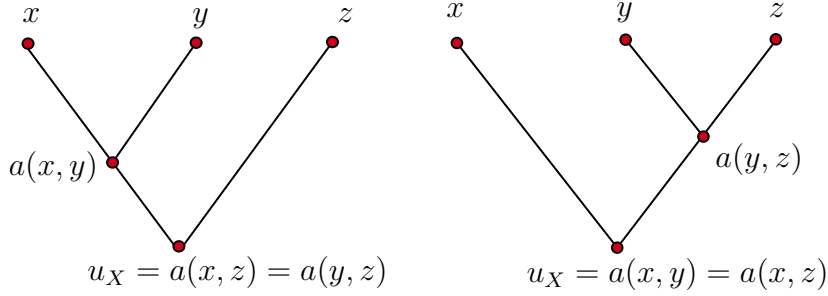
\begin{remark}
   \label{fact: combstructure}
   For larger combs, the following properties will be used later. Let $t\geq 3$ be a positive integer and let $X=\{x_1 < \cdots < x_t\} \subseteq [2^N].$
\begin{enumerate}
    \item  If $X$ forms a left comb, then
    \begin{enumerate}[label = (\roman*)]
    \item \label{it: comb1L} For any $2\le i < j \le t$, we have  $a(x_{j-1}, x_j) = a(x_i, x_j).$ 
    \item\label{it: comb2L} For any  $J = \{i_1<i_2<\cdots < i_{k}\}\subseteq [t]$, the subset $\{x_j:j\in J\}$ also forms a left comb and 
    $$\pi(\{x_j:j\in J\}) = \{\delta(x_{i_{j-1}}, x_{i_{j}})=\delta(x_{i_j-1},x_{i_j}):j\in \{2,\dots,k\}\}.$$  
\end{enumerate}
\item  \label{it: Comb2} Similarly if $X$ forms a right comb, 
    \begin{enumerate}[label = (\roman*)]
     \item\label{it: comb1R} For any $1\le i < j \le t-1$, we have   $a(x_{i}, x_{i+1}) = a(x_i, x_j)$. 
    \item\label{it: comb2R} For any  $J = \{i_1<i_2<\cdots < i_{k}\}\subseteq [t]$, the subset $ \{x_j:j\in J\}$  is also a right comb and 
    $$\pi(\{x_j:j\in J\}) = \{\delta(x_{i_j}, x_{i_{j+1}}) = \delta(x_{i_j}, x_{i_j + 1}:j\in \{1,\dots,k-1\}\}.$$ 
\end{enumerate}
\end{enumerate}
\end{remark}

\subsubsection{Coloring rules}
Here, we define the coloring rules used for coloring subsets of size $k$ of the leaf set  $[2^N]$, given a coloring of $[N]^{(k-1)}$. These will be the colorings used for the proof of Theorem \ref{thm: steppingup}. First, we describe the case $k = 3$. We remark that such a coloring was also used to give a double exponential  lower bound for $R(K_n^{(3)}; 4)$ (see \cite{GRSBook}). \\[0.3cm]
\textbf{Coloring rule for $k = 3$:} Given a positive integer $N$ and a coloring $c: [N]^{(2)}\to \{0,1\}$, let 
    $\chi_c: [2^N]^{(3)}\to \ZZ_4$ be given by: 
    \begin{align}
    \label{eqn: coloring3}
        \chi_c(X) = \begin{cases}
            c(\pi(X)) &\quad\text{if $X$ is a left comb}\\
            3- c(\pi(X))&\quad \text{if $X$ is a right comb.}
        \end{cases}
    \end{align}
For $k\geq 4$, we color \textit{left} and \textit{right} combs depending on the color of the projection, while in the case when $X$ does not form a comb, we color by the type of $(\ell,r)$-split that it forms. \\[0.3cm]
\textbf{Coloring rule for $k\geq 4$: }
    Given a positive integer $N$ and a coloring $c: [N]^{(k-1)}\to \ZZ_4$, let $\chi_c: [2^N]^{(k)} \to \ZZ_4$ be given by: 
     \begin{align}
     \label{eqn: coloring4}
        \chi_c(X) = \begin{cases}
            3- c(\pi(X)) &\quad\text{if $X$ forms a left comb}\\
            c(\pi(X))&\quad \text{if $X$ forms a right comb}\\
            0 &\quad \text{if $X$ forms a balanced split}\\
            1 &\quad \text{if $X$ forms a $(k-1,1)$-split}\\
            2 &\quad \text{if $X$ forms a $(1,k-1)$-split}.
        \end{cases}
    \end{align}
Given a coloring $\chi: [N]^{(k)} \to [r]$, we say that \textit{$\chi$ contains a monochromatic copy of $(F,<)$} if there exists some $i\in [r]$ such that there is a copy of $(F,<)$ in the ordered $k$-graph with vertex set $[N]$ and edge set $\chi^{-1}(i)$. 
\subsection{Stepping up ordered $(k,k-1)$-systems}\label{subsec:orderedstepup} In this section we prove Theorem \ref{thm: steppingup}, using a variant of the stepping up lemma introduced in \cite{EHR65}. 
The proof of Theorem \ref{thm: steppingup} is by induction. For the base case $k = 3$, we consider a 2-coloring of the projection $c:[N]^{(2)}\to \{0,1\}$ which contains no monochromatic clique $K_{n-1}$, and let $\chi_c:[2^N]^{(3)}\to \ZZ_4$ be as defined in (\ref{eqn: coloring3}). In Proposition \ref{prop: basecase}, we show that $\chi_c$ does not contain any member of 
$ \cF_I^{(k)}(n)$ in colors $\{0,1\}$ or $\rev \cF_I^{(k)}(n)$ in colors $\{2,3\}$ where $I= \{1,2\}$. 

For $k\ge 4$, let $n$, $t=n/8k$ be integers\footnote{Note that although $t=n/8k$ might not be an integer, we prefer to write in this way, since it simplifies the exposition and has no significant effect on the arguments.} and let $I'\in [t]^{(k-2)}$, $I\in [n]^{(k-1)}$ be conveniently chosen sets.  We assume that there exists a coloring $c:[N]^{(k-1)}\to \ZZ_4$, such that there is no monochromatic copy of a member of $\cF_{I'}^{(k-1)}(t)$ (as defined in Definition \ref{def:F_I}) in colors $\{0,1\}$, and no monochromatic copy of a member of $\rev \cF_{I'}^{(k-1)}(t)$ in colors $\{2,3\}$. We then consider $\chi_c:[2^N]^{(k)}\to \ZZ_4$ as defined in (\ref{eqn: coloring4}), and show that there is no monochromatic copy of any member of $ \cF_I^{(k)}(n)$ in colors $\{0,1\}$ or $\rev \cF_I^{(k)}(n)$ in colors $\{2,3\}$, in the two following steps: 
\begin{itemize}
    \item First, we show that if $\chi_c$ contains a monochromatic copy of $(F,<)\in \cF_I^{(k)}(n)$ in color $\{0,1\}$  in the leaf set  $[2^N]$, then a large subset of leaves $Y\subseteq V(F)$ forms a left or right comb in the tree $T(N)$. This requires considering several cases and is discussed in Section \ref{sec: caseanalysis}. The case when $\chi_c$ contains a monochromatic copy of some $\rev\cF_I^{(k)}(n)$ in color $\{2,3\}$ is analogous.
    \item Then we conclude that for the comb $Y\subseteq V(F)\subseteq [2^N]$, the set of vertices $\pi(Y)\subseteq [N]$ in the projection forms a monochromatic copy of some member of $\cF^{(k-1)}_{I'}(t)$ in $\{0,1\}$ or $\rev\cF_{I'}^{(k-1)}(t)$ in $\{2,3\}$, which is forbidden by the induction hypothesis. To find the forbidden copies in the projection, we will use  Propositions \ref{prop: leftcomb} and \ref{prop: rightprojection}, which we first state and prove  in Section \ref{sec: projection}. 
\end{itemize}
We first state the Propositions \ref{prop: leftcomb} and \ref{prop: rightprojection} in the following section, i.e., Section \ref{sec: projection}. In Section \ref{sec: caseanalysis}, we present the case analysis mentioned above and apply Propositions \ref{prop: leftcomb} and \ref{prop: rightprojection} stated below to complete the proof of Theorem \ref{thm: steppingup}. 
\subsubsection{Projections}\label{sec: projection} Next, we state Propositions \ref{prop: leftcomb} and \ref{prop: rightprojection} which will allow us to obtain a large forbidden monochromatic structure in the projection. When these two propositions are applied, the vertex set $Y$ in the following statements will be a subset of the vertex set of $(F,<)\in \cF_{I}^{(k)}(n)$. We will show that when $Y$ forms a left comb, $\pi(Y)$ contains a large clique, and when it forms a right comb, $\pi(Y)$ is a member of some $\cF_{I'}^{(k-1)}(t)$.
\begin{proposition}[Left Comb Projection] \label{prop: leftcomb}
    Let $N\geq \l \ge k\geq 3$  be positive integers, $c:[N]^{(k-1)}\to \ZZ_4$ be a coloring and $\chi_c:[2^N]^{(k)}\to \ZZ_4$ be as defined in (\ref{eqn: coloring3}) and (\ref{eqn: coloring4}). Let $Y = \{y_0 < y_1 < \cdots < y_\l\}\subseteq [2^N]$ be a subset of leaves of $T=T(N)$ satisfying the following three conditions:
    \begin{enumerate}[label = (\alph*)]
        \item \label{it: leftlema} $Y$ forms a left comb, 
        \item \label{it: leftlemb}for every $J\in \set{2\ldots,\l}^{(k-1)}$, there exists a vertex $y_J$ such that $y_0 \leq y_J \le y_1,$\footnote{Here we allow $y_{J_1}= y_{J_2}$ for $J_1\ne J_2$.}
         \item\label{it: leftlemmac}  there exists $\alpha\in \ZZ_4$ such that  $\chi_c(Y^J)=\alpha$ for every element $Y^J$ in 
        $$\{Y^J = \{y_J\}\cup \{y_j:j\in J\}: J\in \{2,\dots,\l\}^{(k-1)}\}.$$
    \end{enumerate}
    Then $\pi(Y\setminus\{y_0\})^{(k-1)}\subseteq [N]^{(k-1)}$ is monochromatic in the coloring $c$.  
\end{proposition}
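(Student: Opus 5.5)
The plan is to show that each $Y^J$ is itself a left comb and that its projection $\pi(Y^J)$ equals $\pi(\{y_j:j\in J\})$. Once this holds, the coloring rule (\ref{eqn: coloring3}) or (\ref{eqn: coloring4}) gives $\chi_c(Y^J)=c(\pi(Y^J))$ for $k=3$ and $\chi_c(Y^J)=3-c(\pi(Y^J))$ for $k\ge4$. The map $\gamma\mapsto\gamma$ (resp.\ $\gamma\mapsto 3-\gamma$) is a bijection of $\ZZ_4$, so condition \ref{it: leftlemmac} then forces $c(\pi(\{y_j:j\in J\}))$ to be constant over all $J\in\{2,\dots,\l\}^{(k-1)}$.

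\textbf{Step 1: the projection.} Write $Y=\{y_0<\cdots<y_\l\}$ as a left comb with $\delta_i:=\delta(y_{i-1},y_i)$, so that $\delta_1>\delta_2>\cdots>\delta_\l$ and $\pi(Y\setminus\{y_0\})=\{\delta_2,\dots,\delta_\l\}$. By Remark \ref{fact: combstructure}\ref{it: comb2L}, any $J=\{i_1<\cdots<i_{k-1}\}\subseteq\{2,\dots,\l\}$ gives a left comb $\{y_j:j\in J\}$ whose gaps are $\delta_{i_2},\dots,\delta_{i_{k-1}}$. As $J$ ranges over $\{2,\dots,\l\}^{(k-1)}$, the sets $\{\delta_{i_1},\dots,\delta_{i_{k-1}}\}$ range over all $(k-1)$-subsets of $\pi(Y\setminus\{y_0\})$, since the $\delta_i$ are distinct. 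So it suffices to prove that
\[
\pi(Y^J)=\{\delta_{i_1},\delta_{i_2},\dots,\delta_{i_{k-1}}\}
\]
and that $Y^J$ is a left comb.

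\textbf{Step 2: the main point, locating $y_J$.} We have $y_0\le y_J\le y_1<y_{i_1}$. The task is to compute $\delta(y_J,y_{i_1})$ and show it equals $\delta_{i_1}$, which exceeds $\delta_{i_2}$. By Remark \ref{fact: combstructure}\ref{it: comb1L}, $a(y_0,y_{i_1})=a(y_1,y_{i_1})=a(y_{i_1-1},y_{i_1})$, at level $\delta_{i_1}$. Call this vertex $u$. Since $y_0<y_{i_1}$, $y_0$ lies in $L(u)$, and likewise $y_1\in L(u)$. The leaves of $L(u)$ form an interval, so $y_0\le y_J\le y_1$ gives $y_J\in L(u)$, while $y_{i_1}\in R(u)$. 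Hence $a(y_J,y_{i_1})=u$ and $\delta(y_J,y_{i_1})=\delta_{i_1}$.

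The boundary case $y_J=y_1$ needs a check: it is consistent because $\delta(y_1,y_{i_1})=\delta_{i_1}$ by Remark \ref{fact: combstructure}\ref{it: comb1L}. The case $y_J=y_0$ is consistent for the same reason. I also need $y_J<y_{i_1}$, which holds since $y_{i_1}>y_1$ because $i_1\ge2$, so $Y^J$ genuinely has $k$ elements.

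\textbf{Step 3: conclude.} The consecutive gaps of $Y^J=\{y_J<y_{i_1}<\cdots<y_{i_{k-1}}\}$ are $\delta_{i_1}>\delta_{i_2}>\cdots>\delta_{i_{k-1}}$. So $Y^J$ is a left comb with $\pi(Y^J)=\{\delta_{i_1},\dots,\delta_{i_{k-1}}\}$. In particular $Y^J$ is not a split, so the comb clause of the coloring rule applies, and the argument of the first paragraph finishes the proof.

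The only delicate step is Step 2, the interval argument placing $y_J$ in $L(u)$. It must hold uniformly for every $i_1\ge2$, and that uniformity is exactly why the hypothesis confines $y_J$ to the interval $[y_0,y_1]$.
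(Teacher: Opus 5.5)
Your proposal is correct and matches the paper's proof: you show each $Y^J$ is a left comb with $\delta(y_J,y_{i_1})=\delta(y_1,y_{i_1})=\delta_{i_1}$, read off $\pi(Y^J)=\{\delta_j:j\in J\}$ via Remark~\ref{fact: combstructure}, and then invert the left-comb clause of $\chi_c$. The only cosmetic difference is that you place $y_J$ in $L(u)$ by contiguity of the leaves below the left child of $u$, whereas the paper obtains the same equality from the min-identity (\ref{eqn: mindelta}) for $\delta$.
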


\begin{proof}
   Let $J=\{j_1< \cdots < j_{k-1}\}\in \{2,\cdots, \ell\}^{(k-1)}$. We will first show that $Y^J$ is a left comb for every $J\in \{2,\dots, \ell\}^{(k-1)}$. For that, in view of Definition \ref{def: comb} (\ref{it: leftcomb}), we need to show that:
    \begin{align}
        \label{eqn: leftcombproj}
        \delta(y_J, y_{j_1}) > \delta(y_{j_1}, y_{j_2}) > \cdots > \delta(y_{j_{k-2}}, y_{j_{k-1}}).
    \end{align}
    Since $j_1\ge 2$ and $Y$ is a left comb, the set $\{y_0, y_1, y_{j_1}, y_{j_2},\dots, y_{j_{k-1}}\}\subseteq Y$ forms a left comb, equivalently
    \begin{align}
        \label{eqn: leftcombproj2}
        \delta(y_0,y_1) > \delta(y_1,y_{j_1}) > \delta(y_{j_1},y_{j_2}) > \dots > \delta(y_{j_{k-2}}, y_{j_{k-1}}).
    \end{align}
    Further, observe that since $y_0\le y_J\le y_1 < y_{j_1}$, and $y_0, y_1, y_{j_1}$ forms a left comb, in view of (\ref{eqn: mindelta}), we have $\delta(y_J,y_{j_1})= \delta(y_1,y_{j_1}).$
    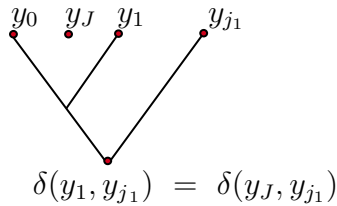
\begin{figure}[h!]
        \centering

\tikzset{every picture/.style={line width=0.75pt}} 

\begin{tikzpicture}[x=0.75pt,y=0.75pt,yscale=-0.5,xscale=0.5]

\draw    (204.4,109.79) -- (298.53,237.26) ;
\draw    (394.32,105.49) -- (298.53,237.26) ;
\draw    (309.27,105.94) -- (257.12,181.24) ;
\draw  [fill={rgb, 255:red, 208; green, 2; blue, 27 }  ,fill opacity=1 ] (295.1,233.98) .. controls (295.1,232.17) and (296.63,230.7) .. (298.53,230.7) .. controls (300.42,230.7) and (301.96,232.17) .. (301.96,233.98) .. controls (301.96,235.79) and (300.42,237.26) .. (298.53,237.26) .. controls (296.63,237.26) and (295.1,235.79) .. (295.1,233.98) -- cycle ;
\draw  [fill={rgb, 255:red, 208; green, 2; blue, 27 }  ,fill opacity=1 ] (390.89,105.49) .. controls (390.89,103.68) and (392.43,102.21) .. (394.32,102.21) .. controls (396.21,102.21) and (397.75,103.68) .. (397.75,105.49) .. controls (397.75,107.3) and (396.21,108.77) .. (394.32,108.77) .. controls (392.43,108.77) and (390.89,107.3) .. (390.89,105.49) -- cycle ;
\draw  [fill={rgb, 255:red, 208; green, 2; blue, 27 }  ,fill opacity=1 ] (305.84,105.94) .. controls (305.84,104.12) and (307.37,102.65) .. (309.27,102.65) .. controls (311.16,102.65) and (312.7,104.12) .. (312.7,105.94) .. controls (312.7,107.75) and (311.16,109.22) .. (309.27,109.22) .. controls (307.37,109.22) and (305.84,107.75) .. (305.84,105.94) -- cycle ;
\draw  [fill={rgb, 255:red, 208; green, 2; blue, 27 }  ,fill opacity=1 ] (200.97,106.51) .. controls (200.97,104.7) and (202.5,103.23) .. (204.4,103.23) .. controls (206.29,103.23) and (207.82,104.7) .. (207.82,106.51) .. controls (207.82,108.32) and (206.29,109.79) .. (204.4,109.79) .. controls (202.5,109.79) and (200.97,108.32) .. (200.97,106.51) -- cycle ;
\draw  [fill={rgb, 255:red, 208; green, 2; blue, 27 }  ,fill opacity=1 ] (255.97,106.51) .. controls (255.97,104.7) and (257.5,103.23) .. (259.4,103.23) .. controls (261.29,103.23) and (262.82,104.7) .. (262.82,106.51) .. controls (262.82,108.32) and (261.29,109.79) .. (259.4,109.79) .. controls (257.5,109.79) and (255.97,108.32) .. (255.97,106.51) -- cycle ;

\draw (198.97,77) node [anchor=north west][inner sep=0.75pt]    {$y_{0}$};
\draw (305.04,77) node [anchor=north west][inner sep=0.75pt]    {$y_{1}$};
\draw (395.65,77) node [anchor=north west][inner sep=0.75pt]    {$y_{j_{1}}$};
\draw (253.04,77) node [anchor=north west][inner sep=0.75pt]    {$y_{J}$};
\draw (220.65,240.43) node [anchor=north west][inner sep=0.75pt]    {$\delta ( y_{1} ,y_{j}{}_{_{1}}) \ =\ \delta ( y_{J} ,y_{j_{1}})$};
\end{tikzpicture}
        \caption{In view of (\ref{eqn: mindelta}), $\delta(y_J,y_1) \ge \delta(y_0,y_1).$ Since $\{y_0,y_1,y_{j_1}\}$ forms a left comb, $\delta(y_0,y_1) > \delta(y_{1}, y_{j_1}).$ Using (\ref{eqn: mindelta}), we have $\delta(y_J, y_{j_1}) = \delta(y_1,y_{j_1}).$}
        \label{fig:leftcombinsert1}
    \end{figure}
    Together with (\ref{eqn: leftcombproj2}), this implies that (\ref{eqn: leftcombproj}) holds. 
    
     Since $Y$ is a left comb, the set $\pi(Y\setminus\set{y_0})$ is equal to 
     \begin{align}
    \label{eqn: deltaleftcomb}
         \delta_2 := \delta(y_1, y_2) > \delta_3 := \delta(y_2,y_3) >\cdots > \delta_\l := \delta(y_{\l -1}, y_\l).
     \end{align}
     We will now show that 
     $$\pi(Y\setminus\{y_0\})^{(k-1)} = \{\{\delta_j: j\in J\}: J\in \set{2,\ldots,\ell}^{(k-1)}\} \subseteq [N]^{(k-1)}$$
    is monochromatic in $c$. Since $Y^J$ forms a left comb for every $J=\{j_1<\dots< j_{k-1}\}\in \set{2,\ldots,\ell}^{(k-1)}$, in view of \ref{it: comb2L} in Remark \ref{fact: combstructure}, together with the definition of the $\delta_i$ in (\ref{eqn: deltaleftcomb}) above,
    $$\pi\left(Y^J\right)= \{\delta(y_{j_{i-1}},y_{j_i})= \delta(y_{j_i-1},y_{j_i}) = \delta_{j_i}: i\in [k-1]\}=\set{\delta_j:j\in J}$$
    To summarize, we showed that for every $J\in \{2,\dots, \ell\}^{(k-1)}$, we have that $Y^J$ is a left comb and $\pi(Y^J) = \set{\delta_j:j\in J}$. Since by assumption,  $\chi_c\left(Y^J\right)=\alpha$ for every $J\in \{2,\dots, \ell\}^{(k-1)}$, and in view of how $\chi_c$ is defined in (\ref{eqn: coloring3}) and (\ref{eqn: coloring4}),  we must have that for every $J\in \{2,\dots, \ell\}^{(k-1)}$,
    \begin{align*}
        c\left(\pi\left(Y^J\right)\right) = c\left(\set{\delta_j:j\in J}\right)= \begin{cases}
            \alpha &\quad\text{if $k=3$,}\\
            3-\alpha &\quad \text{if $k\ge 4$.}
        \end{cases}
    \end{align*}
\end{proof}
\begin{proposition}[Right Comb Projection]
\label{prop: rightprojection}
    Let $N\geq \ell\geq k \geq 4$, and fix a $(k-1)$-subset $I\in [\ell]^{(k-1)}$ with $1\in I$. For a coloring $c: [N]^{(k-1)}\to \ZZ_4$, let $\chi_c:[2^N]^{(k)}\to \ZZ_4$ be as defined in (\ref{eqn: coloring4}). Let  $Y = \{y_0 < y_1 < \cdots < y_\ell\}$ be a subset of the leaves of $T = T(N)$ satisfying: 
       \begin{enumerate}[label = (\alph*)]
        \item\label{it: righta} $Y$ forms a right comb,
        \item\label{it: rightb}  for every $J\in \{2,\dots, \ell\}^{(k-1)}$, there exists a vertex $y_{J}$ such that $y_0 \leq y_{J} \leq y_1.$\footnote{Similar to Proposition \ref{prop: leftcomb} \ref{it: leftlemb}, here we allow distinct $J$s to map to the same $y_{J}$.}
        \item\label{it: rightc} There exists a vertex $y_0\le y_{I}<y_1$ satisfying $a(y_{I}, y_1) = u_Y = a(y_0,y_1).$
    \end{enumerate}
    Then the following holds:
    \begin{itemize}[label = ($\star$)]
        \item\label{it: rightlemmastar}  If there exists $\alpha\in \ZZ_4$ such that 
    $$ \chi_c({\{y_{J}\}\cup \{y_i: i\in J\}})= \alpha \text{ for every }  J \in \{2,\dots, \ell\}^{(k-1)}\cup \{I\},$$
    \end{itemize}
    then $c^{-1}(\alpha)\subseteq [N]^{(k-1)}$ contains a monochromatic copy of $(F,<)\in \cF^{(k-1)}_{I'}(\ell-1)$ with distinguished set of vertices $\pi(Y)$ and $I' = I\setminus \{\max I\}$. 
    \end{proposition}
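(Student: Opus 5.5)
We compute $\pi(Y^J)$ for each edge $Y^J:=\{y_J\}\cup\{y_j:j\in J\}$ directly, show that every $Y^J$ is a right comb, and read off a copy of a member of $\cF^{(k-1)}_{I'}(\ell-1)$ inside $c^{-1}(\alpha)$.

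\emph{Setup.} Write $\delta_i:=\delta(y_{i-1},y_i)$ for $1\le i\le \ell$. Since $Y$ is a right comb, $\delta_1<\delta_2<\cdots<\delta_\ell$ and $\pi(Y)=\{\delta_1,\dots,\delta_\ell\}$. The distinguished vertices will be $x_m:=\delta_{m+1}$ for $0\le m\le \ell-1$, so that $x_0<x_1<\cdots<x_{\ell-1}$. By Remark \ref{fact: combstructure} \ref{it: comb1R}, $a(y_i,y_j)=a(y_i,y_{i+1})$ for $i<j$. Hence for any $j_1<\dots<j_m$ with $j_1\ge 1$, the set $\{y_{j_1},\dots,y_{j_m}\}$ is a right comb with projection $\{\delta_{j_i+1}:1\le i\le m-1\}$, by \ref{it: comb2R}.

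\emph{Ordinary edges.} Fix $J=\{j_1<\dots<j_{k-1}\}\subseteq\{2,\dots,\ell\}$. For three leaves we have $\delta(x,z)=\min\{\delta(x,y),\delta(y,z)\}$. Applying this to $y_J\le y_1<y_{j_1}$, and using $\delta(y_1,y_{j_1})=\delta_2$, gives
\[
d_J:=\delta(y_J,y_{j_1})=\min\{\delta(y_J,y_1),\delta_2\},
\]
with $d_J=\delta_2$ when $y_J=y_1$. By (\ref{eqn: mindelta}), $\delta(y_J,y_1)\ge\delta(y_0,y_1)=\delta_1$, so $\delta_1\le d_J\le \delta_2$, i.e.\ $x_0\le d_J\le x_1$.

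Since $j_1\ge 2$, we get $d_J\le\delta_2<\delta_{j_1+1}$. Therefore the sequence $d_J,\delta_{j_1+1},\dots,\delta_{j_{k-2}+1}$ is increasing, so $Y^J$ is a right comb with
\[
\pi(Y^J)=\{d_J\}\cup\{x_{j_i}:1\le i\le k-2\}.
\]
By (\ref{eqn: coloring4}), right combs receive $c(\pi(\cdot))$, so the hypothesis $(\star)$ gives $c(\pi(Y^J))=\alpha$.

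\emph{Special edge.} Write $I=\{1=i_1<i_2<\dots<i_{k-1}\}$. Hypothesis \ref{it: rightc} gives $\delta(y_I,y_1)=\delta_1$, which is smaller than all later gaps. Hence $Y^I$ is a right comb with
\[
\pi(Y^I)=\{\delta_1,\delta_2,\delta_{i_2+1},\dots,\delta_{i_{k-2}+1}\}=\{x_0\}\cup\{x_i:i\in I'\},\qquad I'=I\setminus\{\max I\}.
\]
Note that $1\in I'$ and $I'\subseteq[\ell-1]$, as required. Again $c(\pi(Y^I))=\alpha$.

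\emph{Assembling $F$.} For each $J'\in\{2,\dots,\ell-1\}^{(k-2)}$, let $J:=J'\cup\{\max J'+1\}$. Then $J\subseteq\{2,\dots,\ell\}$ and $J\setminus\{\max J\}=J'$, so we may set $x_{J'}:=d_J$. Put $x_{I'}:=x_0$. This satisfies every clause of Definition \ref{def:F_I}:
\begin{itemize}
\item the distinguished set is $\pi(Y)$, ordered as $x_0<\cdots<x_{\ell-1}$;
\item $x_0\le x_{J'}\le x_1$, and coincidences among the $x_{J'}$, or with $x_0$ or $x_1$, are allowed;
\item each edge $\{x_{J'}\}\cup\{x_j:j\in J'\}$ equals $\pi(Y^J)$ and has $c$-color $\alpha$.
\end{itemize}
Each such edge has exactly $k-1$ elements, since $x_{J'}\le x_1<x_j$ for all $j\in J'\subseteq\{2,\dots\}$. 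This yields the required monochromatic copy of a member of $\cF^{(k-1)}_{I'}(\ell-1)$ in $c^{-1}(\alpha)$.

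The main obstacle is the \emph{Ordinary edges} step: identifying $\delta(y_J,y_{j_1})$ exactly and confirming that it lies in $[\delta_1,\delta_2]$. This uses only the three-leaf $\min$ rule, (\ref{eqn: mindelta}), and the fact that $y_J$ sits between $y_0$ and $y_1$. Once that is in place, the comb structure and the index shift $x_m=\delta_{m+1}$ make the remaining checks routine.
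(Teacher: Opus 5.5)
Your proof is correct and follows essentially the same route as the paper. Both arguments show that each $Y^J$ is a right comb via $\delta(y_J,y_{j_1})\le\delta(y_1,y_2)<\delta(y_{j_1},y_{j_2})$, using hypothesis (c) when $J=I$, and then read off the projections $\pi(Y^J)$ as the edges of a member of $\cF^{(k-1)}_{I'}(\ell-1)$ on $\pi(Y)$; the differences are only cosmetic (the index shift $x_m=\delta_{m+1}$, and extending $J'$ by $\max J'+1$ rather than by $\ell$ as the paper does, which works equally well).
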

    \vspace{-0.3cm}
\begin{figure}[h!]
    \centering

\tikzset{every picture/.style={line width=0.75pt}} 

\begin{tikzpicture}[x=0.75pt,y=0.75pt,yscale=-1.0,xscale=1]

\draw  [dash pattern={on 4.5pt off 4.5pt}]  (193.99,246.89) -- (388.57,246.89) ;
\draw    (192.33,77.57) -- (242.87,169.51) ;
\draw    (85.13,77.31) -- (193.99,246.89) ;
\draw    (300.59,77.31) -- (191.72,251.19) ;
\draw    (170.2,76.98) -- (231.64,187.89) ;
\draw  [fill={rgb, 255:red, 208; green, 2; blue, 27 }  ,fill opacity=1 ] (189.87,246.89) .. controls (189.87,244.51) and (191.72,242.58) .. (193.99,242.58) .. controls (196.27,242.58) and (198.12,244.51) .. (198.12,246.89) .. controls (198.12,249.27) and (196.27,251.19) .. (193.99,251.19) .. controls (191.72,251.19) and (189.87,249.27) .. (189.87,246.89) -- cycle ;
\draw  [fill={rgb, 255:red, 208; green, 2; blue, 27 }  ,fill opacity=1 ] (81,77.31) .. controls (81,74.93) and (82.85,73) .. (85.13,73) .. controls (87.4,73) and (89.25,74.93) .. (89.25,77.31) .. controls (89.25,79.69) and (87.4,81.62) .. (85.13,81.62) .. controls (82.85,81.62) and (81,79.69) .. (81,77.31) -- cycle ;
\draw  [fill={rgb, 255:red, 208; green, 2; blue, 27 }  ,fill opacity=1 ] (166.08,76.98) .. controls (166.08,74.6) and (167.93,72.67) .. (170.2,72.67) .. controls (172.48,72.67) and (174.33,74.6) .. (174.33,76.98) .. controls (174.33,79.36) and (172.48,81.29) .. (170.2,81.29) .. controls (167.93,81.29) and (166.08,79.36) .. (166.08,76.98) -- cycle ;
\draw  [fill={rgb, 255:red, 208; green, 2; blue, 27 }  ,fill opacity=1 ] (188.21,77.57) .. controls (188.21,75.19) and (190.05,73.26) .. (192.33,73.26) .. controls (194.61,73.26) and (196.45,75.19) .. (196.45,77.57) .. controls (196.45,79.95) and (194.61,81.87) .. (192.33,81.87) .. controls (190.05,81.87) and (188.21,79.95) .. (188.21,77.57) -- cycle ;
\draw  [fill={rgb, 255:red, 208; green, 2; blue, 52 }  ,fill opacity=1 ] (296.46,77.31) .. controls (296.46,74.93) and (298.31,73) .. (300.59,73) .. controls (302.86,73) and (304.71,74.93) .. (304.71,77.31) .. controls (304.71,79.69) and (302.86,81.62) .. (300.59,81.62) .. controls (298.31,81.62) and (296.46,79.69) .. (296.46,77.31) -- cycle ;
\draw    (243.48,77.03) -- (269.3,127) ;
\draw  [fill={rgb, 255:red, 208; green, 2; blue, 52 }  ,fill opacity=1 ] (239.35,77.03) .. controls (239.35,74.65) and (241.2,72.73) .. (243.48,72.73) .. controls (245.75,72.73) and (247.6,74.65) .. (247.6,77.03) .. controls (247.6,79.41) and (245.75,81.34) .. (243.48,81.34) .. controls (241.2,81.34) and (239.35,79.41) .. (239.35,77.03) -- cycle ;
\draw    (214.99,77.03) -- (254.7,150.45) ;
\draw  [fill={rgb, 255:red, 208; green, 2; blue, 27 }  ,fill opacity=1 ] (210.87,77.03) .. controls (210.87,74.65) and (212.71,72.73) .. (214.99,72.73) .. controls (217.27,72.73) and (219.11,74.65) .. (219.11,77.03) .. controls (219.11,79.41) and (217.27,81.34) .. (214.99,81.34) .. controls (212.71,81.34) and (210.87,79.41) .. (210.87,77.03) -- cycle ;
\draw    (108.23,77.33) .. controls (93.11,108.59) and (142.84,93.24) .. (132.32,151.09) ;
\draw  [fill={rgb, 255:red, 208; green, 2; blue, 52 }  ,fill opacity=1 ] (104.1,77.33) .. controls (104.1,74.95) and (105.95,73.02) .. (108.23,73.02) .. controls (110.51,73.02) and (112.35,74.95) .. (112.35,77.33) .. controls (112.35,79.71) and (110.51,81.64) .. (108.23,81.64) .. controls (105.95,81.64) and (104.1,79.71) .. (104.1,77.33) -- cycle ;
\draw    (387.91,69.11) -- (387.91,289) ;
\draw  [dash pattern={on 4.5pt off 4.5pt}]  (269.3,127) -- (388.57,127) ;
\draw  [dash pattern={on 4.5pt off 4.5pt}]  (254.7,150.45) -- (388.57,150.45) ;
\draw  [dash pattern={on 4.5pt off 4.5pt}]  (242.87,169.51) -- (388,169.51) ;
\draw  [dash pattern={on 4.5pt off 4.5pt}]  (231.64,187.89) -- (388.29,187.89) ;
\draw  [fill={rgb, 255:red, 65; green, 117; blue, 5 }  ,fill opacity=1 ] (385.85,127) .. controls (385.85,125.81) and (386.78,124.84) .. (387.91,124.84) .. controls (389.05,124.84) and (389.98,125.81) .. (389.98,127) .. controls (389.98,128.19) and (389.05,129.15) .. (387.91,129.15) .. controls (386.78,129.15) and (385.85,128.19) .. (385.85,127) -- cycle ;
\draw  [fill={rgb, 255:red, 65; green, 117; blue, 5 }  ,fill opacity=1 ] (385.85,150.45) .. controls (385.85,149.26) and (386.78,148.29) .. (387.91,148.29) .. controls (389.05,148.29) and (389.98,149.26) .. (389.98,150.45) .. controls (389.98,151.64) and (389.05,152.6) .. (387.91,152.6) .. controls (386.78,152.6) and (385.85,151.64) .. (385.85,150.45) -- cycle ;
\draw  [fill={rgb, 255:red, 65; green, 117; blue, 5 }  ,fill opacity=1 ] (385.85,169.87) .. controls (385.85,168.69) and (386.78,167.72) .. (387.91,167.72) .. controls (389.05,167.72) and (389.98,168.69) .. (389.98,169.87) .. controls (389.98,171.06) and (389.05,172.03) .. (387.91,172.03) .. controls (386.78,172.03) and (385.85,171.06) .. (385.85,169.87) -- cycle ;
\draw  [fill={rgb, 255:red, 65; green, 117; blue, 5 }  ,fill opacity=1 ] (386.23,187.89) .. controls (386.23,186.7) and (387.15,185.73) .. (388.29,185.73) .. controls (389.43,185.73) and (390.35,186.7) .. (390.35,187.89) .. controls (390.35,189.08) and (389.43,190.04) .. (388.29,190.04) .. controls (387.15,190.04) and (386.23,189.08) .. (386.23,187.89) -- cycle ;
\draw  [fill={rgb, 255:red, 65; green, 117; blue, 5 }  ,fill opacity=1 ] (386.14,247.3) .. controls (386.14,246.12) and (387.06,245.15) .. (388.2,245.15) .. controls (389.34,245.15) and (390.26,246.12) .. (390.26,247.3) .. controls (390.26,248.49) and (389.34,249.46) .. (388.2,249.46) .. controls (387.06,249.46) and (386.14,248.49) .. (386.14,247.3) -- cycle ;
\draw  [fill={rgb, 255:red, 208; green, 2; blue, 52 }  ,fill opacity=0.13 ][dash pattern={on 0.84pt off 2.51pt}] (101,82) .. controls (112,96) and (114,66) .. (132,65) .. controls (150,64) and (159,87) .. (183.06,86.93) .. controls (207.11,86.86) and (219.06,47.93) .. (244.06,46.93) .. controls (269.06,45.93) and (304.11,100.86) .. (309.06,83.93) .. controls (314,67) and (270.06,29.93) .. (241.06,28.93) .. controls (212.06,27.93) and (203.06,68.93) .. (181.06,67.93) .. controls (159.06,66.93) and (154,43) .. (129,44) .. controls (104,45) and (90,68) .. (101,82) -- cycle ;
\draw  [dash pattern={on 0.84pt off 2.51pt}] (70,48) -- (327,48) -- (327,88) -- (70,88) -- cycle ;
\draw  [dash pattern={on 0.84pt off 2.51pt}] (353.85,97.07) -- (427,97.07) -- (427,274.4) -- (353.85,274.4) -- cycle ;
\draw  [fill={rgb, 255:red, 241; green, 5; blue, 5 }  ,fill opacity=0.12 ][dash pattern={on 0.84pt off 2.51pt}] (390,160) .. controls (410,160) and (397,174) .. (396,204) .. controls (395,234) and (413,260) .. (389,257) .. controls (365,254) and (374.66,232.89) .. (373,213) .. controls (371.34,193.11) and (370,160) .. (390,160) -- cycle ;

\draw (192.98,252.83) node [anchor=north west][inner sep=0.75pt]    {$u_{Y}$};
\draw (77.45,51) node [anchor=north west][inner sep=0.75pt]    {$y_{0}$};
\draw (163.27,51) node [anchor=north west][inner sep=0.75pt]    {$y_{1}$};
\draw (185.09,51) node [anchor=north west][inner sep=0.75pt]    {$y_{2}$};
\draw (292.95,51) node [anchor=north west][inner sep=0.75pt]    {$y_{\ell }$};
\draw (219.97,72.07) node [anchor=north west][inner sep=0.75pt] [xscale=0.8,yscale=0.8]  {$\cdots $};
\draw (208.01,51) node [anchor=north west][inner sep=0.75pt]    {$y_{3}$};
\draw (105.41,51) node [anchor=north west][inner sep=0.75pt]    {$y_{I}$};
\draw (238.03,51) node [anchor=north west][inner sep=0.75pt]    {$y_{\ell -1}$};
\draw (400.7,236.06) node [anchor=north west][inner sep=0.75pt]    {$\delta _{0}$};
\draw (399.04,178.89) node [anchor=north west][inner sep=0.75pt]    {$\delta _{1}$};
\draw (399.04,159.54) node [anchor=north west][inner sep=0.75pt]    {$\delta _{2}$};
\draw (397.73,139.42) node [anchor=north west][inner sep=0.75pt]    {$\delta _{3}$};
\draw (394.84,115.75) node [anchor=north west][inner sep=0.75pt]    {$\delta _{\ell -1}$};
\draw (342,33.4) node [anchor=north west][inner sep=0.75pt]  [font=\footnotesize,color={rgb, 255:red, 208; green, 2; blue, 52 }  ,opacity=1 ]  {$\textcolor[rgb]{0.82,0.01,0.2}{I}\textcolor[rgb]{0.82,0.01,0.2}{}\textcolor[rgb]{0.82,0.01,0.2}{\ =\ }\textcolor[rgb]{0.82,0.01,0.2}{\{}\textcolor[rgb]{0.82,0.01,0.2}{1,2,\ell }\textcolor[rgb]{0.82,0.01,0.2}{\}}$};
\draw (147,28.4) node [anchor=north west][inner sep=0.75pt]    {$Y$};
\draw (433,168.4) node [anchor=north west][inner sep=0.75pt]    {$\pi ( Y)\subseteq [N]$};

\end{tikzpicture}
    \caption{Right Comb Projection. An example where $k = 4$ and the set $I = \{1,2,\ell\}$. The red edge is an example of the set $Y^I = \{y_{I}\}\cup \{y_1,y_2,y_\ell\}.$}
    \label{fig:RightProj}
\end{figure}
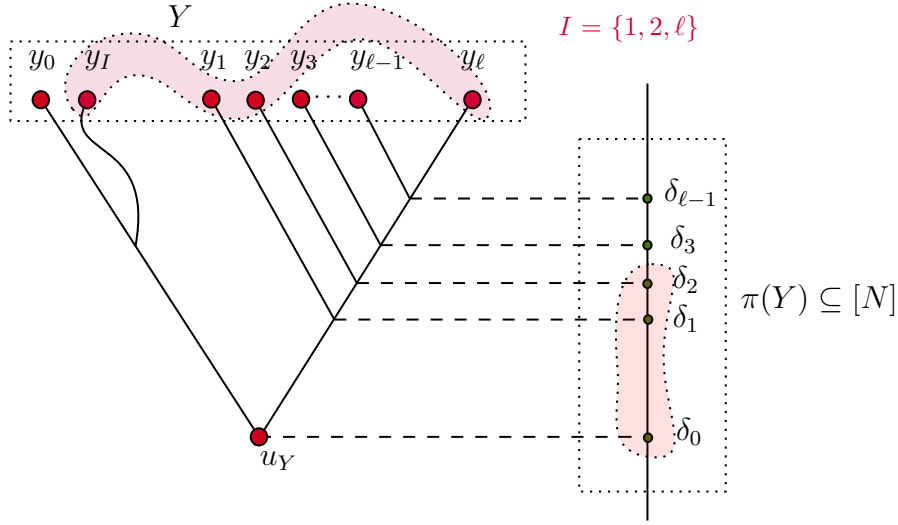
\begin{proof}
 We first describe the hypergraph $(F,<)$, which is promised in the conclusion of the proposition. 
  Fix a right comb $Y= \{y_0 < y_ 1< \cdots < y_\ell\}$ (as shown in Figure \ref{fig:RightProj}) and for   $0\le i \le \ell-1$, we let:  
    $$u_i=a(y_i,y_{i+1}), \quad \delta_i=\delta(y_i,y_{i+1})=\pi(u_i).$$
    Note that, as shown in Figure \ref{fig:RightProj},
    \begin{align}
    \label{eqn: deltasrightproj}
        \pi(Y)=\set{\delta_0<\delta_1<\ldots< \delta_{\ell-1}}.
    \end{align}
    For every $(k-2)$-set $J'\in \{2,\dots,\ell-1\}^{(k-2)}$ consider the $(k-1)$-set $J'\cup \{\ell\}$ and let: 
    $$\delta_{J'} = \delta(y_{J'\cup\{\ell\}}, y_\ell) \text{ and } \delta_{I'} = \delta_0 = \delta(y_I,y_\ell),$$
    where $I' = I\setminus \{\max I\}$. Note that since $y_0\le y_{J'\cup\{\ell\}}\le y_1$ and $Y$ is a right comb, in view of (\ref{eqn: mindelta}), we have for every $J'\in \{2,\dots,\ell-1\}^{(k-2)}$, 
    $$\delta_{J'} = \delta(y_{J'\cup \{\ell\}},y_\ell) = \min \{\delta(y_{J'\cup \{\ell\}}, y_1), \delta(y_1,y_\ell) \} \le \delta(y_1,y_\ell) =\delta(y_1,y_2) = \delta_1.$$
    Further, since $y_0\le y_{J'\cup \{\ell\}}\le y_\ell$, by (\ref{eqn: mindelta}) we have $\delta(y_0, y_\ell) \ge \delta_{J'}$. Consequently,
    \begin{align}
        \label{eqn: deltaJ'}
        \delta_0\le \delta_{J'} \le \delta_1.
    \end{align}
    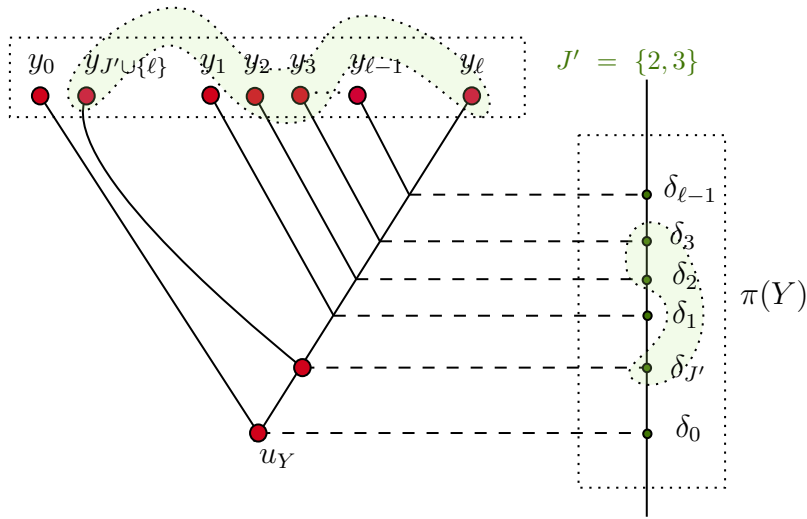
\begin{figure}[h]
        \centering
\tikzset{every picture/.style={line width=0.75pt}} 
\begin{tikzpicture}[x=0.75pt,y=0.75pt,yscale=-1,xscale=1]

\draw  [dash pattern={on 4.5pt off 4.5pt}]  (193.99,246.89) -- (388.57,246.89) ;
\draw    (192.33,77.57) -- (242.87,169.51) ;
\draw    (85.13,77.31) -- (193.99,246.89) ;
\draw    (300.59,77.31) -- (191.72,251.19) ;
\draw    (170.2,76.98) -- (231.64,187.89) ;
\draw  [fill={rgb, 255:red, 208; green, 2; blue, 27 }  ,fill opacity=1 ] (189.87,246.89) .. controls (189.87,244.51) and (191.72,242.58) .. (193.99,242.58) .. controls (196.27,242.58) and (198.12,244.51) .. (198.12,246.89) .. controls (198.12,249.27) and (196.27,251.19) .. (193.99,251.19) .. controls (191.72,251.19) and (189.87,249.27) .. (189.87,246.89) -- cycle ;
\draw  [fill={rgb, 255:red, 208; green, 2; blue, 27 }  ,fill opacity=1 ] (81,77.31) .. controls (81,74.93) and (82.85,73) .. (85.13,73) .. controls (87.4,73) and (89.25,74.93) .. (89.25,77.31) .. controls (89.25,79.69) and (87.4,81.62) .. (85.13,81.62) .. controls (82.85,81.62) and (81,79.69) .. (81,77.31) -- cycle ;
\draw  [fill={rgb, 255:red, 208; green, 2; blue, 27 }  ,fill opacity=1 ] (166.08,76.98) .. controls (166.08,74.6) and (167.93,72.67) .. (170.2,72.67) .. controls (172.48,72.67) and (174.33,74.6) .. (174.33,76.98) .. controls (174.33,79.36) and (172.48,81.29) .. (170.2,81.29) .. controls (167.93,81.29) and (166.08,79.36) .. (166.08,76.98) -- cycle ;
\draw  [fill={rgb, 255:red, 208; green, 2; blue, 27 }  ,fill opacity=1 ] (188.21,77.57) .. controls (188.21,75.19) and (190.05,73.26) .. (192.33,73.26) .. controls (194.61,73.26) and (196.45,75.19) .. (196.45,77.57) .. controls (196.45,79.95) and (194.61,81.87) .. (192.33,81.87) .. controls (190.05,81.87) and (188.21,79.95) .. (188.21,77.57) -- cycle ;
\draw  [fill={rgb, 255:red, 208; green, 2; blue, 52 }  ,fill opacity=1 ] (296.46,77.31) .. controls (296.46,74.93) and (298.31,73) .. (300.59,73) .. controls (302.86,73) and (304.71,74.93) .. (304.71,77.31) .. controls (304.71,79.69) and (302.86,81.62) .. (300.59,81.62) .. controls (298.31,81.62) and (296.46,79.69) .. (296.46,77.31) -- cycle ;
\draw    (243.48,77.03) -- (269.3,127) ;
\draw  [fill={rgb, 255:red, 208; green, 2; blue, 52 }  ,fill opacity=1 ] (239.35,77.03) .. controls (239.35,74.65) and (241.2,72.73) .. (243.48,72.73) .. controls (245.75,72.73) and (247.6,74.65) .. (247.6,77.03) .. controls (247.6,79.41) and (245.75,81.34) .. (243.48,81.34) .. controls (241.2,81.34) and (239.35,79.41) .. (239.35,77.03) -- cycle ;
\draw    (214.99,77.03) -- (254.7,150.45) ;
\draw  [fill={rgb, 255:red, 208; green, 2; blue, 27 }  ,fill opacity=1 ] (210.87,77.03) .. controls (210.87,74.65) and (212.71,72.73) .. (214.99,72.73) .. controls (217.27,72.73) and (219.11,74.65) .. (219.11,77.03) .. controls (219.11,79.41) and (217.27,81.34) .. (214.99,81.34) .. controls (212.71,81.34) and (210.87,79.41) .. (210.87,77.03) -- cycle ;
\draw    (108.23,77.33) .. controls (93.11,108.59) and (178,183) .. (216,214) ;
\draw  [fill={rgb, 255:red, 208; green, 2; blue, 52 }  ,fill opacity=1 ] (104.1,77.33) .. controls (104.1,74.95) and (105.95,73.02) .. (108.23,73.02) .. controls (110.51,73.02) and (112.35,74.95) .. (112.35,77.33) .. controls (112.35,79.71) and (110.51,81.64) .. (108.23,81.64) .. controls (105.95,81.64) and (104.1,79.71) .. (104.1,77.33) -- cycle ;
\draw    (387.91,69.11) -- (387.91,289) ;
\draw  [dash pattern={on 4.5pt off 4.5pt}]  (269.3,127) -- (388.57,127) ;
\draw  [dash pattern={on 4.5pt off 4.5pt}]  (254.7,150.45) -- (388.57,150.45) ;
\draw  [dash pattern={on 4.5pt off 4.5pt}]  (242.87,169.51) -- (388,169.51) ;
\draw  [dash pattern={on 4.5pt off 4.5pt}]  (231.64,187.89) -- (388.29,187.89) ;
\draw  [dash pattern={on 4.5pt off 4.5pt}]  (216,214) -- (388,214) ;
\draw  [fill={rgb, 255:red, 65; green, 117; blue, 5 }  ,fill opacity=1 ] (385.85,127) .. controls (385.85,125.81) and (386.78,124.84) .. (387.91,124.84) .. controls (389.05,124.84) and (389.98,125.81) .. (389.98,127) .. controls (389.98,128.19) and (389.05,129.15) .. (387.91,129.15) .. controls (386.78,129.15) and (385.85,128.19) .. (385.85,127) -- cycle ;
\draw  [fill={rgb, 255:red, 65; green, 117; blue, 5 }  ,fill opacity=1 ] (385.85,150.45) .. controls (385.85,149.26) and (386.78,148.29) .. (387.91,148.29) .. controls (389.05,148.29) and (389.98,149.26) .. (389.98,150.45) .. controls (389.98,151.64) and (389.05,152.6) .. (387.91,152.6) .. controls (386.78,152.6) and (385.85,151.64) .. (385.85,150.45) -- cycle ;
\draw  [fill={rgb, 255:red, 65; green, 117; blue, 5 }  ,fill opacity=1 ] (385.85,169.87) .. controls (385.85,168.69) and (386.78,167.72) .. (387.91,167.72) .. controls (389.05,167.72) and (389.98,168.69) .. (389.98,169.87) .. controls (389.98,171.06) and (389.05,172.03) .. (387.91,172.03) .. controls (386.78,172.03) and (385.85,171.06) .. (385.85,169.87) -- cycle ;
\draw  [fill={rgb, 255:red, 65; green, 117; blue, 5 }  ,fill opacity=1 ] (386.23,187.89) .. controls (386.23,186.7) and (387.15,185.73) .. (388.29,185.73) .. controls (389.43,185.73) and (390.35,186.7) .. (390.35,187.89) .. controls (390.35,189.08) and (389.43,190.04) .. (388.29,190.04) .. controls (387.15,190.04) and (386.23,189.08) .. (386.23,187.89) -- cycle ;
\draw  [fill={rgb, 255:red, 65; green, 117; blue, 5 }  ,fill opacity=1 ] (386.14,247.3) .. controls (386.14,246.12) and (387.06,245.15) .. (388.2,245.15) .. controls (389.34,245.15) and (390.26,246.12) .. (390.26,247.3) .. controls (390.26,248.49) and (389.34,249.46) .. (388.2,249.46) .. controls (387.06,249.46) and (386.14,248.49) .. (386.14,247.3) -- cycle ;
\draw  [fill={rgb, 255:red, 184; green, 233; blue, 134 }  ,fill opacity=0.18 ][dash pattern={on 0.84pt off 2.51pt}] (101,82) .. controls (112,96) and (131,53) .. (149,52) .. controls (167,51) and (178,91) .. (208,91) .. controls (238,91) and (227,59) .. (252,58) .. controls (277,57) and (304.11,100.86) .. (309.06,83.93) .. controls (314,67) and (273,37) .. (248,37) .. controls (223,37) and (229,62) .. (205,64) .. controls (181,66) and (176,32) .. (151,33) .. controls (126,34) and (90,68) .. (101,82) -- cycle ;
\draw  [dash pattern={on 0.84pt off 2.51pt}] (70,48) -- (327,48) -- (327,88) -- (70,88) -- cycle ;
\draw  [dash pattern={on 0.84pt off 2.51pt}] (353.85,97.07) -- (427,97.07) -- (427,274.4) -- (353.85,274.4) -- cycle ;
\draw  [fill={rgb, 255:red, 208; green, 2; blue, 27 }  ,fill opacity=1 ] (211.88,214) .. controls (211.88,211.62) and (213.72,209.69) .. (216,209.69) .. controls (218.28,209.69) and (220.12,211.62) .. (220.12,214) .. controls (220.12,216.38) and (218.28,218.31) .. (216,218.31) .. controls (213.72,218.31) and (211.88,216.38) .. (211.88,214) -- cycle ;

\draw  [fill={rgb, 255:red, 65; green, 117; blue, 5 }  ,fill opacity=1 ] (385.94,214.15) .. controls (385.94,212.96) and (386.86,212) .. (388,212) .. controls (389.14,212) and (390.06,212.96) .. (390.06,214.15) .. controls (390.06,215.34) and (389.14,216.31) .. (388,216.31) .. controls (386.86,216.31) and (385.94,215.34) .. (385.94,214.15) -- cycle ;
\draw  [fill={rgb, 255:red, 184; green, 233; blue, 134 }  ,fill opacity=0.18 ][dash pattern={on 0.84pt off 2.51pt}] (389.11,140.96) .. controls (400.11,139.96) and (408.11,153.96) .. (413.11,168.96) .. controls (418.11,183.96) and (417.11,198.96) .. (409.11,210.96) .. controls (401.11,222.96) and (386.21,225.92) .. (380.11,218.96) .. controls (374,212) and (399.11,202.96) .. (398.11,186.96) .. controls (397.11,170.96) and (380.11,176.96) .. (378.11,165.96) .. controls (376.11,154.96) and (378.11,141.96) .. (389.11,140.96) -- cycle ;

\draw (192.98,253.83) node [anchor=north west][inner sep=0.75pt]    {$u_{Y}$};
\draw (77.45,55) node [anchor=north west][inner sep=0.75pt]    {$y_{0}$};
\draw (163.27,55) node [anchor=north west][inner sep=0.75pt]    {$y_{1}$};
\draw (185.09,55) node [anchor=north west][inner sep=0.75pt]    {$y_{2}$};
\draw (292.95,55) node [anchor=north west][inner sep=0.75pt]    {$y_{\ell }$};
\draw (219.97,72.07) node [anchor=north west][inner sep=0.75pt]  [xscale=0.8,yscale=0.8]  {$\cdots $};
\draw (208.01,55) node [anchor=north west][inner sep=0.75pt]    {$y_{3}$};
\draw (105.41,55) node [anchor=north west][inner sep=0.75pt]    {$y_{J'\cup \{\ell \}}$};
\draw (238.03,55) node [anchor=north west][inner sep=0.75pt]    {$y_{\ell -1}$};
\draw (400.7,236.06) node [anchor=north west][inner sep=0.75pt]    {$\delta _{0}$};
\draw (399.04,178.89) node [anchor=north west][inner sep=0.75pt]    {$\delta _{1}$};
\draw (399.04,159.54) node [anchor=north west][inner sep=0.75pt]    {$\delta _{2}$};
\draw (397.73,139.42) node [anchor=north west][inner sep=0.75pt]    {$\delta _{3}$};
\draw (394.84,115.75) node [anchor=north west][inner sep=0.75pt]    {$\delta _{\ell -1}$};
\draw (341,52.4) node [anchor=north west][inner sep=0.75pt]  [font=\footnotesize,color={rgb, 255:red, 65; green, 117; blue, 5 }  ,opacity=1 ]  {$\textcolor[rgb]{0.25,0.46,0.02}{J'\ =\ \{2,3\}}$};
\draw (433,168.4) node [anchor=north west][inner sep=0.75pt]    {$\pi ( Y)$};
\draw (398.04,205.89) node [anchor=north west][inner sep=0.75pt]    {$\delta _{J'}$};
\end{tikzpicture}
        \caption{An example of an edge $\{\delta_{J'}\}\cup \{\delta_j:j\in J'\}$ of $F$ where $k = 4$ and $J' = \{2,3\}$. Note that $\{\delta_{J'}\}\cup \{\delta_j:j\in J'\}$ is the projection of  $Y^J = \{y_J\}\cup \{y_j:j\in J\}$ where $J= J'\cup \{\ell\}.$}
        \label{fig: rightprojdeltaJ'}
    \end{figure}
    \\
    Let $(F,<)$ be the hypergraph in the projection $[N]^{(k-1)}$ with vertex set: 
    \begin{align*}
        V(F)= \{\delta_0\}\cup \{\delta_{J'}:J' \in\{2,\dots,\ell-1\}^{(k-2)}\}\cup \{\delta_1,\dots,\delta_{\ell-1}\}
    \end{align*}
The edge set of $F$ consists of the special edge $\{\delta_0\} \cup \{\delta_i:i\in I'\}$, together with edges $\{\delta_{J'}\}\cup \{\delta_j:j\in J'\},$ for each $J'\in \{2,\dots,\ell\}^{(k-1)}$ (see Fig. \ref{fig:RightProj} and \ref{fig: rightprojdeltaJ'}). 
        In other words, 
        $$ F=\{\{\delta_0\}\cup \{\delta_j:j\in I'\}\}\cup \{\{\delta_{J'}\}\cup \{\delta_j:j\in J'\}:J'\in \{2,\dots,\ell\}^{(k-1)}\}.$$
Note that in view of (\ref{eqn: deltaJ'}), we have $(F,<) \in \cF_{I'}^{(k-1)}(\ell-1).$

   It remains to show that $c^{-1}(\alpha)$ contains $(F,<)$, i.e.,  $c\left(\set{\delta_{J'}}\cup\set{\delta_j:j\in J'}\right)=\alpha$ for all $J'\in\set{2,\ldots,\ell-1}^{(k-2)}\cup \{I'\}$. Fix $J'\in\set{2,\ldots,\ell-1}^{(k-2)}\cup \{I'\}$ and set $J' = \set{j_1<\cdots<j_{k-2}}$. Let 
    $$\text{$J = J'\cup \{\ell\}$ if $J'\ne I'$ and let $J = I$ if $J' = I'$. Let $j_{k-1}:=\max J$ in each case.}$$ 
    We first show that the set $Y^{J}=\{y_{J}\}\cup \{y_j: j\in J\}$ is a right comb, or equivalently,
    \begin{align}
        \label{eqn: rightcombeqn1}
        \delta(y_J, y_{j_1}) < \delta(y_{j_1}, y_{j_2}) < \cdots < \delta(y_{j_{k-2}}, y_{j_{k-1}}). 
    \end{align}    
    Since  
    $\{y_j:j\in J\}$ is a subset of the right comb $Y$, by Remark \ref{fact: combstructure} \ref{it: comb2R}, we have that: 
    $$\delta_{j_1} = \delta(y_{j_1}, y_{j_2}) < \cdots < \delta_{j_{k-2}}= \delta(y_{j_{k-2}}, y_{j_{k-1}})$$
    holds. 
\begin{figure}[t]
\centering
\begin{minipage}{0.48\textwidth}
  \centering
\tikzset{every picture/.style={line width=0.75pt}} 
\begin{tikzpicture}[x=0.75pt,y=0.75pt,yscale=-0.6,xscale=0.6]

\draw  [dash pattern={on 4.5pt off 4.5pt}]  (254.36,298.7) -- (519.07,298.7) ;
\draw    (252.1,68.35) -- (320.85,193.44) ;
\draw    (106.25,68) -- (254.36,298.7) ;
\draw    (399.37,68) -- (251.27,304.56) ;
\draw    (221.99,67.55) -- (305.58,218.43) ;
\draw  [fill={rgb, 255:red, 208; green, 2; blue, 27 }  ,fill opacity=1 ] (248.75,298.7) .. controls (248.75,295.46) and (251.26,292.84) .. (254.36,292.84) .. controls (257.45,292.84) and (259.97,295.46) .. (259.97,298.7) .. controls (259.97,301.93) and (257.45,304.56) .. (254.36,304.56) .. controls (251.26,304.56) and (248.75,301.93) .. (248.75,298.7) -- cycle ;
\draw  [fill={rgb, 255:red, 208; green, 2; blue, 27 }  ,fill opacity=1 ] (100.64,68) .. controls (100.64,64.77) and (103.16,62.14) .. (106.25,62.14) .. controls (109.35,62.14) and (111.86,64.77) .. (111.86,68) .. controls (111.86,71.24) and (109.35,73.86) .. (106.25,73.86) .. controls (103.16,73.86) and (100.64,71.24) .. (100.64,68) -- cycle ;
\draw  [fill={rgb, 255:red, 208; green, 2; blue, 27 }  ,fill opacity=1 ] (216.39,67.55) .. controls (216.39,64.32) and (218.9,61.69) .. (221.99,61.69) .. controls (225.09,61.69) and (227.6,64.32) .. (227.6,67.55) .. controls (227.6,70.79) and (225.09,73.41) .. (221.99,73.41) .. controls (218.9,73.41) and (216.39,70.79) .. (216.39,67.55) -- cycle ;
\draw  [fill={rgb, 255:red, 208; green, 2; blue, 27 }  ,fill opacity=1 ] (246.49,68.35) .. controls (246.49,65.12) and (249,62.49) .. (252.1,62.49) .. controls (255.19,62.49) and (257.71,65.12) .. (257.71,68.35) .. controls (257.71,71.59) and (255.19,74.21) .. (252.1,74.21) .. controls (249,74.21) and (246.49,71.59) .. (246.49,68.35) -- cycle ;
\draw  [fill={rgb, 255:red, 208; green, 2; blue, 52 }  ,fill opacity=1 ] (393.76,68) .. controls (393.76,64.77) and (396.27,62.14) .. (399.37,62.14) .. controls (402.47,62.14) and (404.98,64.77) .. (404.98,68) .. controls (404.98,71.24) and (402.47,73.86) .. (399.37,73.86) .. controls (396.27,73.86) and (393.76,71.24) .. (393.76,68) -- cycle ;
\draw    (321.68,67.63) -- (356.8,135.6) ;
\draw  [fill={rgb, 255:red, 208; green, 2; blue, 52 }  ,fill opacity=1 ] (316.07,67.63) .. controls (316.07,64.39) and (318.58,61.77) .. (321.68,61.77) .. controls (324.77,61.77) and (327.29,64.39) .. (327.29,67.63) .. controls (327.29,70.86) and (324.77,73.48) .. (321.68,73.48) .. controls (318.58,73.48) and (316.07,70.86) .. (316.07,67.63) -- cycle ;
\draw    (282.92,67.63) -- (336.95,167.5) ;
\draw  [fill={rgb, 255:red, 208; green, 2; blue, 27 }  ,fill opacity=1 ] (277.31,67.63) .. controls (277.31,64.39) and (279.82,61.77) .. (282.92,61.77) .. controls (286.02,61.77) and (288.53,64.39) .. (288.53,67.63) .. controls (288.53,70.86) and (286.02,73.48) .. (282.92,73.48) .. controls (279.82,73.48) and (277.31,70.86) .. (277.31,67.63) -- cycle ;
\draw    (137.68,68.03) .. controls (117.11,110.55) and (184.77,89.67) .. (170.46,168.37) ;
\draw  [fill={rgb, 255:red, 208; green, 2; blue, 52 }  ,fill opacity=1 ] (132.07,68.03) .. controls (132.07,64.79) and (134.58,62.17) .. (137.68,62.17) .. controls (140.78,62.17) and (143.29,64.79) .. (143.29,68.03) .. controls (143.29,71.26) and (140.78,73.89) .. (137.68,73.89) .. controls (134.58,73.89) and (132.07,71.26) .. (132.07,68.03) -- cycle ;
\draw    (518.17,56.84) -- (518.17,321.58) ;
\draw  [dash pattern={on 4.5pt off 4.5pt}]  (356.8,135.6) -- (519.07,135.6) ;
\draw  [dash pattern={on 4.5pt off 4.5pt}]  (336.95,167.5) -- (519.07,167.5) ;
\draw  [dash pattern={on 4.5pt off 4.5pt}]  (320.85,193.44) -- (518.29,193.44) ;
\draw  [dash pattern={on 4.5pt off 4.5pt}]  (305.58,218.43) -- (518.68,218.43) ;
\draw  [fill={rgb, 255:red, 65; green, 117; blue, 5 }  ,fill opacity=1 ] (515.37,135.6) .. controls (515.37,133.98) and (516.62,132.67) .. (518.17,132.67) .. controls (519.72,132.67) and (520.98,133.98) .. (520.98,135.6) .. controls (520.98,137.22) and (519.72,138.53) .. (518.17,138.53) .. controls (516.62,138.53) and (515.37,137.22) .. (515.37,135.6) -- cycle ;
\draw  [fill={rgb, 255:red, 65; green, 117; blue, 5 }  ,fill opacity=1 ] (515.37,167.5) .. controls (515.37,165.88) and (516.62,164.57) .. (518.17,164.57) .. controls (519.72,164.57) and (520.98,165.88) .. (520.98,167.5) .. controls (520.98,169.12) and (519.72,170.43) .. (518.17,170.43) .. controls (516.62,170.43) and (515.37,169.12) .. (515.37,167.5) -- cycle ;
\draw  [fill={rgb, 255:red, 65; green, 117; blue, 5 }  ,fill opacity=1 ] (515.37,193.93) .. controls (515.37,192.31) and (516.62,191) .. (518.17,191) .. controls (519.72,191) and (520.98,192.31) .. (520.98,193.93) .. controls (520.98,195.55) and (519.72,196.86) .. (518.17,196.86) .. controls (516.62,196.86) and (515.37,195.55) .. (515.37,193.93) -- cycle ;
\draw  [fill={rgb, 255:red, 65; green, 117; blue, 5 }  ,fill opacity=1 ] (515.88,218.43) .. controls (515.88,216.82) and (517.13,215.5) .. (518.68,215.5) .. controls (520.23,215.5) and (521.48,216.82) .. (521.48,218.43) .. controls (521.48,220.05) and (520.23,221.36) .. (518.68,221.36) .. controls (517.13,221.36) and (515.88,220.05) .. (515.88,218.43) -- cycle ;
\draw  [fill={rgb, 255:red, 65; green, 117; blue, 5 }  ,fill opacity=1 ] (515.75,299.27) .. controls (515.75,297.65) and (517.01,296.34) .. (518.56,296.34) .. controls (520.11,296.34) and (521.36,297.65) .. (521.36,299.27) .. controls (521.36,300.88) and (520.11,302.2) .. (518.56,302.2) .. controls (517.01,302.2) and (515.75,300.88) .. (515.75,299.27) -- cycle ;

\draw (252.42,306.8) node [anchor=north west][inner sep=0.75pt]  [xscale=0.8,yscale=0.8]  {$u_{Y}$};
\draw (96.06,32.06) node [anchor=north west][inner sep=0.75pt]  [font=\large,xscale=0.8,yscale=0.8]  {$y_{0}$};
\draw (212.8,32.71) node [anchor=north west][inner sep=0.75pt]  [font=\large,xscale=0.8,yscale=0.8]  {$y_{{1}}$};
\draw (243.39,31.97) node [anchor=north west][inner sep=0.75pt]  [font=\large,xscale=0.8,yscale=0.8]  {$y_{j_{2}{}}$};
\draw (390.12,31.61) node [anchor=north west][inner sep=0.75pt]  [font=\large,xscale=0.8,yscale=0.8]  {$ y_{j_{k-1}}=\max I$};
\draw (286.76,59.53) node [anchor=north west][inner sep=0.75pt]  [xscale=0.8,yscale=0.8]  {$\cdots $};
\draw (273.67,32.07) node [anchor=north west][inner sep=0.75pt]  [font=\large,xscale=0.8,yscale=0.8]  {$y_{j_{3}}$};
\draw (133.73,32.95) node [anchor=north west][inner sep=0.75pt]  [font=\large,xscale=0.8,yscale=0.8]  {$y_{I}$};
\draw (316.85,31.71) node [anchor=north west][inner sep=0.75pt]  [font=\large,xscale=0.8,yscale=0.8]  {$y_{j_{k-2}}$};
\draw (532,287.07) node [anchor=north west][inner sep=0.75pt]  [xscale=0.8,yscale=0.8]  {$\delta _{0}=\delta_{I'}$};
\draw (532,209.29) node [anchor=north west][inner sep=0.75pt]  [xscale=0.8,yscale=0.8]  {$\delta _{1}$};
\draw (532,182.97) node [anchor=north west][inner sep=0.75pt]  [xscale=0.8,yscale=0.8]  {$\delta _{j_2}$};
\draw (532,155.59) node [anchor=north west][inner sep=0.75pt]  [xscale=0.8,yscale=0.8]  {$\delta _{j_3}$};
\draw (532,123.4) node [anchor=north west][inner sep=0.75pt]  [xscale=0.8,yscale=0.8]  {$\delta _{j_{k-3}}$};

\end{tikzpicture}
\caption{The case $J=I$}
\label{fig: RightProjcaseA}
\end{minipage}\hfill
\begin{minipage}{0.48\textwidth}
  \centering

\tikzset{every picture/.style={line width=0.75pt}} 

\begin{tikzpicture}[x=0.75pt,y=0.75pt,yscale=-0.6,xscale=0.6]

\draw  [dash pattern={on 4.5pt off 4.5pt}]  (285,253.14) -- (519,253) ;
\draw  [dash pattern={on 4.5pt off 4.5pt}]  (254.36,298.7) -- (519.07,298.7) ;
\draw    (252.1,68.35) -- (320.85,193.44) ;
\draw    (106.25,68) -- (254.36,298.7) ;
\draw    (399.37,68) -- (251.27,304.56) ;
\draw    (221.99,67.55) -- (305.58,218.43) ;
\draw  [fill={rgb, 255:red, 208; green, 2; blue, 27 }  ,fill opacity=1 ] (248.75,298.7) .. controls (248.75,295.46) and (251.26,292.84) .. (254.36,292.84) .. controls (257.45,292.84) and (259.97,295.46) .. (259.97,298.7) .. controls (259.97,301.93) and (257.45,304.56) .. (254.36,304.56) .. controls (251.26,304.56) and (248.75,301.93) .. (248.75,298.7) -- cycle ;
\draw  [fill={rgb, 255:red, 208; green, 2; blue, 27 }  ,fill opacity=1 ] (100.64,68) .. controls (100.64,64.77) and (103.16,62.14) .. (106.25,62.14) .. controls (109.35,62.14) and (111.86,64.77) .. (111.86,68) .. controls (111.86,71.24) and (109.35,73.86) .. (106.25,73.86) .. controls (103.16,73.86) and (100.64,71.24) .. (100.64,68) -- cycle ;
\draw  [fill={rgb, 255:red, 208; green, 2; blue, 27 }  ,fill opacity=1 ] (216.39,67.55) .. controls (216.39,64.32) and (218.9,61.69) .. (221.99,61.69) .. controls (225.09,61.69) and (227.6,64.32) .. (227.6,67.55) .. controls (227.6,70.79) and (225.09,73.41) .. (221.99,73.41) .. controls (218.9,73.41) and (216.39,70.79) .. (216.39,67.55) -- cycle ;
\draw  [fill={rgb, 255:red, 208; green, 2; blue, 27 }  ,fill opacity=1 ] (246.49,68.35) .. controls (246.49,65.12) and (249,62.49) .. (252.1,62.49) .. controls (255.19,62.49) and (257.71,65.12) .. (257.71,68.35) .. controls (257.71,71.59) and (255.19,74.21) .. (252.1,74.21) .. controls (249,74.21) and (246.49,71.59) .. (246.49,68.35) -- cycle ;
\draw  [fill={rgb, 255:red, 208; green, 2; blue, 52 }  ,fill opacity=1 ] (393.76,68) .. controls (393.76,64.77) and (396.27,62.14) .. (399.37,62.14) .. controls (402.47,62.14) and (404.98,64.77) .. (404.98,68) .. controls (404.98,71.24) and (402.47,73.86) .. (399.37,73.86) .. controls (396.27,73.86) and (393.76,71.24) .. (393.76,68) -- cycle ;
\draw    (321.68,67.63) -- (356.8,135.6) ;
\draw  [fill={rgb, 255:red, 208; green, 2; blue, 52 }  ,fill opacity=1 ] (316.07,67.63) .. controls (316.07,64.39) and (318.58,61.77) .. (321.68,61.77) .. controls (324.77,61.77) and (327.29,64.39) .. (327.29,67.63) .. controls (327.29,70.86) and (324.77,73.48) .. (321.68,73.48) .. controls (318.58,73.48) and (316.07,70.86) .. (316.07,67.63) -- cycle ;
\draw    (282.92,67.63) -- (336.95,167.5) ;
\draw  [fill={rgb, 255:red, 208; green, 2; blue, 27 }  ,fill opacity=1 ] (277.31,67.63) .. controls (277.31,64.39) and (279.82,61.77) .. (282.92,61.77) .. controls (286.02,61.77) and (288.53,64.39) .. (288.53,67.63) .. controls (288.53,70.86) and (286.02,73.48) .. (282.92,73.48) .. controls (279.82,73.48) and (277.31,70.86) .. (277.31,67.63) -- cycle ;
\draw    (137.68,68.03) .. controls (117.11,110.55) and (296.31,179.3) .. (282,258) ;
\draw  [fill={rgb, 255:red, 208; green, 2; blue, 52 }  ,fill opacity=1 ] (132.07,68.03) .. controls (132.07,64.79) and (134.58,62.17) .. (137.68,62.17) .. controls (140.78,62.17) and (143.29,64.79) .. (143.29,68.03) .. controls (143.29,71.26) and (140.78,73.89) .. (137.68,73.89) .. controls (134.58,73.89) and (132.07,71.26) .. (132.07,68.03) -- cycle ;
\draw    (518.17,56.84) -- (518.17,321.58) ;
\draw  [dash pattern={on 4.5pt off 4.5pt}]  (356.8,135.6) -- (519.07,135.6) ;
\draw  [dash pattern={on 4.5pt off 4.5pt}]  (336.95,167.5) -- (519.07,167.5) ;
\draw  [dash pattern={on 4.5pt off 4.5pt}]  (320.85,193.44) -- (518.29,193.44) ;
\draw  [dash pattern={on 4.5pt off 4.5pt}]  (305.58,218.43) -- (518.68,218.43) ;
\draw  [fill={rgb, 255:red, 65; green, 117; blue, 5 }  ,fill opacity=1 ] (515.37,135.6) .. controls (515.37,133.98) and (516.62,132.67) .. (518.17,132.67) .. controls (519.72,132.67) and (520.98,133.98) .. (520.98,135.6) .. controls (520.98,137.22) and (519.72,138.53) .. (518.17,138.53) .. controls (516.62,138.53) and (515.37,137.22) .. (515.37,135.6) -- cycle ;
\draw  [fill={rgb, 255:red, 65; green, 117; blue, 5 }  ,fill opacity=1 ] (515.37,167.5) .. controls (515.37,165.88) and (516.62,164.57) .. (518.17,164.57) .. controls (519.72,164.57) and (520.98,165.88) .. (520.98,167.5) .. controls (520.98,169.12) and (519.72,170.43) .. (518.17,170.43) .. controls (516.62,170.43) and (515.37,169.12) .. (515.37,167.5) -- cycle ;
\draw  [fill={rgb, 255:red, 65; green, 117; blue, 5 }  ,fill opacity=1 ] (515.37,193.93) .. controls (515.37,192.31) and (516.62,191) .. (518.17,191) .. controls (519.72,191) and (520.98,192.31) .. (520.98,193.93) .. controls (520.98,195.55) and (519.72,196.86) .. (518.17,196.86) .. controls (516.62,196.86) and (515.37,195.55) .. (515.37,193.93) -- cycle ;
\draw  [fill={rgb, 255:red, 65; green, 117; blue, 5 }  ,fill opacity=1 ] (515.88,218.43) .. controls (515.88,216.82) and (517.13,215.5) .. (518.68,215.5) .. controls (520.23,215.5) and (521.48,216.82) .. (521.48,218.43) .. controls (521.48,220.05) and (520.23,221.36) .. (518.68,221.36) .. controls (517.13,221.36) and (515.88,220.05) .. (515.88,218.43) -- cycle ;
\draw  [fill={rgb, 255:red, 65; green, 117; blue, 5 }  ,fill opacity=1 ] (515.75,299.27) .. controls (515.75,297.65) and (517.01,296.34) .. (518.56,296.34) .. controls (520.11,296.34) and (521.36,297.65) .. (521.36,299.27) .. controls (521.36,300.88) and (520.11,302.2) .. (518.56,302.2) .. controls (517.01,302.2) and (515.75,300.88) .. (515.75,299.27) -- cycle ;
\draw  [fill={rgb, 255:red, 208; green, 2; blue, 27 }  ,fill opacity=1 ] (279.39,253.14) .. controls (279.39,249.9) and (281.9,247.28) .. (285,247.28) .. controls (288.1,247.28) and (290.61,249.9) .. (290.61,253.14) .. controls (290.61,256.38) and (288.1,259) .. (285,259) .. controls (281.9,259) and (279.39,256.38) .. (279.39,253.14) -- cycle ;
\draw  [fill={rgb, 255:red, 65; green, 117; blue, 5 }  ,fill opacity=1 ] (516.2,253) .. controls (516.2,251.38) and (517.45,250.07) .. (519,250.07) .. controls (520.55,250.07) and (521.8,251.38) .. (521.8,253) .. controls (521.8,254.62) and (520.55,255.93) .. (519,255.93) .. controls (517.45,255.93) and (516.2,254.62) .. (516.2,253) -- cycle ;

\draw (252.42,306.8) node [anchor=north west][inner sep=0.75pt]  [xscale=0.8,yscale=0.8]  {$u_{Y}$};
\draw (96.06,32.06) node [anchor=north west][inner sep=0.75pt]  [font=\large,xscale=0.8,yscale=0.8]  {$y_{0}$};
\draw (212.8,32.71) node [anchor=north west][inner sep=0.75pt]  [font=\large,xscale=0.8,yscale=0.8]  {$y_{1}$};
\draw (243.39,31.97) node [anchor=north west][inner sep=0.75pt]  [font=\large,xscale=0.8,yscale=0.8]  {$y_{j_{1}{}}$};
\draw (390.12,31.97) node [anchor=north west][inner sep=0.75pt]  [font=\large,xscale=0.8,yscale=0.8]  {$y_{\ell } = \max J$};
\draw (286.76,59.53) node [anchor=north west][inner sep=0.75pt]  [xscale=0.8,yscale=0.8]  {$\cdots $};
\draw (273.67,32.07) node [anchor=north west][inner sep=0.75pt]  [font=\large,xscale=0.8,yscale=0.8]  {$y_{j_{2}}$};
\draw (133.73,32.95) node [anchor=north west][inner sep=0.75pt]  [font=\large,xscale=0.8,yscale=0.8]  {$y_{J}$};
\draw (316.85,31.71) node [anchor=north west][inner sep=0.75pt]  [font=\large,xscale=0.8,yscale=0.8]  {$y_{j_{k-2}}$};
\draw (532,287.07) node [anchor=north west][inner sep=0.75pt]  [xscale=0.8,yscale=0.8]  {$\delta _{0} =\delta _{I'}$};
\draw (532,209.29) node [anchor=north west][inner sep=0.75pt]  [xscale=0.8,yscale=0.8]  {$\delta _{j_1}$};
\draw (532,182.97) node [anchor=north west][inner sep=0.75pt]  [xscale=0.8,yscale=0.8]  {$\delta _{j_2}$};
\draw (532,155.59) node [anchor=north west][inner sep=0.75pt]  [xscale=0.8,yscale=0.8]  {$\delta _{j_3}$};
\draw (532,123.4) node [anchor=north west][inner sep=0.75pt]  [xscale=0.8,yscale=0.8]  {$\delta _{j_{k -2}}$};
\draw (295,256.54) node [anchor=north west][inner sep=0.75pt]  [xscale=0.8,yscale=0.8]  {$u_{J}$};
\draw (532,240.29) node [anchor=north west][inner sep=0.75pt]  [xscale=0.8,yscale=0.8]  {$\delta _{J'}$};
\end{tikzpicture}
\caption{The case $J\neq I$.}
\label{fig: RightProjcaseB}
\end{minipage}
\end{figure}
To show that (\ref{eqn: rightcombeqn1}) holds, it will be sufficient to show that
     \begin{align}
        \label{eqn: right2}
        \delta(y_{J},y_{j_1})<\delta(y_{j_1},y_{j_2}) = \delta_{j_1}.
    \end{align} 
    To this end, we consider the cases $J= I$ and $J\ne I$ separately. 
    \\
    In the first case, i.e., $J = I =\{j_1< \dots < j_{k-1}\}$  (see Figure \ref{fig: RightProjcaseA}), since $1\in I$, we have $j_1 = 1$, and $\delta_1 = \delta_{j_1}$. Note that by \ref{it: rightc}, $\delta(y_I, y_1)=\delta_0$. In view of (\ref{eqn: deltasrightproj}), $\delta_0 < \delta_{1}$ and thus (\ref{eqn: right2}) holds. 
    \\
    On the other hand, if $J\neq I$ (see Figure \ref{fig: RightProjcaseB}), then $J'\in \{2,\dots,\ell-1\}^{(k-2)}$ and thus $j_1\ge 2$. Note that $\delta(y_J,y_{j_1})=\min\{\delta(y_J,y_1),\delta(y_1,y_{j_1})=\delta_1\}\le \delta_{1}.$ Since $j_1>1$, in view of (\ref{eqn: deltasrightproj}), this implies that $\delta(y_J,y_{j_1})\le\delta_{1}<\delta_{j_1},$ and consequently (\ref{eqn: right2}) holds. 

   To summarize, we have shown that in either case $Y^J$ satisfies (\ref{eqn: rightcombeqn1}) and hence forms a right comb. By \ref{it: comb2L} in Remark \ref{fact: combstructure}, $Y^J$ has projection (see also Fig \ref{fig: RightProjcaseA} and \ref{fig: RightProjcaseB}):  
    \begin{align}
        \label{eqn: right1}
        \pi(Y^J)=\{\delta_{J'}\}\cup \set{\delta_j:j\in J'},
    \end{align}
   for every $J'\in \{2,\dots, \ell-1\}^{(k-2)}\cup \{I'\}.$  Using the definition of $\chi_c$ in (\ref{eqn: coloring4}), and noting that $\chi_c(Y^{J})=\alpha$, this shows that $$ \chi_c(Y^J)= c(\pi(Y^J))= c\left(\set{\delta_{J'}}\cup\set{\delta_i:i\in J'}\right) = \alpha,$$ 
    for every $J'\in \{2,\dots, \ell-1\}^{(k-2)}\cup \{I'\}.$ 
    This concludes the proof that $c^{-1}(\alpha)$ contains a monochromatic copy of $(F,<)$. 
\end{proof}
\subsubsection{Case analysis of leaf sets and proof of Theorem \ref{thm: steppingup}}
\label{sec: caseanalysis}
Formally, the proof of Theorem \ref{thm: steppingup} follows by induction and so we first state and prove the base case, i.e., the case of $3$-uniform hypergraphs. This is Proposition \ref{prop: basecase} below. Proposition \ref{prop: inductionstep} consists of the induction step for $k \geq 4$. 
\begin{proposition}[The case $k = 3$]
    \label{prop: basecase}
    For all positive integers $N\geq n > t$, if the coloring $c:[N]^{(2)}\to \{0,1\}$ does not contain a monochromatic $K_t^{(2)}$, then $\chi_c:[2^N]^{(3)} \to \Z_4$ 
    \begin{enumerate}[label = (\alph*)]
        \item\label{it: basecasea}  does not contain  a monochromatic copy of any member of $\cF_{I}^{(3)}(n)$ in the colors $\{0,1\}$, and 
        \item\label{it: basecaseb}  does not contain a monochromatic copy of  any member of $\rev \cF_{I}^{(3)}(n)$ in the colors $\{2,3\}$, 
    \end{enumerate}
where $I = \{1,2\}.$
\end{proposition}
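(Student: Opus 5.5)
The plan for \ref{it: basecasea} is to show that a monochromatic copy in color $\{0,1\}$ forces the distinguished set of vertices to form a left comb. Then Proposition \ref{prop: leftcomb} yields a monochromatic clique in $c$ of size $n-1\ge t$, contradicting the hypothesis. Part \ref{it: basecaseb} will then follow from \ref{it: basecasea} by a reflection symmetry of the tree.

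\emph{Step 1 (all edges are left combs).} Suppose $(F,<)\in\cF_I^{(3)}(n)$, with $I=\{1,2\}$, is monochromatic in color $\alpha\in\{0,1\}$ under $\chi_c$. Every $3$-set is either a left comb or a right comb, and by (\ref{eqn: coloring3}) right combs receive colors in $\{2,3\}$. Hence every edge of $F$ is a left comb, namely the special edge $\{x_0,x_1,x_2\}$ and each $\{x_J,x_i,x_j\}$ with $J=\{i<j\}\subseteq\{2,\dots,n\}$.

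\emph{Step 2 (the distinguished set is a left comb).} Set $Y=\{x_0<x_1<\dots<x_n\}$. It suffices to show that $\delta(x_{i-1},x_i)>\delta(x_i,x_{i+1})$ for all $1\le i\le n-1$.
\begin{itemize}
\item For $i=1$, this is exactly the statement that the special edge is a left comb.
\item For $i\ge 2$, take $J=\{i,i+1\}$. Since $\{x_J,x_i,x_{i+1}\}$ is a left comb, $\delta(x_J,x_i)>\delta(x_i,x_{i+1})$.
\item Because $x_J\le x_1\le x_{i-1}<x_i$, (\ref{eqn: mindelta}) gives $\delta(x_J,x_i)=\min\{\delta(x_J,x_{i-1}),\delta(x_{i-1},x_i)\}\le\delta(x_{i-1},x_i)$. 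When $x_J=x_{i-1}$ this holds trivially.
\end{itemize}
Combining the last two inequalities gives the required strict decrease, so $Y$ is a left comb.

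\emph{Step 3 (apply the projection).} With $\ell=n$, $k=3$, $y_i=x_i$ and $y_J=x_J$, the hypotheses \ref{it: leftlema}--\ref{it: leftlemmac} of Proposition \ref{prop: leftcomb} hold. It then gives that $\pi(Y\setminus\{x_0\})$ is a set of $n-1\ge t$ distinct levels whose pairs are all monochromatic under $c$. This is a monochromatic $K_t^{(2)}$, a contradiction.

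\emph{Step 4 (reversed families).} For \ref{it: basecaseb}, consider the reflection $\phi(x)=2^N+1-x$ of the leaves. It reverses the order and preserves every $\delta(x,y)$. Consequently it interchanges left and right combs while preserving $\pi(X)$, and therefore $\chi_c(\phi(X))=3-\chi_c(X)$. A monochromatic copy of some $(F,<_{\rev})\in\rev\cF_I^{(3)}(n)$ in a color from $\{2,3\}$ is mapped by $\phi$ to a monochromatic copy of $(F,<)\in\cF_I^{(3)}(n)$ in a color from $\{0,1\}$, which is excluded by \ref{it: basecasea}.

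The only delicate point is Step 2: one must use the inequality $x_J\le x_1$ together with (\ref{eqn: mindelta}) to transfer the comb condition from the auxiliary vertex $x_J$ to the consecutive vertex $x_{i-1}$.
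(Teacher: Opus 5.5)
Your proof is correct and follows the paper's argument. Color $\alpha\in\{0,1\}$ forces every edge to be a left comb. The edges $\{x_J,x_i,x_{i+1}\}$ together with (\ref{eqn: mindelta}) then force $\{x_0<\dots<x_n\}$ to be a left comb, and Proposition \ref{prop: leftcomb} gives a monochromatic $K_{n-1}^{(2)}$. Your Step 2 is a direct, non-inductive write-up of the paper's ``proof by picture''. For part (b), where the paper only says the proof is analogous, you use explicitly the same reversing map $\theta$ that the paper uses in the $k\ge 4$ case.
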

\begin{proof}
    We will only prove \ref{it: basecasea}, since the proof of \ref{it: basecaseb} is analogous. The proof is by contradiction and to this end let us assume, without loss of generality, that $\chi_c$ contains a monochromatic copy of $(F,<)\in \cF_{I}^{(3)}(n)$ in color 0. Recall that, in view of Definition \ref{def:F_I}, the vertex set $V(F)$ is given by 
    $$V(F)=\{x_0,x_1,\ldots,x_n\}\cup\set{x_J:J\in\{2,\ldots,n\}^{(2)}}.$$ 
    The ordering of $V(F)$ satisfies $x_0<x_1<x_2<\cdots<x_n$ and $x_0\le x_J\le x_1$ for all $J\in [n]^{(2)}$. Let the edge set be 
    \begin{align}
    \label{eqn: Finbasecase}
        F= \set{x_0,x_1,x_2}\cup \set{\set{x_J}\cup\set{x_j:j\in J}:J\in\{2,\ldots,n\}^{(2)}}.
    \end{align}
    Since the copy of $(F,<)$ is monochromatic of color $0$, in view of (\ref{eqn: coloring3}), every edge of $F$ forms a left comb. As a consequence, we have the following claim. 
\begin{claim*}
        The set $X = \set{x_0<x_1<\cdots<x_n}$ forms a left comb. 
    \end{claim*}
    \begin{proof}
    
    Note that the special edge  $\{x_0 < x_1< x_2\}\in F$ forms a left comb since it has color 0. Let $$\delta_i:= \delta(x_{i-1}, x_i) \text{ for each } 1\le i \le n.$$
    We prove the statement by induction.  Assume that $\{x_0 < x_1 < \cdots < x_\ell\}$ forms a left comb for some integer $2\le \l < n$, or in other words, we have  
    $\delta_1 > \delta_2  > \cdots > \delta_{\l}.$
    We will show that $\{x_0 < \cdots < x_{\l+1}\}$ forms a left comb as well, or in other words, that $\delta_{\ell+1}< \delta_\ell.$
    We omit the standard formal proof of the induction step and instead provide a proof by picture. 
    \begin{figure}[h!]
    \centering
\tikzset{every picture/.style={line width=0.75pt}} 

\begin{tikzpicture}[x=0.75pt,y=0.75pt,yscale=-0.7,xscale=0.7]

\draw    (251.95,126.53) -- (146.9,267.28) ;
\draw    (38.97,125.4) -- (166.36,293.3) ;
\draw    (291.08,125.4) -- (163.7,297.56) ;
\draw    (210.7,126.39) -- (125.83,240.68) ;
\draw    (171.33,125.36) -- (104.19,212.43) ;
\draw    (133.7,125.07) -- (86.71,188.1) ;
\draw  [fill={rgb, 255:red, 208; green, 2; blue, 27 }  ,fill opacity=1 ] (161.53,293.3) .. controls (161.53,290.94) and (163.69,289.03) .. (166.36,289.03) .. controls (169.02,289.03) and (171.18,290.94) .. (171.18,293.3) .. controls (171.18,295.65) and (169.02,297.56) .. (166.36,297.56) .. controls (163.69,297.56) and (161.53,295.65) .. (161.53,293.3) -- cycle ;
\draw  [fill={rgb, 255:red, 208; green, 2; blue, 27 }  ,fill opacity=1 ] (34.15,125.4) .. controls (34.15,123.04) and (36.31,121.13) .. (38.97,121.13) .. controls (41.64,121.13) and (43.8,123.04) .. (43.8,125.4) .. controls (43.8,127.75) and (41.64,129.66) .. (38.97,129.66) .. controls (36.31,129.66) and (34.15,127.75) .. (34.15,125.4) -- cycle ;
\draw  [fill={rgb, 255:red, 208; green, 2; blue, 27 }  ,fill opacity=1 ] (128.87,125.07) .. controls (128.87,122.72) and (131.03,120.81) .. (133.7,120.81) .. controls (136.36,120.81) and (138.52,122.72) .. (138.52,125.07) .. controls (138.52,127.43) and (136.36,129.34) .. (133.7,129.34) .. controls (131.03,129.34) and (128.87,127.43) .. (128.87,125.07) -- cycle ;
\draw  [fill={rgb, 255:red, 208; green, 2; blue, 27 }  ,fill opacity=1 ] (166.51,125.36) .. controls (166.51,123.01) and (168.67,121.1) .. (171.33,121.1) .. controls (174,121.1) and (176.16,123.01) .. (176.16,125.36) .. controls (176.16,127.72) and (174,129.63) .. (171.33,129.63) .. controls (168.67,129.63) and (166.51,127.72) .. (166.51,125.36) -- cycle ;
\draw  [fill={rgb, 255:red, 208; green, 2; blue, 27 }  ,fill opacity=1 ] (205.88,126.39) .. controls (205.88,124.04) and (208.04,122.13) .. (210.7,122.13) .. controls (213.37,122.13) and (215.53,124.04) .. (215.53,126.39) .. controls (215.53,128.75) and (213.37,130.66) .. (210.7,130.66) .. controls (208.04,130.66) and (205.88,128.75) .. (205.88,126.39) -- cycle ;
\draw  [fill={rgb, 255:red, 208; green, 2; blue, 27 }  ,fill opacity=1 ] (247.12,126.53) .. controls (247.12,124.17) and (249.28,122.27) .. (251.95,122.27) .. controls (254.61,122.27) and (256.77,124.17) .. (256.77,126.53) .. controls (256.77,128.89) and (254.61,130.79) .. (251.95,130.79) .. controls (249.28,130.79) and (247.12,128.89) .. (247.12,126.53) -- cycle ;
\draw  [fill={rgb, 255:red, 65; green, 117; blue, 5 }  ,fill opacity=1 ] (286.26,125.4) .. controls (286.26,123.04) and (288.42,121.13) .. (291.08,121.13) .. controls (293.74,121.13) and (295.9,123.04) .. (295.9,125.4) .. controls (295.9,127.75) and (293.74,129.66) .. (291.08,129.66) .. controls (288.42,129.66) and (286.26,127.75) .. (286.26,125.4) -- cycle ;
\draw [color={rgb, 255:red, 14; green, 18; blue, 193 }  ,draw opacity=1 ] [dash pattern={on 4.5pt off 4.5pt}]  (264.27,161.33) .. controls (279.26,167.61) and (290.91,152.7) .. (325.04,124.45) ;
\draw [color={rgb, 255:red, 208; green, 2; blue, 27 }  ,draw opacity=1 ] [dash pattern={on 4.5pt off 4.5pt}]  (325.04,124.45) -- (188.52,323) ;
\draw [color={rgb, 255:red, 208; green, 2; blue, 27 }  ,draw opacity=1 ] [dash pattern={on 4.5pt off 4.5pt}]  (166.36,293.3) -- (188.52,323) ;
\draw  [fill={rgb, 255:red, 65; green, 117; blue, 5 }  ,fill opacity=1 ] (320.21,124.45) .. controls (320.21,122.09) and (322.37,120.18) .. (325.04,120.18) .. controls (327.7,120.18) and (329.86,122.09) .. (329.86,124.45) .. controls (329.86,126.8) and (327.7,128.71) .. (325.04,128.71) .. controls (322.37,128.71) and (320.21,126.8) .. (320.21,124.45) -- cycle ;
\draw    (81.26,125.86) .. controls (69.49,157.41) and (97.79,146.42) .. (106.95,161.33) ;
\draw  [fill={rgb, 255:red, 65; green, 117; blue, 5 }  ,fill opacity=1 ] (76.43,125.86) .. controls (76.43,123.5) and (78.59,121.59) .. (81.26,121.59) .. controls (83.92,121.59) and (86.08,123.5) .. (86.08,125.86) .. controls (86.08,128.21) and (83.92,130.12) .. (81.26,130.12) .. controls (78.59,130.12) and (76.43,128.21) .. (76.43,125.86) -- cycle ;
\draw  [fill={rgb, 255:red, 208; green, 2; blue, 27 }  ,fill opacity=1 ] (259.45,161.33) .. controls (259.45,158.98) and (261.61,157.07) .. (264.27,157.07) .. controls (266.94,157.07) and (269.1,158.98) .. (269.1,161.33) .. controls (269.1,163.69) and (266.94,165.6) .. (264.27,165.6) .. controls (261.61,165.6) and (259.45,163.69) .. (259.45,161.33) -- cycle ;
\draw    (452.55,182.81) -- (515.48,271.61) ;
\draw    (576.52,183.5) -- (515.48,271.61) ;
\draw    (518.8,181.38) -- (549.34,223.41) ;
\draw  [fill={rgb, 255:red, 208; green, 2; blue, 27 }  ,fill opacity=1 ] (513.41,271.61) .. controls (513.41,270.35) and (514.34,269.33) .. (515.48,269.33) .. controls (516.63,269.33) and (517.56,270.35) .. (517.56,271.61) .. controls (517.56,272.87) and (516.63,273.89) .. (515.48,273.89) .. controls (514.34,273.89) and (513.41,272.87) .. (513.41,271.61) -- cycle ;
\draw  [fill={rgb, 255:red, 65; green, 117; blue, 5 }  ,fill opacity=1 ] (450.47,182.81) .. controls (450.47,181.55) and (451.4,180.53) .. (452.55,180.53) .. controls (453.69,180.53) and (454.62,181.55) .. (454.62,182.81) .. controls (454.62,184.06) and (453.69,185.08) .. (452.55,185.08) .. controls (451.4,185.08) and (450.47,184.06) .. (450.47,182.81) -- cycle ;
\draw  [fill={rgb, 255:red, 65; green, 117; blue, 5 }  ,fill opacity=1 ] (516.73,181.38) .. controls (516.73,180.12) and (517.66,179.1) .. (518.8,179.1) .. controls (519.95,179.1) and (520.88,180.12) .. (520.88,181.38) .. controls (520.88,182.63) and (519.95,183.65) .. (518.8,183.65) .. controls (517.66,183.65) and (516.73,182.63) .. (516.73,181.38) -- cycle ;
\draw  [fill={rgb, 255:red, 65; green, 117; blue, 5 }  ,fill opacity=1 ] (575.7,181.22) .. controls (575.7,179.96) and (576.63,178.94) .. (577.78,178.94) .. controls (578.93,178.94) and (579.86,179.96) .. (579.86,181.22) .. controls (579.86,182.48) and (578.93,183.5) .. (577.78,183.5) .. controls (576.63,183.5) and (575.7,182.48) .. (575.7,181.22) -- cycle ;
\draw  [fill={rgb, 255:red, 208; green, 2; blue, 27 }  ,fill opacity=1 ] (547.26,223.41) .. controls (547.26,222.15) and (548.19,221.14) .. (549.34,221.14) .. controls (550.48,221.14) and (551.42,222.15) .. (551.42,223.41) .. controls (551.42,224.67) and (550.48,225.69) .. (549.34,225.69) .. controls (548.19,225.69) and (547.26,224.67) .. (547.26,223.41) -- cycle ;
\draw    (291,220) -- (430,220.99) ;
\draw [shift={(432,221)}, rotate = 180.41] [color={rgb, 255:red, 0; green, 0; blue, 0 }  ][line width=0.75]    (10.93,-3.29) .. controls (6.95,-1.4) and (3.31,-0.3) .. (0,0) .. controls (3.31,0.3) and (6.95,1.4) .. (10.93,3.29)   ;

\draw (176.72,289.27) node [anchor=north west][inner sep=0.75pt]  [xscale=0.8,yscale=0.8]  {$u$};
\draw (31.53,100.57) node [anchor=north west][inner sep=0.75pt]  [xscale=0.8,yscale=0.8]  {$x_{0}$};
\draw (127.25,101.36) node [anchor=north west][inner sep=0.75pt]  [xscale=0.8,yscale=0.8]  {$x_{1}$};
\draw (163.88,102.14) node [anchor=north west][inner sep=0.75pt]  [xscale=0.8,yscale=0.8]  {$x_{2}$};
\draw (200.5,102.14) node [anchor=north west][inner sep=0.75pt]  [xscale=0.8,yscale=0.8]  {$x_{3}$};
\draw (234.04,102.14) node [anchor=north west][inner sep=0.75pt]  [xscale=0.8,yscale=0.8]  {$x_{\ell -1}$};
\draw (284.32,102.93) node [anchor=north west][inner sep=0.75pt]  [xscale=0.8,yscale=0.8]  {$x_{\ell }$};
\draw (192.11,155.14) node [anchor=north west][inner sep=0.75pt]  [rotate=-42.24,xscale=0.8,yscale=0.8]  {$\cdots $};
\draw (328.1,103.71) node [anchor=north west][inner sep=0.75pt]  [xscale=0.8,yscale=0.8]  {$x_{\ell +1}$};
\draw (74.31,102.14) node [anchor=north west][inner sep=0.75pt]  [xscale=0.8,yscale=0.8]  {$x_{J}{}$};
\draw (507.72,275.27) node [anchor=north west][inner sep=0.75pt]  [xscale=0.8,yscale=0.8]  {$u$};
\draw (441.72,157.27) node [anchor=north west][inner sep=0.75pt]  [xscale=0.8,yscale=0.8]  {$x_{J}$};
\draw (509.72,156.27) node [anchor=north west][inner sep=0.75pt]  [xscale=0.8,yscale=0.8]  {$x_{\ell }$};
\draw (569.72,155.27) node [anchor=north west][inner sep=0.75pt]  [xscale=0.8,yscale=0.8]  {$x_{\ell +1}$};
\draw (302,196.4) node [anchor=north west][inner sep=0.75pt]  [color={rgb, 255:red, 14; green, 18; blue, 193 }  ,opacity=1 ,xscale=0.8,yscale=0.8]  {$\delta _{\ell +1}  >\delta _{\ell } \ \text{implies}$};
\draw (66,180.4) node [anchor=north west][inner sep=0.75pt]  [xscale=0.8,yscale=0.8]  {$\delta _{1}$};
\draw (86,205.4) node [anchor=north west][inner sep=0.75pt]  [xscale=0.8,yscale=0.8]  {$\delta _{2}$};
\draw (108,233.4) node [anchor=north west][inner sep=0.75pt]  [xscale=0.8,yscale=0.8]  {$\delta _{3}$};
\draw (143,283.4) node [anchor=north west][inner sep=0.75pt]  [xscale=0.8,yscale=0.8]  {$\delta _{\ell }$};
\end{tikzpicture}
  \caption{
    There are only two possibilities for the vertex $x_{\l+1}$ where the red case corresponds to $\delta_\l > \delta_{\l+1}$ and the blue case corresponds to $\delta_{\l+1}< \delta_\l$. }
    \label{fig: subcomb}
\end{figure}
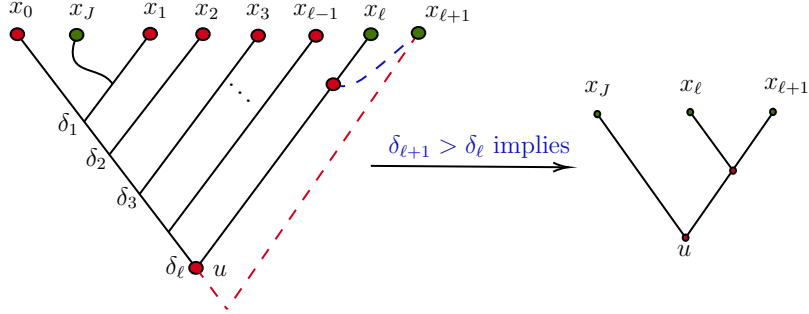
    
    As shown in Figure \ref{fig: subcomb}, note that $x_{\ell+1}$ has only two possibilities -- either $\delta_{\ell+1}<\delta_\ell$ (red case) or $\delta_{\ell+1}> \delta_\ell$ (blue case). Let $J=\set{\ell,\ell+1}$. From Figure \ref{fig: subcomb}, it is clear that in the blue case, i.e., if  $\delta(x_\ell,x_{\ell+1}) = \delta_{\ell+1} > \delta_\l$,  then $\{x_J, x_\ell, x_{\ell+1}\}$ forms a right comb, which is a contradiction, since $\{x_J, x_{\ell}, x_{\ell+1}\}$ is an edge of color 0. Consequently, the red case holds, i.e., $\delta_{\ell} > \delta_{\ell+1}$, and $\{x_0, x_1,\dots, x_{\ell+1}\}$ forms a left comb.  
    \end{proof}
    Having obtained the left comb $X$, we apply Proposition \ref{prop: leftcomb} to $X=\set{x_0<\cdots<x_n}$ with the coloring $c:[N]^{(2)}\to \set{0,1}$. Condition \ref{it: leftlema} is satisfied, since $X$ forms a left comb. Condition \ref{it: leftlemb} is satisfied due to (\ref{eqn: Finbasecase}). Condition \ref{it: leftlemmac} also holds, since $F$ in (\ref{eqn: Finbasecase}) is monochromatic. Consequently, Proposition \ref{prop: leftcomb} implies that $\pi(X\setminus \set{x_0})^{(2)}$ is a monochromatic clique of size $n-1\ge t$. This, however, contradicts the assumption that $c$ does not contain a monochromatic copy of $K_t^{(2)}$, and consequently \ref{it: basecasea} holds. 
\end{proof}
Note that Proposition \ref{prop: basecase}, together with the exponential lower bound on $R(K_t^{(2)};2)$ in \cite{Erd47} proves Theorem \ref{thm: steppingup} for $k = 3$ (we will prove this formally at the end of this section). 

We will now state and prove Proposition \ref{prop: inductionstep}, which we use to prove Theorem \ref{thm: steppingup} for $k\ge 4$. 

Recall that in Definition \ref{def:F_I}, for a set $1\in I\in [n]^{(k-1)}$, we define the family of ordered $k$-graphs  $\cF_I^{(k)}(n)$. An ordered $k$-graph $(F,<)\in \cF_I^{(k)}(n)$ has vertex set $$V(F) = \{x_0,x_1,\dots, x_n\}\cup \{x_J:J\in \{2,\dots,n\}^{(k-1)}\cup\{I\}\},$$ satisfying the linear order:
$$x_0=x_I\le \{x_J:J\in \{2,\dots,n\}^{(k-1)}\} \le x_1 < x_2<\dots< x_n,$$
with \textit{distinguished  set of vertices} $X:=\{x_0,\ldots,x_n\}$. The edge $X^I := \{x_0\}\cup\{x_i\}_{i\in I}$ is called the \textit{special edge}.  
Recall that the ordered $k$-graph $(F,<_{\rev})$ was defined to be the hypergraph with the vertices of $F$ in the ``reverse order'', and $\rev \cF_I^{(k)}$ was the collection of $(F, <_{\rev})$ where $(F,<)\in \cF_I^{(k)}(n)$. 

Let $t= n/8k$ and $I\in [n]^{(k-1)}$ and $I'\in [t]^{(k-2)}$ be conveniently chosen. In Proposition \ref{prop: inductionstep} we will show that if the projection coloring $c:[N]^{(k-1)}\to \ZZ_4$ avoids all members of $\cF_{I'}^{(k-1)}(t)$ in colors $\{0,1\}$ and all members of $\rev \cF_{I'}^{(k-1)}(t)$ in colors $\{2,3\}$, then the coloring $\chi_c:[2^N]^{(k)}\to \ZZ_4$ avoids all members of  $\cF_{I}^{(k)}(n)$ in colors $\{0,1\}$ and all members of $\rev \cF_{I}^{(k)}(n)$ in colors $\{2,3\}$. To this end, we will fix $I\in [n]^{(k-1)}$, where
$$I = \{1,2,a_{3}, \dots, a_{k-1}\}.$$
However, we will need $I$ to satisfy an additional property, which we briefly motivate. Assuming that there is a monochromatic copy of $(F,<)\in \cF_I^{(k)}(n)$ in the leaf set, our plan is to find a subset $Y\subseteq V(F)$ that forms a comb (left or right), such that $x_{a_i}\le Y \le x_{a_{i+1}}$ for some $i\in \{2,\dots, k-2\}$. To ensure that $Y$ is ``large'' (of the order of size $t$), we will require the set $I$ to be ``separated'', i.e., that $(a_{i+1}- a_i)> t$ for every $i\in \{2,\dots, k-2\}$. To make sure that the induction goes through, the forbidden monochromatic copy $(F',<)\in \cF_{I'}^{(k-1)}(t)$ in the projection must also belong to a family $\cF_{I'}^{(k-1)}(t)$ where $I'$ is ``separated''. 
\begin{definition}[Separated set]
\label{def: separated}
    Given positive integers $n \geq k \geq 3$ and a subset $I\in [n]^{(k-1)}$, we say that $I$ is  $(n,k)$-\textit{separated} if $I = \{a_1 <a_2 < a_3, \dots < a_{k-1}\}$ satisfies: 
    $$a_1 = 1, a_2 = 2 \text{ and } (a_{i+1}-a_i) \geq \frac{n}{2k} \text{ for every $2\le i\le k-1$ where $a_k := n$.}\footnote{In the future, for $I\in [n]^{(k-1)}$ which is \textit{$(n,k)$-separated}, we will just say $I\in [n]^{(k-1)}$ is separated, and $(n,k)$ will be clear from the size of ground set and size of $I$.}$$
\end{definition}
We remark that by the above definition, for $k =3$, there is only one ``separated set'', $I=\{1,2\}$. Now we state the induction step formally. 
\begin{proposition}[Induction Step]
\label{prop: inductionstep}
    For any choice of  positive integers $k \geq 4$, there exists a sufficiently large $n_k$ such that for any $N\geq  n\geq n_k$ the following holds.  
    
    Let $t = n/8k$ and $I\in [n]^{(k-1)}$ be an $(n,k)$-separated set and $I'\in [t]^{(k-2)}$ be a $(t,k-1)$-separated set. If the coloring $c: [N]^{(k-1)}\to \Z_4$ does not contain:
    \begin{enumerate}[label = (\roman*)]
        \item\label{it1: inductionstepi} a monochromatic copy of any member of $\cF_{I'}^{(k-1)}(t)$ in colors $\{0,1\}$ and 
        \item\label{it2: inductionstepii} a monochromatic copy of any member of $\rev \cF_{I'}^{(k-1)}(t)$ in colors $\{2,3\}$, 
    \end{enumerate}
    then the coloring $\chi_c: [2^N]^{(k)} \to \Z_4$ does not contain
    \begin{enumerate}[label = (\alph*)]
        \item \label{it1: inductionstepa} a monochromatic copy of any member of $\cF_I^{(k)}(n)$ in colors $\{0,1\}$ and 
        \item \label{it1: inductionstepb} a monochromatic copy of any member of $\rev \cF_I^{(k)}(n)$ in colors $\{2,3\}$. 
    \end{enumerate}
\end{proposition}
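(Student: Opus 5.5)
We argue by contradiction and treat only part \ref{it1: inductionstepa}; part \ref{it1: inductionstepb} is symmetric. Reversing the order exchanges left and right combs and $(k-1,1)$- with $(1,k-1)$-splits, and colors $\{2,3\}$ play the role of $\{0,1\}$ via $\alpha\mapsto 3-\alpha$ in (\ref{eqn: coloring4}). So suppose $(F,<)\in\cF_I^{(k)}(n)$ is monochromatic in color $\alpha\in\{0,1\}$ under $\chi_c$. Write $X=\{x_0<\cdots<x_n\}$ and $I=\{1,2,a_3,\dots,a_{k-1}\}$, with $a_k:=n$. Every edge of $F$ is then of one of two types. If $\alpha=1$, an edge is either a right comb with $c(\pi)=1$ or a $(k-1,1)$-split. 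If $\alpha=0$, an edge is either a right comb with $c(\pi)=0$ or a balanced split. Edges of color $0$ or $1$ are never $(1,k-1)$-splits, and they are left combs only when $3-c(\pi)\in\{0,1\}$, i.e.\ $c(\pi)\in\{2,3\}$.

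The plan is to locate a comb $Y\subseteq X$ of size about $t$, sitting inside a gap between consecutive elements of $I$, and project it.

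\textbf{Step 1: structure of $X$.} Consider the edge $\{x_J\}\cup\{x_j:j\in J\}$ for $J$ inside a block $B_i=\{a_i+1,\dots,a_{i+1}\}$ with $i\ge 2$; each such block has size at least $n/2k$. Because $x_J\le x_1<x_j$, the split type of this edge is governed by how $X\cap B_i$ sits in the tree relative to $x_1$. The key claim is that the sequence $\delta(x_{j-1},x_j)$ for $j\in B_i$ cannot contain patterns forcing a forbidden shape on some edge. We establish it through a sub-claim, proved by an Erdős–Szekeres-type argument: among $n/2k$ consecutive distinguished vertices, one finds $4t$ indices forming a monotone run of $\delta$'s, i.e.\ a left or right comb, or else a configuration on which some edge $\{x_J\}\cup\{x_j:j\in J\}$ forms a $(1,k-1)$-split. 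The latter is excluded for both colors. A true Erdős–Szekeres bound would only give size about $\sqrt{n/2k}$, so instead we use the monochromatic edges directly, as in the Claim inside Proposition \ref{prop: basecase}. We extend a comb one vertex at a time; whenever the next $\delta$ breaks monotonicity, we exhibit an edge $J$ whose $k$ vertices form a forbidden split or a comb of the wrong orientation. This adjacent-extension argument should give a comb of length at least $t$ on a sub-block, possibly after discarding $O(k)$ exceptional positions. Those positions are why $t=n/8k$ rather than $n/2k$. \textbf{This step is the main obstacle}, since with $k$ vertices the local case analysis has many configurations: the position of $x_J$ relative to the comb, and whether the branching node is reached from the left or the right. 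The special edge $X^I$, with $a_1=1$ and $a_2=2$, serves to anchor $u_X$ and determine which side $x_0,x_1$ lie on.

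\textbf{Step 2: left comb case.} Suppose we obtain a left comb $Y=\{y_0<y_1<\dots<y_\ell\}$ with $\ell\ge t$. Take $y_0=x_0$ and $y_1=x_1$, and take the rest inside one block, restricting to those $J$ inside it so that $x_J\in[x_0,x_1]$. Then Proposition \ref{prop: leftcomb} applies, and $\pi(Y\setminus\{y_0\})$ is a monochromatic clique in $c$ of size at least $t-1$ and of color $3-\alpha\in\{2,3\}$. A clique of that size contains a member of $\rev\cF_{I'}^{(k-1)}(t)$: pick an ordering and collapse each $x_J$ onto an endpoint. This contradicts \ref{it2: inductionstepii}.

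\textbf{Step 3: right comb case.} Here we want to apply Proposition \ref{prop: rightprojection}. That requires selecting $y_0=x_0$ and $y_1=x_1$ and verifying condition \ref{it: rightc} via the special edge, then choosing an index set $I^*$ of size $k-1$ containing $1$ inside the comb whose projection $I^*\setminus\{\max I^*\}$ is $(t,k-1)$-separated. The separation of $I$, with gaps of at least $n/2k\ge 4t$, leaves enough room to embed the separated $I'$. The conclusion is a monochromatic member of $\cF_{I'}^{(k-1)}(t)$ in color $\alpha\in\{0,1\}$, contradicting \ref{it1: inductionstepi}. If the special edge does not anchor $x_0$ correctly, we fall back on Step 1 to show that such a configuration forces a forbidden split.
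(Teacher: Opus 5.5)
There is a genuine gap, and it is exactly where you flag the main obstacle. Step~1 is the entire combinatorial content of the proposition, and the sub-claim you propose for it is false.

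Here is a counterexample. Take every $x_J=x_1$. Let a block $B$ consist of $g$ groups $B\cap R(w)$, one for each of $g$ ancestors $w$ of $x_1$ with $x_1\in L(w)$. Let each group be a right comb of size $s$, with $g=s=\sqrt{|B|}$.
\begin{itemize}
\item If $J\subseteq B$ lies in one group, then $X^J$ is a right comb.
\item Otherwise $u_{X^J}$ is the ancestor attached to the group containing $\max J$, and $x_J$ together with the earlier elements of $J$ lies in its left subtree.
\end{itemize}
So no $X^J$ is a $(1,k-1)$-split. Yet a left comb meets each group at most twice, and a right comb has at most one element outside its last group. Hence every comb in $B$ has $O(\sqrt{|B|})$ elements.

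This configuration is ruled out only because it contains $(k-1,1)$-splits (forbidden in color $0$) and balanced splits (forbidden in color $1$). A comb of the ``wrong orientation'' is no contradiction for $k\ge 4$, since each color admits both orientations. So the exclusions you need depend on the color, and any extension argument needs a seed.

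In the paper the seed comes from a case analysis on the shape of the special edge $X^I$:
\begin{itemize}
\item If $X^I$ is a balanced split, then $\{x_1,x_{a_{k-2}},x_{a_{k-1}}\}$ is a right comb. Fact~\ref{fact: bigfact} propagates this to the right comb $\{x_0,x_{a_{k-2}},\dots,x_{a_{k-1}}\}$.
\item If $X^I$ is a right comb, the same fact gives the right comb $\{x_0,x_1,\dots,x_{a_{k-1}}\}$.
\item If $X^I$ is a $(k-1,1)$-split or a left comb in color $1$, then $\{x_0,x_{a_{k-2}},x_{a_{k-1}}\}$ is a left comb. It extends to $x_n$ because the alternative produces a balanced split.
\item If $X^I$ is a left comb in color $0$, take the least $m$ with $\{x_1,x_m,x_{a_3}\}$ a right comb. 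This splits $x_{k-1},\dots,x_{a_3}$ into a left-comb part, forced by the exclusion of $(k-1,1)$-splits, and a right-comb part.
\end{itemize}
The seed also ties $x_0$ and the $x_J$ to the comb, which Propositions~\ref{prop: leftcomb} and~\ref{prop: rightprojection} both require.

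Step~3 is also inconsistent as written. With $y_1=x_1$ and $1\in I^*$, the $k$-set $\{y_{I^*}\}\cup\{y_i:i\in I^*\}$ must be an edge of $F$ for its color to be known. It contains $x_1$ together with a vertex below $x_1$, and the only such edge of $F$ is $X^I$. Three consequences follow:
\begin{itemize}
\item $I^*$ is forced rather than chosen.
\item The comb must contain $x_1,x_2,x_{a_3},\dots,x_{a_{k-1}}$, so it cannot sit inside one block.
\item The projection is a member of $\cF^{(k-1)}_{I\setminus\{\max I\}}(a_{k-1}-1)$. It still has to be pruned to a member of $\cF^{(k-1)}_{I'}(t)$ using the gaps $a_{i+1}-a_i\ge 4t$; this is the paper's right-comb case.
\end{itemize}
To choose $I^*=I'\cup\{\ell\}$ freely, as the paper does in the balanced-split and color-$0$ left-comb cases, you must take $y_1=x_j$ with $j\ge 2$, so that $Y^{I^*}$ is an ordinary edge $X^J$. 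You must then verify condition (c) of Proposition~\ref{prop: rightprojection} by showing $x_J\in L(u_Y)$ and $y_1\in R(u_Y)$. That information again comes only from the shape of $X^I$.

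Finally, in Step~2 a clique on $t-1$ vertices is too small, since members of $\rev\cF^{(k-1)}_{I'}(t)$ have at least $t+1$ vertices. You need $\ell\ge t+2$.
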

    Before we begin the proof, we state the following technical fact, which will be useful in the proof of Proposition \ref{prop: inductionstep}. Assuming that there is a monochromatic copy of  $(F,<)\in \cF_{I}^{(k)}(n)$ in color 0 or 1 in the leaf set $[2^N]$, we will use the following Fact \ref{fact: bigfact} to find large right combs $Y \subseteq V(F)$. Note that in this case, the edges of $F$ are not in color 2, and thus, in view of (\ref{eqn: coloring4}), will not form $(1,k-1)$-splits.    
\begin{fact} \label{fact: bigfact}
    Let $4\le k\le \ell$ and let $ \set{y_0<y_1<\cdots<y_\ell}\subseteq \left[2^N\right]$. If
    \begin{enumerate}[label=(\alph*)]
        \item\label{it: bigfactb} $\set{y_1,y_2,y_\ell}$ forms a right comb, and
        \item\label{it: bigfacta} for all $J\in \set{2,\ldots,\ell}^{(k-1)}$, there exists $y_J$ such that  $y_0\le y_J\le y_1$ and $Y^J=\{y_J\}\cup\{y_j: j\in J\}$ does not form a $(1,k-1)$-split
    \end{enumerate}
    then $\set{y_0,y_1,\cdots, y_\ell}\setminus \{y_1\} = \{y_0, y_2,\dots, y_\ell\}$ forms a right comb.
\end{fact}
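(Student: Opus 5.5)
The plan is to first show that $\{y_2,\dots,y_\ell\}$ is a right comb, using that no $Y^J$ can be a $(1,k-1)$-split, and then to attach $y_0$ at the bottom using hypothesis \ref{it: bigfactb}. The key observation is that every $Y^J$ automatically has the ``shape'' of a $(1,k-1)$-split. Hypothesis \ref{it: bigfacta} then forces it to be a comb, and that comb can only be a right comb.

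\emph{Step 1: the shape of $Y^J$.} Fix $J=\{j_1<\dots<j_{k-1}\}\in\{2,\dots,\ell\}^{(k-1)}$.
\begin{itemize}
\item By \ref{it: bigfactb}, $\delta(y_1,y_2)<\delta(y_2,y_\ell)$.
\item By (\ref{eqn: mindelta}), every pair $y_i,y_j$ with $2\le i<j\le\ell$ satisfies $\delta(y_i,y_j)\ge\delta(y_2,y_\ell)$. So all of $y_2,\dots,y_\ell$ lie below a common vertex $w$ at level $\delta(y_2,y_\ell)$.
\item Since $y_J\le y_1<y_{j_1}$, (\ref{eqn: mindelta}) gives
$$\delta(y_J,y_{j_{k-1}})\le\delta(y_1,y_{j_{k-1}})=\min\{\delta(y_1,y_2),\delta(y_2,y_{j_{k-1}})\}=\delta(y_1,y_2)<\delta(y_2,y_\ell).$$
\end{itemize}
Hence $u_{Y^J}=a(y_J,y_{j_{k-1}})$ lies strictly below $w$. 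Since $y_J\in L(u_{Y^J})$ and $w$, which contains all the $y_j$ with $j\in J$, is a descendant of $u_{Y^J}$ on the same side as $y_{j_{k-1}}$, we get $Y^J_L(u_{Y^J})=\{y_J\}$ and $Y^J_R(u_{Y^J})=\{y_j:j\in J\}$.

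\emph{Step 2: each $Y^J$ is a right comb.} By the definition of splits, a set with this $(1,k-1)$ partition is either a $(1,k-1)$-split or a comb. By \ref{it: bigfacta} it is not a split, so it is a comb. It cannot be a left comb, because a left comb has exactly one element in its right part, while here the right part has $k-1\ge3$ elements (as in the discussion of combs before Remark \ref{fact: combstructure}). So $Y^J$ is a right comb. By Remark \ref{fact: combstructure} \ref{it: comb2R}, every subset of $Y^J$ is then a right comb, in particular $\{y_j:j\in J\}$.

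\emph{Step 3: $\{y_2,\dots,y_\ell\}$ is a right comb.} Since $\ell-1\ge k-1\ge3$, every triple $\{y_{i-1},y_i,y_{i+1}\}$ with $3\le i\le\ell-1$ lies in some $\{y_j:j\in J\}$, and is therefore a right comb. This gives $\delta(y_{i-1},y_i)<\delta(y_i,y_{i+1})$ for every such $i$, so
$$\delta(y_2,y_3)<\delta(y_3,y_4)<\dots<\delta(y_{\ell-1},y_\ell).$$

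\emph{Step 4: adding $y_0$.} By Remark \ref{fact: combstructure} \ref{it: comb1R} applied to the right comb $\{y_2,\dots,y_\ell\}$, we have $\delta(y_2,y_3)=\delta(y_2,y_\ell)$. Using (\ref{eqn: mindelta}) again,
$$\delta(y_0,y_2)=\min\{\delta(y_0,y_1),\delta(y_1,y_2)\}\le\delta(y_1,y_2)<\delta(y_2,y_\ell)=\delta(y_2,y_3).$$
Together with Step 3 this is exactly the chain of inequalities saying that $\{y_0,y_2,\dots,y_\ell\}$ forms a right comb.

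The only delicate point is Step 1, namely checking that the right part of $u_{Y^J}$ is exactly $\{y_j:j\in J\}$. This reduces to the strict inequality $\delta(y_1,y_2)<\delta(y_2,y_\ell)$ supplied by \ref{it: bigfactb}; everything after that is bookkeeping with (\ref{eqn: mindelta}) and Remark \ref{fact: combstructure}.
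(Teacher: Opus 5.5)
Your proof is correct and rests on the same key step as the paper's. The strict inequality $\delta(y_1,y_2)<\delta(y_2,y_\ell)$ from (a) places $u_{Y^J}$ strictly below $a(y_2,y_\ell)$, so $Y^J_L(u_{Y^J})=\{y_J\}$ and $Y^J_R(u_{Y^J})=\{y_j:j\in J\}$, and hypothesis (b) then forces $Y^J$ to be a right comb. The paper packages this as an induction on $j$, choosing $J\ni j,j+1,\ell$ and deriving a contradiction, whereas you read the increasing chain directly off consecutive triples $\{y_{i-1},y_i,y_{i+1}\}$ and attach $y_0$ at the end, which is a slightly cleaner organization of the same argument.
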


\begin{proof}
    We prove by induction on $j$ that $\set{y_0<y_2<\cdots<y_j<y_\ell}$ forms a right comb. As the base case, consider $\set{y_0,y_2,y_\ell}$. A moment of inspection reveals that  $\set{y_0,y_2,y_\ell}$ is a right comb. This follows from condition \ref{it: bigfactb}, i.e, that $\set{y_1,y_2,y_\ell}$ forms a right comb, together with (\ref{eqn: mindelta}) and the fact that $y_0<y_1$.

    Now assume that $\set{y_0<y_2<\cdots<y_j<y_\ell}$, with $2\le j<\ell-1$, forms a right comb, but   $\set{y_0<y_2<\cdots<y_j< y_{j+1}<y_\ell}$ does not form  a right comb.  Fix a set $J\in \set{2,\ldots,\ell}^{(k-1)}$ that contains $j$, $j+1$ and $\ell$. Set $u=u_{Y^J} = a(y_J,y_\ell)$. In Figure \ref{fig: bigfact} below, $Y^J$ consists of the green vertices. We show that in this case $Y^J$ forms a $(1,k-1)$-split, which contradicts assumption \ref{it: bigfacta}. 
    \begin{figure}[h]
    \centering
\tikzset{every picture/.style={line width=0.75pt}} 

\begin{tikzpicture}[x=0.75pt,y=0.75pt,yscale=-1,xscale=1]

\draw    (559.95,191.96) -- (540.57,222.62) ;
\draw    (34.88,109.29) -- (179.99,335.33) ;
\draw    (322.08,109.29) -- (179.99,335.33) ;
\draw  [fill={rgb, 255:red, 208; green, 2; blue, 27 }  ,fill opacity=1 ] (29.38,109.29) .. controls (29.38,106.12) and (31.84,103.55) .. (34.88,103.55) .. controls (37.91,103.55) and (40.37,106.12) .. (40.37,109.29) .. controls (40.37,112.46) and (37.91,115.03) .. (34.88,115.03) .. controls (31.84,115.03) and (29.38,112.46) .. (29.38,109.29) -- cycle ;
\draw  [fill={rgb, 255:red, 65; green, 117; blue, 5 }  ,fill opacity=1 ] (316.59,109.29) .. controls (316.59,106.12) and (319.05,103.55) .. (322.08,103.55) .. controls (325.12,103.55) and (327.58,106.12) .. (327.58,109.29) .. controls (327.58,112.46) and (325.12,115.03) .. (322.08,115.03) .. controls (319.05,115.03) and (316.59,112.46) .. (316.59,109.29) -- cycle ;
\draw [color={rgb, 255:red, 14; green, 18; blue, 193 }  ,draw opacity=1 ] [dash pattern={on 4.5pt off 4.5pt}]  (229,148.77) -- (254.08,109.79) ;
\draw    (207.98,108.92) -- (260.92,206.78) ;
\draw  [fill={rgb, 255:red, 65; green, 117; blue, 5 }  ,fill opacity=1 ] (202.49,108.92) .. controls (202.49,105.75) and (204.95,103.18) .. (207.98,103.18) .. controls (211.02,103.18) and (213.48,105.75) .. (213.48,108.92) .. controls (213.48,112.09) and (211.02,114.66) .. (207.98,114.66) .. controls (204.95,114.66) and (202.49,112.09) .. (202.49,108.92) -- cycle ;
\draw  [fill={rgb, 255:red, 65; green, 117; blue, 5 }  ,fill opacity=1 ] (248.58,108.92) .. controls (248.58,105.75) and (251.05,103.18) .. (254.08,103.18) .. controls (257.12,103.18) and (259.58,105.75) .. (259.58,108.92) .. controls (259.58,112.09) and (257.12,114.66) .. (254.08,114.66) .. controls (251.05,114.66) and (248.58,112.09) .. (248.58,108.92) -- cycle ;
\draw  [fill={rgb, 255:red, 65; green, 117; blue, 5 }  ,fill opacity=1 ] (62.8,109.31) .. controls (62.8,106.14) and (65.26,103.57) .. (68.3,103.57) .. controls (71.33,103.57) and (73.79,106.14) .. (73.79,109.31) .. controls (73.79,112.48) and (71.33,115.05) .. (68.3,115.05) .. controls (65.26,115.05) and (62.8,112.48) .. (62.8,109.31) -- cycle ;
\draw    (68.3,115.93) .. controls (65.08,131.99) and (161.48,282.14) .. (194.78,310.77) ;
\draw  [fill={rgb, 255:red, 208; green, 2; blue, 27 }  ,fill opacity=1 ] (189.29,310.77) .. controls (189.29,307.6) and (191.75,305.03) .. (194.78,305.03) .. controls (197.82,305.03) and (200.28,307.6) .. (200.28,310.77) .. controls (200.28,313.94) and (197.82,316.51) .. (194.78,316.51) .. controls (191.75,316.51) and (189.29,313.94) .. (189.29,310.77) -- cycle ;
\draw    (98.73,109.72) -- (207.93,289.74) ;
\draw  [fill={rgb, 255:red, 208; green, 2; blue, 27 }  ,fill opacity=1 ] (93.24,109.72) .. controls (93.24,106.55) and (95.7,103.98) .. (98.73,103.98) .. controls (101.77,103.98) and (104.23,106.55) .. (104.23,109.72) .. controls (104.23,112.9) and (101.77,115.47) .. (98.73,115.47) .. controls (95.7,115.47) and (93.24,112.9) .. (93.24,109.72) -- cycle ;
\draw    (131.27,110.16) -- (224.58,264.32) ;
\draw  [fill={rgb, 255:red, 208; green, 2; blue, 27 }  ,fill opacity=1 ] (125.78,110.16) .. controls (125.78,106.99) and (128.24,104.42) .. (131.27,104.42) .. controls (134.31,104.42) and (136.77,106.99) .. (136.77,110.16) .. controls (136.77,113.34) and (134.31,115.91) .. (131.27,115.91) .. controls (128.24,115.91) and (125.78,113.34) .. (125.78,110.16) -- cycle ;
\draw    (451.34,191.6) -- (534.64,321.36) ;
\draw    (616.2,191.6) -- (534.64,321.36) ;
\draw  [fill={rgb, 255:red, 65; green, 117; blue, 5 }  ,fill opacity=1 ] (448.18,191.6) .. controls (448.18,189.78) and (449.59,188.31) .. (451.34,188.31) .. controls (453.08,188.31) and (454.49,189.78) .. (454.49,191.6) .. controls (454.49,193.42) and (453.08,194.9) .. (451.34,194.9) .. controls (449.59,194.9) and (448.18,193.42) .. (448.18,191.6) -- cycle ;
\draw  [fill={rgb, 255:red, 65; green, 117; blue, 5 }  ,fill opacity=1 ] (613.05,191.6) .. controls (613.05,189.78) and (614.46,188.31) .. (616.2,188.31) .. controls (617.94,188.31) and (619.36,189.78) .. (619.36,191.6) .. controls (619.36,193.42) and (617.94,194.9) .. (616.2,194.9) .. controls (614.46,194.9) and (613.05,193.42) .. (613.05,191.6) -- cycle ;
\draw    (522.78,192.78) -- (567.31,269.38) ;
\draw  [fill={rgb, 255:red, 65; green, 117; blue, 5 }  ,fill opacity=1 ] (520.48,192.2) .. controls (520.48,190.38) and (521.9,188.91) .. (523.64,188.91) .. controls (525.38,188.91) and (526.79,190.38) .. (526.79,192.2) .. controls (526.79,194.02) and (525.38,195.5) .. (523.64,195.5) .. controls (521.9,195.5) and (520.48,194.02) .. (520.48,192.2) -- cycle ;
\draw  [fill={rgb, 255:red, 65; green, 117; blue, 5 }  ,fill opacity=1 ] (556.79,191.96) .. controls (556.79,190.14) and (558.2,188.67) .. (559.95,188.67) .. controls (561.69,188.67) and (563.1,190.14) .. (563.1,191.96) .. controls (563.1,193.78) and (561.69,195.26) .. (559.95,195.26) .. controls (558.2,195.26) and (556.79,193.78) .. (556.79,191.96) -- cycle ;
\draw  [fill={rgb, 255:red, 208; green, 2; blue, 27 }  ,fill opacity=1 ] (531.48,321.36) .. controls (531.48,319.54) and (532.9,318.06) .. (534.64,318.06) .. controls (536.38,318.06) and (537.79,319.54) .. (537.79,321.36) .. controls (537.79,323.18) and (536.38,324.65) .. (534.64,324.65) .. controls (532.9,324.65) and (531.48,323.18) .. (531.48,321.36) -- cycle ;
\draw    (268.33,260.33) -- (441.33,260.33) ;
\draw [shift={(443.33,260.33)}, rotate = 180] [color={rgb, 255:red, 0; green, 0; blue, 0 }  ][line width=0.75]    (10.93,-3.29) .. controls (6.95,-1.4) and (3.31,-0.3) .. (0,0) .. controls (3.31,0.3) and (6.95,1.4) .. (10.93,3.29)   ;
\draw   (475,181.33) .. controls (475,164.76) and (512.46,151.33) .. (558.67,151.33) .. controls (604.87,151.33) and (642.33,164.76) .. (642.33,181.33) .. controls (642.33,197.9) and (604.87,211.33) .. (558.67,211.33) .. controls (512.46,211.33) and (475,197.9) .. (475,181.33) -- cycle ;
\draw    (479.33,191.33) -- (549.33,299.33) ;

\draw (27.71,82.35) node [anchor=north west][inner sep=0.75pt]    {$y_{0}$};
\draw (94.7,82.53) node [anchor=north west][inner sep=0.75pt]    {$y_{1}$};
\draw (316.1,82.89) node [anchor=north west][inner sep=0.75pt]    {$y_{\ell }$};
\draw (243.87,81.03) node [anchor=north west][inner sep=0.75pt]    {$y_{j+1}$};
\draw (196.53,81.9) node [anchor=north west][inner sep=0.75pt]    {$y_{j}$};
\draw (59.38,82.35) node [anchor=north west][inner sep=0.75pt]    {$y_{J}$};
\draw (127,82.29) node [anchor=north west][inner sep=0.75pt]    {$y_2$};
\draw (155.89,102.19) node [anchor=north west][inner sep=0.75pt]    {$\cdots $};
\draw (202.38,306.63) node [anchor=north west][inner sep=0.75pt]    {$u=u_{Y^{J}}$};
\draw (443.6,166.47) node [anchor=north west][inner sep=0.75pt]    {$y_{J}$};
\draw (609.14,166.78) node [anchor=north west][inner sep=0.75pt]    {$y_{\ell }$};
\draw (548.91,166.29) node [anchor=north west][inner sep=0.75pt]    {$y_{j+1}$};
\draw (516.25,167.22) node [anchor=north west][inner sep=0.75pt]    {$y_{j}$};
\draw (522.18,326.63) node [anchor=north west][inner sep=0.75pt]    {$u=u_{Y^{J}}$};
\draw (533,124.73) node [anchor=north west][inner sep=0.75pt]    {$Y_{R}^{J}( u)$};
\draw (267,238.73) node [anchor=north west][inner sep=0.75pt]  [font=\footnotesize]  {$\textcolor[rgb]{0.05,0.07,0.76}{\{y_{j} ,y_{j+1},y_\ell\} \ \text{forms a left comb}}$};

\end{tikzpicture}

    \caption{If $\{y_0, y_2,\dots, y_j, y_{j+1},y_\ell\}$ does not form a  right comb, equivalently $\set{y_j,y_{j+1},y_\ell}$ forms a left comb, then $Y^J$ (green vertices) must form a $(1,k-1)$-split. This contradicts assumption \ref{it: bigfactb}.}
    \label{fig: bigfact}
\end{figure}
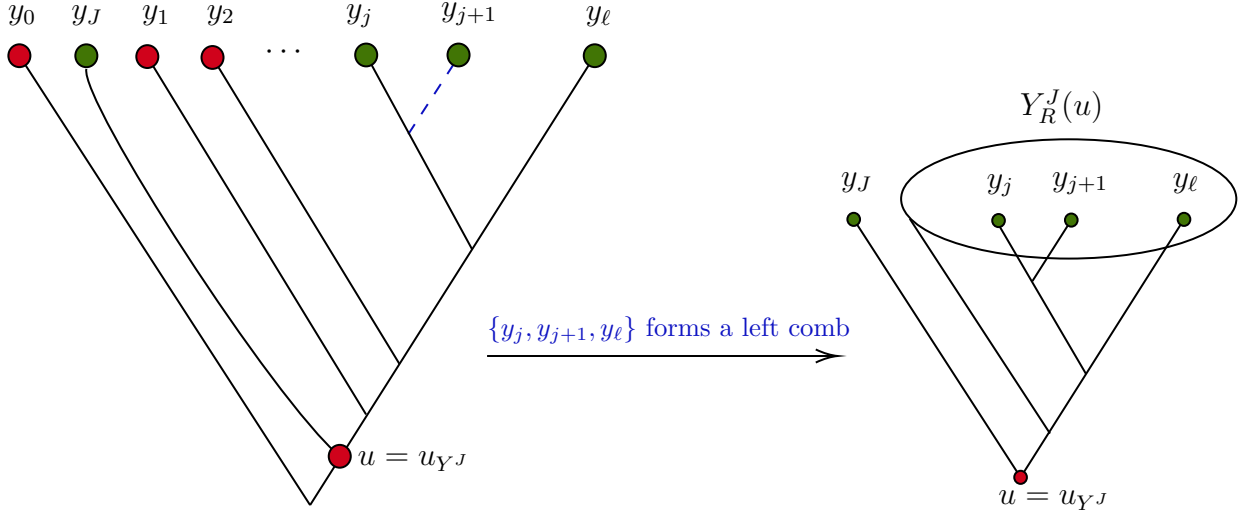
\\
Now we give a formal proof. 
    \begin{claim*}
        $Y^J_L(u)=\{y_J\}$ and $Y^J_R(u)=\{y_j:j\in J\}$.
    \end{claim*}
    \begin{proof}
        Since $u = u_{Y^J}= a(y_J,y_\ell)$, we have $y_\ell\in Y^J_R(u)$ and $y_J\in Y^J_L(u)$. We now show that for all the remaining  $i\in J\setminus\set{\ell}$, we have $y_i \in Y^J_R(u)$. 
        To this end, let $i\in J\setminus\set{\ell}$. We observe that  $\set{y_J,y_i,y_\ell}$ forms a right comb.
        Indeed, since $y_J \le y_1<y_2\le y_i$, by (\ref{eqn: mindelta}) we have:
        $$\delta(y_J,y_i) = \min\{\delta(y_J, y_{1}), \delta(y_1, y_2), \delta(y_2,y_i)\}\le \delta(y_1,y_2).$$
        On the other hand, since $y_2\le y_i\le y_\ell$, (\ref{eqn: mindelta}) implies: 
        $$\delta(y_2,y_\ell) = \min \{\delta(y_2,y_i), \delta(y_i,y_\ell)\} \le \delta(y_i,y_\ell).$$
        In view of \ref{it: bigfactb}, $\{y_1,y_2,y_\ell\}$ is a right comb, so $\delta(y_1,y_2) < \delta(y_2,y_\ell)$ and consequently 
        $$\delta(y_J,y_i)< \delta(y_i, y_\ell),$$
        equivalently, $\{y_J,y_i,y_\ell\}$ forms a right comb. Hence, $y_i\in Y^J_R(u)$ and $y_J\in Y^J_L(u)$.
    \end{proof}
    By the above claim, $|Y^J_L(u)|=1$ and $|Y^J_R(u)|=k-1$. However, $Y^J$ does not form a $(1,k-1)$-split in view of assumption \ref{it: bigfacta}. Consequently, $Y^J$ must form a right comb. In particular, $\set{y_j,y_{j+1},y_\ell}\subseteq Y^J$ forms a right comb, and hence 
    $$ \delta(y_j,y_{j+1}) < \delta(y_{j+1},y_\ell).$$
    In view of the induction hypothesis, we have that  $\set{y_0<y_2<\cdots<y_j<y_\ell}$ forms a right comb, and consequently, 
    $$\delta(y_{j-1},y_j) < \delta(y_j, y_{\ell}) = \min\{\delta(y_j,y_{j+1}), \delta(y_{j+1}, y_\l)\} = \delta(y_{j},y_{j+1}) <\delta(y_{j+1}, y_\ell). $$
   This contradicts the assumption that $\{y_0<y_2 < \cdots < y_j < y_{j+1} < y_\ell\}$ does not form a right comb.  
\end{proof}
Now we prove Proposition \ref{prop: inductionstep}. 
\begin{proof}[Proof of Proposition \ref{prop: inductionstep}]
    Fix $k\geq 4$ and let $n$ be a sufficiently large integer. Let $N \geq n$ and fix a separated set $I\in [n]^{(k-1)}$  with elements 
    $$I = \{ 1,2,a_3, \cdots , a_{k-1} \}.$$
    Let 
    \begin{align}
        \label{eqn: tI'}
        t = \frac{n}{8k}, \ \text{and fix a separated set } I' \in [t]^{(k-2)}.
    \end{align}
    Let $c:[N]^{(k-1)}\to \Z_4$ be a coloring that doesn't contain a monochromatic $\cF_{I'}^{(k-1)}(t)$ in color $\{0,1\}$ and $\rev \cF_{I'}^{(k-1)}(t)$ in $\{2,3\}$. 
    
    We will first show \ref{it1: inductionstepa} and to this end, assume for the sake of contradiction that the coloring given by $\chi_c: [2^N]^{(k)}\to \Z_4$ contains a monochromatic copy of some $(F_{\phi},<)\in \cF_{I}^{(k)}(n)$ in color $0$ or $1$. 
    
    We first describe the vertex set and edge set of $(F, <)$.  Recall from Definition \ref{def:F_I}, that the vertex set of $F$ is
    \[V(F)=\set{x_0,x_1,\ldots,x_n}\cup\set{x_J:\; J\in \{2,\dots,n\}^{(k-1)}},\]
    where we call $X=\{x_0,\ldots,x_n\}$ the \textit{distinguished set of vertices}. The ordering of vertices satisfies
    \begin{equation}
        \label{eqn: VFord}
        x_0\le \set{x_J:J\in \{2,\dots,n\}^{(k-1)}} \le x_1 < x_2<\dots< x_n.
    \end{equation}
    The edge set of $F$ is
    $$ F=\left\{X^J= \{x_J\}\cup\{x_j\}_{j\in J}:\; J\in \{2,\dots,n\}^{(k-1)}\cup\{I\}\right\}.$$
    Here $X^I = \{x_0\}\cup\{x_i\}_{i\in I}$ is the \textit{special edge}. 
    
    We now consider the cases when $(F,<)$ is in color 0 (Case I) and color 1 (Case II) separately. Each of these cases are further subdivided into sub-cases depending on the shape of the special edge $X^I$. \\
    \noindent \underline{\textbf{Case I: When $(F, < )$ is color 0}}: 
   In this case, we divide our analysis into three sub-cases based on the shape of the special edge  $X^I$, 
     $$X^I = \{x_0\}\cup\{x_i:i\in I\} =  \{x_0 < x_1< x_2< x_{a_3}< \cdots < x_{a_{k-1}}\}.$$
   Since $(F,<)$ is in color $0$, we have $\chi_c(X^I) = 0$. In view of the definition of $\chi_c$ in (\ref{eqn: coloring4}), there are 3 possible scenarios in which this may occur:
    \begin{itemize}
        \item \textbf{Case I.1:}  $X^I$ forms a \textit{balanced split}, 
         \item \textbf{Case I.2:} $X^I$ forms a \textit{right comb} and $c(\pi(X^I))= 0$,
         \item \textbf{Case I.3:} $X^I$ forms a \textit{left comb} and $c(\pi(X^I)) = 3$.
         
    \end{itemize}
    We now analyze each of these cases separately. As discussed in the outline of the proof at the beginning of Subsection \ref{subsec:orderedstepup}, we divide each case into two parts, and   arrive at a contradiction to \ref{it1: inductionstepi} or \ref{it2: inductionstepii}. 
    \\
    
    \noindent\underline{\textbf{Case I.1: $X^I$ forms a balanced split:}} In this case, let $u = u_{X^I} = a(x_0, x_{a_{k-1}})$. Since $X^I$ forms a balanced split, we must have  $|X_L^I(u)|\ge 2$ and $ |X_R^I(u)|\ge 2$. Consequently, 
    \begin{align}
    \label{eqn: caseI.1split}
        \{x_0,x_1\}\subseteq X^I_L(u) \quad \text{ and } \quad \{x_{a_{k-2}},x_{a_{k-1}}\}\subseteq X_R^I(u).
    \end{align}
First we prove Claim \ref{claim: CaseI.1St}, where we find a large right comb $Y\subseteq V(F)$.
    \begin{claim}
    \label{claim: CaseI.1St}
        The set $Y:= \set{x_0,x_{a_{k-2}},\ldots,x_{a_{k-1}}}$ forms a right comb.
    \end{claim}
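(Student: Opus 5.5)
The plan is to exploit the split structure of the special edge directly, using only the edges $X^J$ with $J\subseteq\{a_{k-2},\dots,a_{k-1}\}$. Let $u=u_{X^I}=a(x_0,x_{a_{k-1}})$ as above. By (\ref{eqn: caseI.1split}), $x_0,x_1\in L(u)$ and $x_{a_{k-2}},x_{a_{k-1}}\in R(u)$. The leaves of $L(u)$ form an interval of $[2^N]$, and so do those of $R(u)$. Hence every $x_j$ with $a_{k-2}\le j\le a_{k-1}$ lies in $R(u)$. Likewise every $x_J$ lies in $L(u)$, since $x_0\le x_J\le x_1$ by (\ref{eqn: VFord}).

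Consequently $\delta(x_0,x_{a_{k-2}})=\pi(u)$. Also $\delta(x_j,x_{j'})>\pi(u)$ for any two distinct indices $a_{k-2}\le j<j'\le a_{k-1}$, because both leaves lie in the same child subtree of $u$. So the first gap of $Y$ is the smallest one. It therefore suffices to show that $Y\setminus\{x_0\}=\{x_{a_{k-2}}<\dots<x_{a_{k-1}}\}$ forms a right comb. This amounts to showing that every triple $x_j<x_{j'}<x_{j''}$ from this range is a right comb. Indeed, applying that to consecutive triples gives $\delta(x_{j-1},x_j)<\delta(x_j,x_{j+1})$ throughout.

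Fix such a triple and choose $J\in\{2,\dots,n\}^{(k-1)}$ with $\{j,j',j''\}\subseteq J\subseteq\{a_{k-2},\dots,a_{k-1}\}$. This is possible because $I$ is separated, so the range has at least $n/2k+1\ge k-1$ elements, and $a_{k-2}\ge 2$. Consider the edge $X^J=\{x_J\}\cup\{x_i\}_{i\in J}$. We have $\min X^J=x_J\in L(u)$ and $\max X^J\in R(u)$, so $u_{X^J}=u$. The paragraph above then gives $X^J_L(u)=\{x_J\}$ and $X^J_R(u)=\{x_i\}_{i\in J}$.

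Now $\chi_c(X^J)=0$, so by (\ref{eqn: coloring4}) $X^J$ is not a $(1,k-1)$-split, which would have color $2$. Nor is it a left comb: for $k\ge4$ a left comb has $|X_R(u)|=1\neq k-1$. Since $|X^J_L(u)|=1$, it is not a balanced split either. Hence $X^J$ is a right comb. By Remark \ref{fact: combstructure} \ref{it: comb2R}, its subset $\{x_j,x_{j'},x_{j''}\}$ is also a right comb, which is what we needed.

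The only delicate point is the bookkeeping showing $u_{X^J}=u$ and the exact left/right partition of $X^J$. This rests on $L(u)$ and $R(u)$ being intervals of leaves. Once that is in place, the color constraint leaves no option but a right comb.
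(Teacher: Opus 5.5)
Your argument is correct and is essentially the paper's proof: the paper applies Fact~\ref{fact: bigfact} with $y_0=x_0$, $y_1=x_1$ and $y_2,\dots,y_\ell$ the run $x_{a_{k-2}},\dots,x_{a_{k-1}}$, and the heart of that Fact is exactly your step of choosing $J$ inside this run, showing $X^J_L(u)=\{x_J\}$, and using $\chi_c(X^J)=0\neq 2$ to force $X^J$ to be a right comb. You inline this for Case I.1 in three ways: you read off the left/right partition from the fact that the leaves below each child of $u$ form an interval, rather than from the $\delta$-inequalities of the Fact's inner claim; you replace the Fact's induction on $j$ by a check of consecutive triples; and you handle $x_0$ by observing that the first gap $\delta(x_0,x_{a_{k-2}})=\pi(u)$ is the smallest.
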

    \begin{proof}
    Let
     $$y_0=x_0, \,y_1=x_1  \text{ and } y_2=x_{a_{k-2}},\; y_3 = x_{a_{k-2}+1},\;\ldots,\; y_{\ell-1} = x_{a_{k-1}-1},\; y_{\ell} = x_{a_{k-1}},$$
     i.e., $y_2$ through $y_\ell$ is the interval between $x_{a_{k-2}}$ and $x_{a_{k-1}}$.  We now apply Fact \ref{fact: bigfact} on $\{y_0,y_1,\dots, y_\ell\}$. To this end, we verify its assumptions. Note that $\ell = a_{k-1} - a_{k-2}+2$ and
        since $I$ is separated and $n_k$ is chosen to be sufficiently large, $a_{k-1} - a_{k-2}+2 \ge n/2k \ge k$. Consequently, $\ell \ge k$ as required in Fact \ref{fact: bigfact}.  
        \begin{itemize}
            \item Condition \ref{it: bigfactb} follows from (\ref{eqn: caseI.1split}), i.e.,  $x_1\in X^I_L(u)$ and $\{x_{a_{k-2}},x_{a_{k-1}}\}\subseteq X_R^I(u)$ and hence $\{x_1, x_{a_{k-2}}, x_{a_{k-1}}\}$ forms a right comb.
            \item  Condition \ref{it: bigfacta} holds since $(F, <)$ is monochromatic in color 0. Indeed for all ~$J\in \set{a_{k-2},\ldots,a_{k-1}}^{(k-1)}$, and in view of (\ref{eqn: VFord}) we have $x_0\le x_J\le x_1$. The edge $X^J= \set{x_J}\cup \set{x_j:j\in J}$ is of color 0 and hence does not form a $(1,k-1)$-split.
        \end{itemize}   
         Consequently, Fact \ref{fact: bigfact} implies that $Y$ forms a right comb.
    \end{proof}
    Recall that $t = n/8k$ and let $\tilde{Y} = \{x_0, x_{a_{k-2}}, \dots, x_{a_{k-2}+t-1}, x_{a_{k-1}}\}$. Thus $\tilde{Y}$ is a $(t+2)$ element subset of $Y$ containing the first and last elements of $Y$. Since $Y$ forms a right comb, so does $\tilde{Y}$. We will now apply Proposition \ref{prop: rightprojection} to obtain a member of the forbidden family in the projection. 

    \begin{figure}[h]
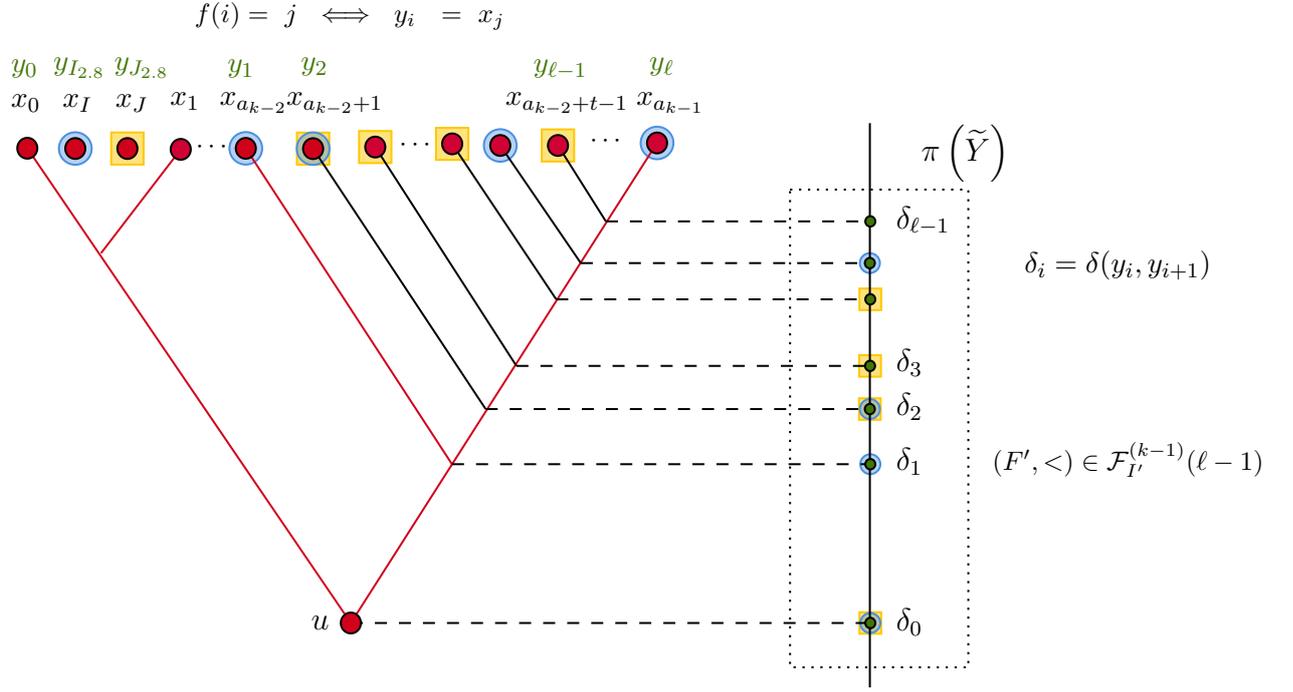

        \centering

\tikzset{every picture/.style={line width=0.75pt}} 


        \caption{Case I.1: The set $\tilde{Y}$ projects to a copy of $(F',<)\in \cF_{I'}^{(k-1)}(t)$}
        \label{fig: CaseI.1}
    \end{figure}
    
    \begin{claim}
    \label{claim: CaseI.1Proj}
        $c^{-1}(0)\subseteq [N]^{(k-1)}$ contains a monochromatic copy of $(F', <)\in \cF_{I'}^{(k-1)}(t)$. 
    \end{claim}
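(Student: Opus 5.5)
We apply Proposition \ref{prop: rightprojection} to the right comb $\tilde Y$ with $\ell=t+1$, using $\alpha=0$. The whole task is to choose the auxiliary vertices $y_J$ and the special index set so that the resulting $I'$ is exactly the separated set fixed in (\ref{eqn: tI'}).

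\textbf{Relabelling and the special set.} Relabel $\tilde Y$ as $y_0=x_0$, $y_i=x_{a_{k-2}+i-1}$ for $1\le i\le t$, and $y_{t+1}=x_{a_{k-1}}$. Thus $\tilde Y=\{y_0<y_1<\dots<y_\ell\}$ with $\ell=t+1$. Define the index map $\varphi$ by $\varphi(i)=a_{k-2}+i-1$ for $1\le i\le t$ and $\varphi(t+1)=a_{k-1}$. Since $a_{k-2}\ge 2$, the map $\varphi$ sends $\{1,\dots,\ell\}$ into $\{2,\dots,n\}$, and $y_i=x_{\varphi(i)}$. As the special index set for the proposition we take
$$I''=I'\cup\{t+1\}\in[\ell]^{(k-1)}.$$
It contains $1$ because $1\in I'$, and $I''\setminus\{\max I''\}=I'$ because $I'\subseteq[t]$. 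Note that $\ell=t+1\ge k$ for $n\ge n_k$.

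\textbf{Checking the hypotheses.}
\begin{itemize}
\item Condition \ref{it: righta} holds because $\tilde Y$ is a right comb by Claim \ref{claim: CaseI.1St}.
\item For condition \ref{it: rightb}, take any $\tilde J\in\{2,\dots,\ell\}^{(k-1)}$. Then $\varphi(\tilde J)\in\{2,\dots,n\}^{(k-1)}$, and we set $y_{\tilde J}:=x_{\varphi(\tilde J)}$. By (\ref{eqn: VFord}) we have $y_0=x_0\le y_{\tilde J}\le x_1<x_{a_{k-2}}=y_1$. This is slightly more than required.
\item For condition \ref{it: rightc}, let $J^*=\varphi(I'')$. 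This set lies in $\{2,\dots,n\}^{(k-1)}$, so $X^{J^*}$ is an ordinary (non-special) edge of $F$. Set $y_{I''}:=x_{J^*}$, so that $x_0\le y_{I''}\le x_1$. Let $u=a(x_0,x_{a_{k-1}})$. Then $u_{\tilde Y}=u$, since $\tilde Y$ has the same extreme elements as $X^I$. By (\ref{eqn: caseI.1split}) we have $x_0,x_1\in L(u)$. Every leaf lying between two leaves of $L(u)$ is itself in $L(u)$, so $y_{I''}\in L(u)$. Also $y_1=x_{a_{k-2}}\in R(u)$. Hence $a(y_{I''},y_1)=u=u_{\tilde Y}$, and $y_{I''}<y_1$, as required.
\item For the hypothesis ($\star$), observe that for every $\tilde J\in\{2,\dots,\ell\}^{(k-1)}\cup\{I''\}$,
$$\{y_{\tilde J}\}\cup\{y_i:i\in\tilde J\}=X^{\varphi(\tilde J)}.$$
This is an edge of $F$, so $\chi_c$ assigns it colour $0$ because the copy is monochromatic.
\end{itemize}

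\textbf{Conclusion.} Proposition \ref{prop: rightprojection} then gives that $c^{-1}(0)$ contains a copy of some $(F',<)\in\cF^{(k-1)}_{I'}(\ell-1)=\cF^{(k-1)}_{I'}(t)$, with distinguished set $\pi(\tilde Y)$. This is the claim, and it contradicts \ref{it1: inductionstepi}.

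\textbf{Main obstacle.} The only delicate point is condition \ref{it: rightc}. The special edge needed by the proposition cannot be the special edge $X^I$ of $F$, because $X^I$ does not live inside $\tilde Y$ with the right indices. Instead it must be an ordinary edge $X^{J^*}$. Verifying that its extra vertex $x_{J^*}$ branches off exactly at the root $u$ of $\tilde Y$ is the one place where the balanced-split structure (\ref{eqn: caseI.1split}) is used.
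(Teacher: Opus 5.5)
Your proposal is correct and matches the paper's proof: the same relabelling $y_i=x_{\varphi(i)}$ of $\tilde Y$, the same special set $I'\cup\{t+1\}$, and the same choice $y_J=x_{\varphi(J)}$, including for the special index, so that the special edge of the projection comes from an ordinary edge of $F$, followed by Proposition \ref{prop: rightprojection} with $\alpha=0$. The paper only points to a figure for condition (c); your explicit check of it via (\ref{eqn: caseI.1split}) and the interval structure of $L(u)$ is a useful added detail.
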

    \begin{proof}
    With the goal of applying Proposition \ref{prop: rightprojection}, we let $\ell = t+1$ and  let $$f:[t+1]=[\ell] \to \{a_{k-2}, \dots, a_{{k-2}+t-1}, a_{k-1}\}$$ be the increasing bijection, in particular, $f(1) = a_{k-2}$ and $ f(\ell) = a_{k-1}.$  Let
        $$y_0 = x_0 \quad \text{ and } \quad y_i = x_{f(i)} \text{ for all $1\le i\le \ell$.}$$
        Recall that $I'\in [\ell -1]^{(k-2)} = [t]^{(k-2)}$ was fixed at the beginning of the proof. Let 
        $$I_{\ref{prop: rightprojection}} = I' \cup \{\ell\}\in [\ell]^{(k-1)}.$$
       
        We will now apply Proposition \ref{prop: rightprojection} with $Y= \{y_0,\dots, y_\ell\}$, and $N$ and $c:[N]^{(k-1)}\to \ZZ_4$ as in the statement of Proposition \ref{prop: inductionstep}. To this end, we need to verify conditions \ref{it: righta}--\ref{it: rightc} of Proposition \ref{prop: rightprojection}:
        \begin{itemize}
            \item  In view of Claim \ref{claim: CaseI.1St}, the set $\tilde{Y} = \{y_0, y_1,\dots, y_\ell\}$ as defined above is a right comb, and hence \ref{it: righta} holds.
            \item 
        Given any $J_{\ref{prop: rightprojection}}\in \{2,\dots, \ell\}^{(k-1)}\cup \{I_{\ref{prop: rightprojection}}\}$, let $J = f(J_{\ref{prop: rightprojection}})$. In other words, $J\in \{a_{k-2},a_{k-2}+1,\dots, a_{k-2}+t-1, a_{k-1}\}^{(k-1)}$.   
        Let $y_{J_{\ref{prop: rightprojection}}}= x_{J}$. In view of (\ref{eqn: VFord}), we have  $$y_0 = x_0 \le y_{J_{\ref{prop: rightprojection}}} = x_J\leq x_1 \le y_1,$$
        assumption \ref{it: rightb} holds. See Figure \ref{fig: CaseI.1}.
        \item   To observe that \ref{it: rightc} holds, i.e., $a(y_{I_{\ref{prop: rightprojection}}}, y_1) = u_Y = a(y_0,y_\ell)$, see Figure \ref{fig: CaseI.1}.
        \end{itemize}
        Now we verify that $(\star)$ in Proposition \ref{prop: rightprojection} holds. 
       Fix $J_{\ref{prop: rightprojection}}\in \{2,\dots,\ell\}^{(k-1)}\cup\{I_{\ref{prop: rightprojection}}\}$ where $J_{\ref{prop: rightprojection}}= \{i_1,\dots, i_{k-1}\}$. Let $J = \{f(i_1), \dots, f(i_{k-1})\} = f(J_{\ref{prop: rightprojection}}).$ By the assumption of Case I:  
        $$\chi_c(Y^J) = \chi_c( \{y_{J_{\ref{prop: rightprojection}}}\}\cup \{y_j:j\in J_{\ref{prop: rightprojection}}\}) = \chi_c(\{x_J\}\cup \{x_{j}:j\in J\}) =0.$$
        
        Consequently, $c^{-1}(0)$ contains a monochromatic copy of $(F',<)$ in the family $\cF_{I'}^{(k-1)}(\ell-1) = \cF_{I'}^{(k-1)}(t)$ where $I' = I_{\ref{prop: rightprojection}}\setminus \max I_{\ref{prop: rightprojection}}$.
    \end{proof}
  The conclusion of Claim \ref{claim: CaseI.1Proj} contradicts assumption  \ref{it1: inductionstepi} in Proposition \ref{prop: inductionstep}. This completes Case I.1.
\\[0.3cm]
\noindent\underline{\textbf{Case I.2: $X^I$ forms a right comb:}} In this case, we focus on the following four element subset of $X^I$. Consider $\{x_0< x_1< x_2< x_{a_{k-1}}\}\subseteq X^I$, which also forms a right comb. The rest of the analysis of Case I.2 proceeds  similarly to Case I.1. 
\begin{remark}
    \label{rem: notofcolor2}
    We assume here that $(F,<)$ is in color 0 and hence not in color 2. In the rest of the case we only use that $(F, <)$ is not in color 2. 
\end{remark}

    \begin{claim}
    \label{claim: CaseI.3St}
        The set $Y = \set{x_0,x_1, x_2,\ldots,x_{a_{k-1}}}$ forms a right comb.
    \end{claim}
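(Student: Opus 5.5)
The plan is to reduce the claim to Fact \ref{fact: bigfact}, exactly as in Claim \ref{claim: CaseI.1St}, and then reinsert the vertex $x_1$ by hand. Set $\ell := a_{k-1}$ and take $y_i := x_i$ for $0\le i\le \ell$, so that $\{y_0<y_1<\cdots<y_\ell\} = Y$. Since $I$ is separated and $n\ge n_k$ is large, we have $\ell = a_{k-1}\ge n/2k \ge k$, so the size requirement $4\le k\le \ell$ of Fact \ref{fact: bigfact} holds.

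First I verify the two hypotheses of Fact \ref{fact: bigfact}.
\begin{itemize}
\item For condition \ref{it: bigfactb}: by Remark \ref{fact: combstructure} \ref{it: comb2R}, every subset of the right comb $X^I$ is a right comb. Since $\{x_1,x_2,x_{a_{k-1}}\}\subseteq X^I$, this triple forms a right comb.
\item For condition \ref{it: bigfacta}: for each $J\in\{2,\dots,\ell\}^{(k-1)}$, the vertex $x_J$ satisfies $x_0\le x_J\le x_1$ by (\ref{eqn: VFord}). The edge $X^J$ has color $0\neq 2$, so by (\ref{eqn: coloring4}) it is not a $(1,k-1)$-split. This is the only place the color is used, in line with Remark \ref{rem: notofcolor2}.
\end{itemize}
Fact \ref{fact: bigfact} then yields that $\{x_0,x_2,x_3,\dots,x_\ell\}$ forms a right comb.

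It remains to show that adding $x_1$ back keeps the right comb, i.e.\ that
$$\delta(x_0,x_1)<\delta(x_1,x_2)<\delta(x_2,x_3)<\cdots<\delta(x_{\ell-1},x_\ell).$$
The chain from $\delta(x_2,x_3)$ onward is already given by the right comb $\{x_0,x_2,\dots,x_\ell\}$. The first inequality holds because $\{x_0,x_1,x_2\}\subseteq X^I$ is a right comb.

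The one step needing care is the middle link $\delta(x_1,x_2)<\delta(x_2,x_3)$. Here I use that $\{x_1,x_2,x_\ell\}$ is a right comb, which gives $\delta(x_1,x_2)<\delta(x_2,x_\ell)$. By (\ref{eqn: mindelta}), $\delta(x_2,x_\ell)$ is the minimum of the consecutive values $\delta(x_i,x_{i+1})$ for $2\le i<\ell$, so $\delta(x_2,x_\ell)\le \delta(x_2,x_3)$. Chaining the two inequalities gives $\delta(x_1,x_2)<\delta(x_2,x_3)$, which completes the chain and shows that $Y$ is a right comb.

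I do not expect a real obstacle; the only delicate point is choosing the right triple, $\{x_1,x_2,x_\ell\}$, for this last comparison.
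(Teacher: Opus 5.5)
Your proposal is correct and is essentially the paper's proof. Like the paper, you apply Fact~\ref{fact: bigfact} to $y_i=x_i$ with $\ell=a_{k-1}$, checking its hypotheses via $\{x_1,x_2,x_{a_{k-1}}\}\subseteq X^I$ and the absence of color~$2$, and then reinsert $x_1$ using that $X^I$ is a right comb. The paper compresses that reinsertion into the one remark that $\{x_0,x_1,x_2,x_{a_{k-1}}\}\subseteq X^I$ is a right comb, whereas you spell out the needed link $\delta(x_1,x_2)<\delta(x_2,x_{a_{k-1}})\le\delta(x_2,x_3)$.
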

    \begin{proof}
      For $\ell = a_{k-1}$, let
        $$y_0=x_0,\; y_1=x_1,\; y_2 = x_2,\; \dots,\; y_\ell = x_{a_{k-1}}.$$
        We now apply Fact \ref{fact: bigfact} with $\{y_0,y_1,\dots, y_\ell\}$. To this end, we verify its assumptions.   
       Since the set $I$ is a separated subset of $[n]^{(k-1)}$ and $n$ is chosen to be large enough, $\ell = a_{k-1}\ge n/2k > k$.  
        \begin{itemize}
            \item Condition \ref{it: bigfactb} holds since the set  $\set{x_1,x_2,x_{a_{k-1}}}$ is a subset of the right comb $X^I$ and thus itself also forms a right comb.
            \item  Condition \ref{it: bigfacta} holds since $(F, <)$ is monochromatic not in color 2. Indeed, for all $J\in \set{2,\ldots,a_{k-1}}^{(k-1)}$, the leaf $y_J= x_J$ satisfies $y_0 =x_0\le y_J= x_J\le x_1=y_1$. The edge $X^J= \set{x_J}\cup \set{x_j:j\in J}$ is not in color 2, and hence, in view of (\ref{eqn: coloring4}), does not form a $(1,k-1)$-split. 
        \end{itemize} 
        Thus Fact \ref{fact: bigfact} implies that $\set{x_0,x_2,\ldots,x_{a_{k-1}}}=Y\setminus \{x_1\}$ forms a right comb. Since $\set{x_0,x_1,x_2,x_{a_{k-1}}}$ is a subset of $X^I$, it forms a right comb, and consequently so does $Y = \set{x_0,x_1,x_2,\ldots,x_{a_{k-1}}}$.
    \end{proof}
Recall that $t = n/8k$ and let
\begin{align}
\label{eqn: CaseI.3l,t}
    t' = a_{k-1}-1 \text{ and } I'' = I\setminus \max I \in [t']^{(k-2)}.
\end{align}

\begin{figure}[h]
        \centering

\tikzset{every picture/.style={line width=0.75pt}} 


        \caption{Case I.2: The set $Y$ projects to a copy of $(F'',<)\in \cF_{I''}^{(k-1)}(t')$}
        \label{fig: CaseI.2}
    \end{figure}

\begin{claim}
\label{claim: CaseI.3Proj}
       $c^{-1}(0)\subseteq [N]^{(k-1)}$ contains a monochromatic copy of a member of $\cF_{I''}^{(k-1)}(t')$.
    \end{claim}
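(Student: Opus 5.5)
We apply Proposition \ref{prop: rightprojection} directly to the right comb $Y=\{x_0,x_1,x_2,\dots,x_{a_{k-1}}\}$ of Claim \ref{claim: CaseI.3St}, with no re-indexing. Set $y_i:=x_i$ for $0\le i\le \ell$, where $\ell:=a_{k-1}$, and take $I_{\ref{prop: rightprojection}}:=I$. Since $1\in I$ and $\max I=a_{k-1}=\ell$, we have $I_{\ref{prop: rightprojection}}\in[\ell]^{(k-1)}$ and $I_{\ref{prop: rightprojection}}\setminus\{\max I_{\ref{prop: rightprojection}}\}=I''$. The sizes are fine: $\ell=a_{k-1}\ge n/2k\ge k\ge 4$, and $N\ge \ell$ because $N\ge n$.

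We then check the hypotheses of Proposition \ref{prop: rightprojection}.
\begin{itemize}
\item Condition \ref{it: righta} is exactly Claim \ref{claim: CaseI.3St}.
\item For condition \ref{it: rightb}, take $y_J:=x_J$ for $J\in\{2,\dots,\ell\}^{(k-1)}$. These are legitimate vertices of $F$, since $\{2,\dots,a_{k-1}\}\subseteq\{2,\dots,n\}$, and (\ref{eqn: VFord}) gives $x_0\le x_J\le x_1$.
\item For condition \ref{it: rightc}, the required vertex is $y_I:=x_I=x_0=y_0$. Then $y_0\le y_I<y_1$ holds trivially, and $a(y_I,y_1)=a(y_0,y_1)$. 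It remains to see $a(y_0,y_1)=u_Y=a(y_0,y_\ell)$. This holds because $Y$ is a right comb, so by Remark \ref{fact: combstructure} \ref{it: comb1R} we get $a(y_0,y_1)=a(y_0,y_j)$ for all $j$.
\end{itemize}

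Next we check $(\star)$ with $\alpha=0$. For every $J\in\{2,\dots,\ell\}^{(k-1)}\cup\{I\}$, the set $\{y_J\}\cup\{y_j:j\in J\}$ equals the edge $X^J$ of $F$, which has colour $0$ under $\chi_c$. Proposition \ref{prop: rightprojection} then gives a copy in $c^{-1}(0)$ of a member of $\cF_{I''}^{(k-1)}(\ell-1)=\cF_{I''}^{(k-1)}(t')$, with distinguished set $\pi(Y)$.

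The only delicate point is condition \ref{it: rightc}. It needs the special vertex to sit below the first split of the comb, and here it does, since it coincides with $y_0$. Everything else is bookkeeping, namely confirming that $I''$ is the set obtained by deleting $\max I$ and that the parameters match $t'=a_{k-1}-1$. Note that this claim does not by itself contradict \ref{it1: inductionstepi}, since $I''$ is not $(t,k-1)$-separated and $t'\neq t$. Presumably the paper afterwards passes to a subfamily, for instance by restricting to a window of $t$ consecutive indices as in Case I.1, to reach $\cF_{I'}^{(k-1)}(t)$. That step is not part of this claim.
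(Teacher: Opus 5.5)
Your proof is correct and is essentially the paper's: you apply Proposition~\ref{prop: rightprojection} directly to $Y=\{x_0,x_1,\dots,x_{a_{k-1}}\}$ with $I_{\ref{prop: rightprojection}}=I$, $y_J=x_J$, $y_I=x_0$ and $\alpha=0$. The one cosmetic difference is in condition~\ref{it: rightc}, which the paper justifies from $\{y_0,y_1,y_\ell\}\subseteq X^I$ being a right comb rather than from the comb structure of $Y$. Your closing guess about the next step is slightly off, though that step lies outside this claim: the paper does not take a consecutive window, but keeps the special edge and deletes distinguished vertices between consecutive $\delta_{a_i}$'s.
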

\begin{proof}
     Let  
        $$y_0 = x_0,\;  y_1 = x_{1},\; y_2 = x_{2},\; \cdots,\; y_\ell = x_{a_{k-1}}.$$
         We apply Proposition \ref{prop: rightprojection} with $Y=\{y_0,\dots,y_\ell\}$ and with $N$, $k$ and $c$ as in the statement of Proposition \ref{prop: inductionstep}. To this end, we verify conditions \ref{it: righta}--\ref{it: rightc} of Proposition \ref{prop: rightprojection}. 
        \begin{itemize}
            \item  In view of Claim \ref{claim: CaseI.3St}, the set $Y = \{y_0, y_1,\dots, y_\ell\}$ as defined above is a right comb and thus \ref{it: righta} holds.
            \item    Let $I_{\ref{prop: rightprojection}} = I \in [\ell]^{(k-1)}$,   $J_{\ref{prop: rightprojection}} = J\in \{2,\dots, \ell\}^{(k-1)}\cup \{I\}$,  and let $y_{J_{\ref{prop: rightprojection}}}=x_J$. In view of (\ref{eqn: VFord}), we have  $$y_0 = x_0 \le y_{J_{\ref{prop: rightprojection}}} = x_J\leq x_1 \le y_1,$$
        so assumption \ref{it: rightb} holds. 
            \item     We now verify that \ref{it: rightc} holds, i.e., $a(y_{I_{\ref{prop: rightprojection}}}, y_1) = u_Y = a(y_0,y_\ell).$ Indeed, since $I_{\ref{prop: rightprojection}} = I$, we have that $y_{I_{\ref{prop: rightprojection}}} = x_0$, and so we need to show that $a(y_0, y_1) = u_Y = a(y_0,y_\ell)$. This is true since $\{y_0, y_1, y_\ell\}\subseteq X^I$ forms a right comb.         
        \end{itemize}
        The condition $(\star)$ holds due to the assumption in Case I that $(F_{\phi},<)$ is monochromatic in color 0. 
        Consequently, we have that $c^{-1}(0)$ contains a member of   $\cF_{I''}^{(k-1)}(\ell-1) = \cF_{I''}^{(k-1)}(t')$ where $I''$ and $t'$ as defined in (\ref{eqn: CaseI.3l,t}). 
\end{proof}
Claim \ref{claim: CaseI.3Proj} above implies that the subgraph in color $0$ in the projection, i.e., $c^{-1}(0)\subseteq [N]^{(k-1)}$, contains some $(F'',<)\in \cF_{I''}^{(k-1)}(t')$. Denote the distinguished set of vertices of $V(F'')$ by $\{\delta_0, \delta_1,\dots ,\delta_{t'}\}$.  Recall that in (\ref{eqn: tI'}) we fixed a separated set $I'\in [t]^{(k-2)}$. We claim that $(F'', <)$ contains a member of $\cF_{I'}^{(k-1)}(t)$ as an ordered subgraph, thus contradicting \ref{it1: inductionstepi} of Proposition \ref{prop: inductionstep}.

Recall that $I=\{1,2,a_{3}, a_4, \dots, a_{k-2},  a_{k-1}\}$ is a separated set. Consequently, for every $2\le i\le k-2$, we have: 
$$a_{i+1}- a_i \ge \frac{n}{2k} = 4t.$$
Hence, the set $I'' = \{1,2,a_{3}, \dots, a_{k-2}\}$ satisfies $a_{i+1}- a_i \ge n/2k = 4t$ for all $2\le i\le k-3$.  

Since $a_{i+1}-a_i\ge 4t$ for every $2\le i\le k-3$, by removing appropriate vertices of $F''$ between $\delta_{a_i}$ and $\delta_{a_{i+1}}$ for each $2\le i\le k-3$, we are left with $(F',<)\in \cF_{I'}^{(k-1)}(t)$ as an ordered subgraph. 

In particular, for $I' = \{b_1=1< b_2 = 2< b_3 <\dots < b_{k-2}\}$, 
$V(F')$ will have distinguished vertex set $Z\subseteq \{\delta_0,\delta_1,\dots, \delta_{t'}\}$, with $|Z| = t$ and $Z$ containing the set
$$\{\delta_0,\delta_1, \delta_2, \delta_{a_3}, \delta_{a_4},\dots, \delta_{a_{k-2}}\},$$ 
where $\delta_{a_i}$ is at the $(b_i+1)^{\text{st}}$ position in $Z$.

However, this implies that $c^{-1}(0)\subseteq [N]^{(k-1)}$ contains a member of $\cF_{I'}^{(k-1)}(t)$ which contradicts \ref{it1: inductionstepi} of Proposition \ref{prop: inductionstep}. This completes the analysis of Case I.2. 
\\[0.3cm]
    \noindent\underline{\textbf{Case I.3: $X^I$ forms a left comb:}} In this case, we focus on the subset $\{x_0< x_1< x_2< x_{a_3}\}$ of $X^I$, which also forms a left comb. Before we formally analyze this case, we provide a brief summary. Unlike in the previous cases, here in Claim \ref{claim: CaseI.2St}, we will find $Z\subseteq V(F)$ with  $x_{k-1}\le Z\le x_{a_{3-1}}$ which forms either a left comb or a right comb. In particular, we will show that if $x_m$ is the least vertex such that $\{x_1, x_m, x_{a_3}\}$ is a right comb, then $\{x_{k-1},\dots, x_m\}$ is a left comb and $\{x_m,\dots, x_{a_3-1}\}$ is a right comb. Depending on whether $x_m$ is closer to $x_{k-1}$ or $x_{a_3}$, we obtain a large right comb or left comb respectively. 
    
    We recall that $n$ is chosen to be sufficiently large, $I = \{1,2,a_3, \dots, a_{k-1}\}\in [n]^{(k-1)}$ and $t = n/8k$ and thus the following relation holds: 
        \begin{align}
        \label{eqn: CaseI.2St1}
            a_3 -a_2 = a_3-2 \geq \frac{n}{2k} = 4t > 4k.
        \end{align}
    \begin{claim}
    \label{claim: CaseI.2St}
        There exists an integer $m$ with $k-1\le m\le a_3$ such that the following holds: 
        \begin{enumerate}[label = (\roman*)]
            \item\label{it: CaseI.2Sti} If $m < a_3-t$, then $\{x_0\} \dcup \{x_m,\dots, x_{a_{3}-1}, x_{a_3}\}$ forms a right comb.
             \item\label{it: CaseI.2Stii} If $m\ge a_3 -t$, then $\{x_0, x_1, x_2,x_{k-1}, x_k,\dots, x_{m-1}\} \dcup \{x_{a_3}\}$ forms a left comb. 
        \end{enumerate}
    \end{claim}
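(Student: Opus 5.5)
We take $u:=a(x_1,x_{a_3})$ and let $m$ be the smallest index in $\{2,\dots,a_3\}$ with $x_m\in R(u)$; such an index exists because $x_{a_3}\in R(u)$. The point is that $u$ splits the interval $x_1<\dots<x_{a_3}$ into a left part and a right part, and that every edge of $F$ has color $0$. By (\ref{eqn: coloring4}), an edge of color $0$ is either a balanced split or a comb, so \emph{no} edge of $F$ is a $(k-1,1)$-split or a $(1,k-1)$-split. If $m<k-1$ we replace $m$ by $k-1$. Neither part below needs minimality, only that $x_j\in R(u)$ for $m\le j\le a_3$ and $x_j\in L(u)$ for $1\le j<m$.

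First we record the basic facts. The leaves of $R(u)$ that lie in $[x_1,x_{a_3}]$ form a final segment of that interval. Hence $x_j\in R(u)$ for all $m\le j\le a_3$, and $x_j\in L(u)$ for $1\le j<m$. Equivalently, $\{x_1,x_j,x_{a_3}\}$ is a right comb exactly when $j\ge m$. Since $X^I$ is a left comb, $\{x_1,x_2,x_{a_3}\}$ is a left comb, so $m\ge 3$. Moreover $\delta(x_0,x_1)>\delta(x_1,x_2)\ge\delta(x_1,x_{a_3})=\pi(u)$, so $x_0\in L(u)$ as well.

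For \ref{it: CaseI.2Sti} we apply Fact \ref{fact: bigfact} to $y_0=x_0$, $y_1=x_1$ and $y_2,\dots,y_\ell=x_m,\dots,x_{a_3}$. Then $\ell=a_3-m+2>t\ge k$ by (\ref{eqn: CaseI.2St1}). Condition \ref{it: bigfactb} holds because $x_1\in L(u)$ and $x_m,x_{a_3}\in R(u)$, so $\{x_1,x_m,x_{a_3}\}$ is a right comb. Condition \ref{it: bigfacta} holds because each $x_J$ with $J\subseteq\{m,\dots,a_3\}$ satisfies $x_0\le x_J\le x_1$ by (\ref{eqn: VFord}), and $X^J$ is not a $(1,k-1)$-split. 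Fact \ref{fact: bigfact} then gives that $\{x_0,x_m,\dots,x_{a_3}\}$ is a right comb.

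For \ref{it: CaseI.2Stii} we argue as in the base case, now using that there are no $(k-1,1)$-splits. Take any $J\subseteq\{2,\dots,m-1\}\cup\{a_3\}$ with $a_3\in J$. Its vertices $x_J$ and $x_j$ for $j\in J\setminus\{a_3\}$ lie in $L(u)$, while $x_{a_3}\in R(u)$. Thus $|X^J_L|=k-1$ and $|X^J_R|=1$, so $X^J$ must be a left comb; this needs $m-2\ge k-2$, which is why we take $m\ge k-1$ (or else the statement is vacuous). To get $\delta(x_{j-1},x_j)>\delta(x_j,x_{j+1})$ for $2\le j\le m-2$, choose $J\ni j,j+1,a_3$ with $\min J=j$. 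The left comb $X^J$ gives $\delta(x_J,x_j)>\delta(x_j,x_{j+1})$. Since $x_J\le x_1\le x_{j-1}<x_j$, (\ref{eqn: mindelta}) gives $\delta(x_J,x_j)\le\delta(x_{j-1},x_j)$. A similar edge with $\max(J\setminus\{a_3\})=m-1$ gives $\delta(x_{m-2},x_{m-1})>\delta(x_{m-1},x_{a_3})$. Finally $X^I$ gives $\delta(x_0,x_1)>\delta(x_1,x_2)$. Together these show that $\{x_0,x_1,\dots,x_{m-1},x_{a_3}\}$ is a left comb, and by Remark \ref{fact: combstructure} so is the subset in \ref{it: CaseI.2Stii}.

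The main obstacle is the bookkeeping in \ref{it: CaseI.2Stii}: we must choose, for each consecutive pair, an edge $J$ with enough indices (hence $m\ge k-1$ and $k\ge4$) so that the comparison via (\ref{eqn: mindelta}) runs in the right direction. The hypothesis $m<a_3-t$ in \ref{it: CaseI.2Sti}, respectively $m\ge a_3-t$ in \ref{it: CaseI.2Stii}, only serves to make the resulting comb long; the argument above proves each comb statement for every $m$.
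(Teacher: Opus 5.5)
Your approach is the paper's. Your $m$ (the least index with $x_m\in R(u)$, then raised to $k-1$) is exactly the paper's least $m\ge k-1$ with $\{x_1,x_m,x_{a_3}\}$ a right comb. Part (i) uses the same application of Fact~\ref{fact: bigfact}, and it is fine as written; your count $\ell=a_3-m+2$ is the correct one. In (ii) you use the same key observation: an edge $X^J$ with every vertex except $x_{a_3}$ in $L(u)$ must be a left comb, because color $0$ excludes $(k-1,1)$-splits.

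The step that fails as written is the bookkeeping in (ii). You ask for a $(k-1)$-set $J\subseteq\{2,\dots,m-1\}\cup\{a_3\}$ with $j,j+1,a_3\in J$ and $\min J=j$. Then $J\setminus\{a_3\}$ is a $(k-2)$-subset of $\{j,\dots,m-1\}$, which has only $m-j$ elements. So for $k\ge 5$ no such $J$ exists when $m-k+3\le j\le m-2$; for instance $j=m-2$ already fails for $k=5$. The inequalities $\delta(x_{j-1},x_j)>\delta(x_j,x_{j+1})$ for these $j$ are therefore not established, and without them you do not get the full left comb.

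The repair is immediate, and it is what the paper's induction step does implicitly with $J=\{3,\dots,k-2,\ell-1,\ell,a_3\}$. Drop the condition $\min J=j$ and take any $J\ni j,j+1,a_3$, padding with indices below $j$. This is possible because case (ii) forces $m\ge a_3-t\ge 3t+2>k$.
\begin{itemize}
\item Since $X^J$ is a left comb, its subset $\{x_J,x_j,x_{j+1}\}$ is a left comb by Remark~\ref{fact: combstructure}. So you still get $\delta(x_J,x_j)>\delta(x_j,x_{j+1})$.
\item Since $x_J\le x_{j-1}<x_j$, (\ref{eqn: mindelta}) gives $\delta(x_J,x_j)\le\delta(x_{j-1},x_j)$.
\end{itemize}
The rest of your argument then goes through unchanged.

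Two smaller inaccuracies are harmless but should be stated correctly:
\begin{itemize}
\item (ii) needs $m\ge k$, not $m\ge k-1$, since $\{2,\dots,m-1\}$ must contain $k-2$ indices.
\item After replacing $m$ by $k-1$, the property ``$x_j\in L(u)$ for $j<m$'' can fail.
\end{itemize}
Both are irrelevant in the end. In case (ii) we have $m\ge a_3-t\ge 3t+2$, so no replacement occurred there.
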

    \begin{proof}
        Let $x_m\in \{x_{k-1}<\cdots < x_{a_3-1}\}$ be the least vertex such that $\set{x_1,x_m,x_{a_3}}$ forms a right comb. If no such $x_m$ exists, we set $x_m=x_{a_3}$.

        First, we consider the case where $m < a_3-t$. Let   $Z=\set{x_m,\ldots,x_{a_3-1}}$ and note that $|Z| = a_3 - m>t$. We will show that in this case $\{x_0\} \dcup Z\dcup  \{x_{a_3}\}$ forms a right comb. To this end, let
        $$y_0=x_0,\; y_1=x_1 \quad  \text{and} \quad y_2 = x_m,\; y_3 = x_{m+1},\; \dots,\; y_\ell = x_{a_3}.$$ 
        We now apply Fact \ref{fact: bigfact} with $Y = \{y_0,y_1,\dots,y_\ell\}.$ For this we need to verify its assumptions. Note that in view of (\ref{eqn: CaseI.2St1}) and the assumption that $m< a_3-t$, we have $\ell = a_3 - m +3> t+3\geq k$. 
        \begin{itemize}
            \item Condition \ref{it: bigfactb} holds, since $\set{x_1,x_m,x_{a_3}}$ forms a right comb by the definition of $x_m$.  
            \item   Condition \ref{it: bigfacta} follows from  $(F, <)$ being monochromatic in color 0. Indeed, for all $J\in \set{m,\ldots,a_3}^{(k-1)}$, $x_J$ satisfies $x_0\le x_J\le x_1$ and $X^J= \set{x_J}\cup \set{x_j:j\in J}$ is of color 0, so in view of (\ref{eqn: coloring4}), does not form a $(1,k-1)$-split. 
        \end{itemize}
        Consequently, Fact \ref{fact: bigfact} implies that $\set{x_0,x_m,\ldots,x_{a_3}}$ forms a right comb, proving \ref{it: CaseI.2Sti}. 
\\[0.3cm]
        Next, we consider the case when $m\ge a_3 -t$. Let $Z=\set{x_{k-1},\ldots,x_{m-1}}$. 
        We will now show that for $k-1\le \ell\le m$, the set
        $$\set{x_0,x_1,x_2}\dcup \{x_{k-1}, \dots, x_{\l}\}\dcup \set{x_{a_3}}$$ 
        forms a left comb, by induction on $\l$. Recall that by the assumption of Case I.3, we have that  $\set{x_0,x_1,x_2,x_{a_3}}\subseteq X^I$ forms a left comb. 
        
      We first show the base case $\ell = k-1$, i.e., that the set $\set{x_0,x_1,x_2,x_{k-1},x_{a_3}}$ forms a left comb. To this end, consider  the $(k-1)$-set $J=\set{2,\ldots,k-1,a_3}$. First, we claim that the set $X^J = \{x_J\}\cup \{x_j:j\in J\}$ forms a left comb. Since $\{x_0, x_1, x_2, x_{a_3}\}\subseteq X^I$ forms a left comb and $x_0 \le x_J \le x_1$, we have that for $u = a(x_0,x_{a_3})$:
      $$\{x_0, x_J, x_1\} \subseteq L(u) \text{ and } x_{a_3} \in R(u).$$
      This implies $u = a(x_1, x_{a_3})$ as well. By definition of $m$,
      Since we assume in this case that $m\ge a_3 -t$, we have that $k-1<m$. So by the definition of $m$, $\set{x_1,x_{k-1},x_{a_3}}$ forms a left comb as well. This implies that $x_{k-1}\in L(u)$. Consequently, we obtain that $\{x_J<x_2 < \cdots < x_{k-1}\}\subseteq L(u)$ while $x_{a_3}\in R(u)$, and thus
        \[|X^J_L(u)|=k-1 \quad \text{and} \quad |X^J_R(u)|=1.\]
        Since $X^J$ has color 0, by (\ref{eqn: coloring4}) it cannot form a $(k-1,1)$-split and so $X^J$ must form a left comb. In particular, $\set{x_J,x_{2},x_{k-1},x_{a_3}}\subseteq X^J$ forms a left comb, or equivalently,
        $$\delta(x_J,x_2)>\delta(x_2, x_{k-1}) > \delta(x_{k-1}, a_3).$$
        Since $x_J\le x_1 < x_2$, by (\ref{eqn: mindelta}) we have that $\delta(x_1, x_2)\ge \delta(x_J, x_2)$. 
        By assumption, the set $\set{x_0,x_1,x_2}\subseteq X^I$ forms a left comb as well, i.e., $\delta(x_0, x_1)>\delta(x_1,x_2).$
        Consequently,
        $$\delta(x_0,x_1) > \delta(x_1,x_2)> \delta(x_2,x_{k-1}) > \delta(x_{k-1}, x_{a_3}),$$ 
        or equivalently,  $\set{x_0,x_1,x_2,x_{k-1}, x_{a_3}}$ also forms a left comb. This completes the base case. 
        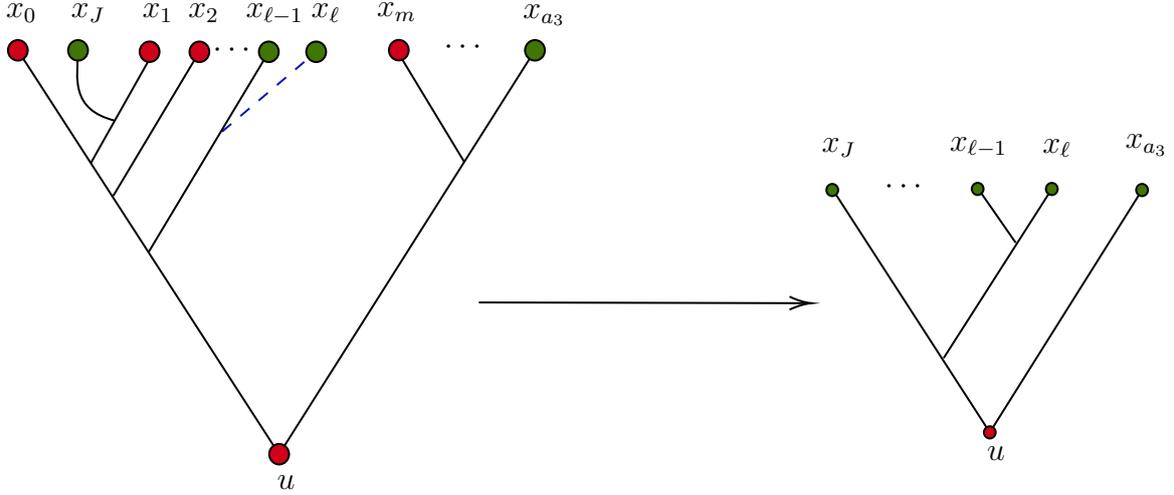
\begin{figure}[h!]
\centering
\tikzset{every picture/.style={line width=0.75pt}} 

\begin{tikzpicture}[x=0.75pt,y=0.75pt,yscale=-1,xscale=1]

\draw    (50.21,123.52) -- (180.63,326.66) ;
\draw    (308.32,123.52) -- (180.63,326.66) ;
\draw    (117.53,125.02) -- (86.49,180.48) ;
\draw  [fill={rgb, 255:red, 208; green, 2; blue, 27 }  ,fill opacity=1 ] (175.69,326.66) .. controls (175.69,323.81) and (177.9,321.5) .. (180.63,321.5) .. controls (183.35,321.5) and (185.57,323.81) .. (185.57,326.66) .. controls (185.57,329.5) and (183.35,331.81) .. (180.63,331.81) .. controls (177.9,331.81) and (175.69,329.5) .. (175.69,326.66) -- cycle ;
\draw  [fill={rgb, 255:red, 208; green, 2; blue, 27 }  ,fill opacity=1 ] (45.28,123.52) .. controls (45.28,120.67) and (47.49,118.36) .. (50.21,118.36) .. controls (52.94,118.36) and (55.15,120.67) .. (55.15,123.52) .. controls (55.15,126.37) and (52.94,128.68) .. (50.21,128.68) .. controls (47.49,128.68) and (45.28,126.37) .. (45.28,123.52) -- cycle ;
\draw  [fill={rgb, 255:red, 208; green, 2; blue, 27 }  ,fill opacity=1 ] (111.01,124.23) .. controls (111.01,121.38) and (113.22,119.07) .. (115.95,119.07) .. controls (118.68,119.07) and (120.89,121.38) .. (120.89,124.23) .. controls (120.89,127.08) and (118.68,129.39) .. (115.95,129.39) .. controls (113.22,129.39) and (111.01,127.08) .. (111.01,124.23) -- cycle ;
\draw  [fill={rgb, 255:red, 65; green, 117; blue, 5 }  ,fill opacity=1 ] (303.38,123.52) .. controls (303.38,120.67) and (305.59,118.36) .. (308.32,118.36) .. controls (311.05,118.36) and (313.26,120.67) .. (313.26,123.52) .. controls (313.26,126.37) and (311.05,128.68) .. (308.32,128.68) .. controls (305.59,128.68) and (303.38,126.37) .. (303.38,123.52) -- cycle ;
\draw [color={rgb, 255:red, 14; green, 18; blue, 193 }  ,draw opacity=1 ] [dash pattern={on 4.5pt off 4.5pt}]  (152.18,164.4) -- (199.11,124.26) ;
\draw    (238.81,122.66) -- (273.04,179.63) ;
\draw  [fill={rgb, 255:red, 208; green, 2; blue, 27 }  ,fill opacity=1 ] (235.44,123.45) .. controls (235.44,120.6) and (237.65,118.29) .. (240.38,118.29) .. controls (243.11,118.29) and (245.32,120.6) .. (245.32,123.45) .. controls (245.32,126.3) and (243.11,128.6) .. (240.38,128.6) .. controls (237.65,128.6) and (235.44,126.3) .. (235.44,123.45) -- cycle ;
\draw    (80.25,123.54) .. controls (79.72,137.62) and (76.57,153.37) .. (98.63,158.88) ;
\draw  [fill={rgb, 255:red, 65; green, 117; blue, 5 }  ,fill opacity=1 ] (75.31,123.54) .. controls (75.31,120.69) and (77.52,118.38) .. (80.25,118.38) .. controls (82.98,118.38) and (85.19,120.69) .. (85.19,123.54) .. controls (85.19,126.39) and (82.98,128.7) .. (80.25,128.7) .. controls (77.52,128.7) and (75.31,126.39) .. (75.31,123.54) -- cycle ;
\draw    (140.83,124.26) -- (97.84,196.69) ;
\draw  [fill={rgb, 255:red, 208; green, 2; blue, 27 }  ,fill opacity=1 ] (135.9,124.26) .. controls (135.9,121.41) and (138.11,119.1) .. (140.83,119.1) .. controls (143.56,119.1) and (145.77,121.41) .. (145.77,124.26) .. controls (145.77,127.11) and (143.56,129.42) .. (140.83,129.42) .. controls (138.11,129.42) and (135.9,127.11) .. (135.9,124.26) -- cycle ;
\draw  [fill={rgb, 255:red, 65; green, 117; blue, 5 }  ,fill opacity=1 ] (194.17,124.26) .. controls (194.17,121.41) and (196.38,119.1) .. (199.11,119.1) .. controls (201.84,119.1) and (204.05,121.41) .. (204.05,124.26) .. controls (204.05,127.11) and (201.84,129.42) .. (199.11,129.42) .. controls (196.38,129.42) and (194.17,127.11) .. (194.17,124.26) -- cycle ;
\draw    (175.49,124.26) -- (115.42,225.09) ;
\draw  [fill={rgb, 255:red, 65; green, 117; blue, 5 }  ,fill opacity=1 ] (170.55,124.26) .. controls (170.55,121.41) and (172.76,119.1) .. (175.49,119.1) .. controls (178.21,119.1) and (180.43,121.41) .. (180.43,124.26) .. controls (180.43,127.11) and (178.21,129.42) .. (175.49,129.42) .. controls (172.76,129.42) and (170.55,127.11) .. (170.55,124.26) -- cycle ;
\draw    (280.33,250.33) -- (444.33,250.71) ;
\draw [shift={(446.33,250.71)}, rotate = 180.13] [color={rgb, 255:red, 0; green, 0; blue, 0 }  ][line width=0.75]    (10.93,-3.29) .. controls (6.95,-1.4) and (3.31,-0.3) .. (0,0) .. controls (3.31,0.3) and (6.95,1.4) .. (10.93,3.29)   ;
\draw    (457.75,194.78) -- (535.37,315.69) ;
\draw    (611.37,193.78) -- (535.37,315.69) ;
\draw  [fill={rgb, 255:red, 208; green, 2; blue, 27 }  ,fill opacity=1 ] (532.43,315.69) .. controls (532.43,314) and (533.75,312.62) .. (535.37,312.62) .. controls (536.99,312.62) and (538.31,314) .. (538.31,315.69) .. controls (538.31,317.39) and (536.99,318.76) .. (535.37,318.76) .. controls (533.75,318.76) and (532.43,317.39) .. (532.43,315.69) -- cycle ;
\draw  [fill={rgb, 255:red, 65; green, 117; blue, 5 }  ,fill opacity=1 ] (453.81,193.78) .. controls (453.81,192.09) and (455.13,190.71) .. (456.75,190.71) .. controls (458.37,190.71) and (459.69,192.09) .. (459.69,193.78) .. controls (459.69,195.48) and (458.37,196.85) .. (456.75,196.85) .. controls (455.13,196.85) and (453.81,195.48) .. (453.81,193.78) -- cycle ;
\draw  [fill={rgb, 255:red, 65; green, 117; blue, 5 }  ,fill opacity=1 ] (608.43,193.78) .. controls (608.43,192.09) and (609.75,190.71) .. (611.37,190.71) .. controls (613,190.71) and (614.31,192.09) .. (614.31,193.78) .. controls (614.31,195.48) and (613,196.85) .. (611.37,196.85) .. controls (609.75,196.85) and (608.43,195.48) .. (608.43,193.78) -- cycle ;
\draw    (530.31,194.22) -- (548.33,220.33) ;
\draw  [fill={rgb, 255:red, 65; green, 117; blue, 5 }  ,fill opacity=1 ] (526.37,193.22) .. controls (526.37,191.53) and (527.69,190.15) .. (529.31,190.15) .. controls (530.93,190.15) and (532.25,191.53) .. (532.25,193.22) .. controls (532.25,194.92) and (530.93,196.3) .. (529.31,196.3) .. controls (527.69,196.3) and (526.37,194.92) .. (526.37,193.22) -- cycle ;
\draw    (566.33,193.26) -- (512.33,278.33) ;
\draw  [fill={rgb, 255:red, 65; green, 117; blue, 5 }  ,fill opacity=1 ] (563.39,193.26) .. controls (563.39,191.57) and (564.71,190.19) .. (566.33,190.19) .. controls (567.96,190.19) and (569.27,191.57) .. (569.27,193.26) .. controls (569.27,194.96) and (567.96,196.33) .. (566.33,196.33) .. controls (564.71,196.33) and (563.39,194.96) .. (563.39,193.26) -- cycle ;

\draw (178.21,335.88) node [anchor=north west][inner sep=0.75pt]    {$u$};
\draw (42.91,98.43) node [anchor=north west][inner sep=0.75pt]    {$x_{0}$};
\draw (110.99,98.6) node [anchor=north west][inner sep=0.75pt]    {$x_{1}$};
\draw (301.02,98.81) node [anchor=north west][inner sep=0.75pt]    {$x_{a_{3}{}}$};
\draw (261.45,116.8) node [anchor=north west][inner sep=0.75pt]    {$\cdots $};
\draw (227.96,98.04) node [anchor=north west][inner sep=0.75pt]    {$x_{m}$};
\draw (75.41,98.43) node [anchor=north west][inner sep=0.75pt]    {$x_{J}$};
\draw (133.83,98.6) node [anchor=north west][inner sep=0.75pt]    {$x_{2}$};
\draw (162.58,98.6) node [anchor=north west][inner sep=0.75pt]    {$x_{\l-1}$};
\draw (195.32,98.6) node [anchor=north west][inner sep=0.75pt]    {$x_{\l}$};
\draw (146.11,118.16) node [anchor=north west][inner sep=0.75pt]    {$\cdots $};
\draw (532.5,321.32) node [anchor=north west][inner sep=0.75pt]    {$u$};
\draw (450.18,166.37) node [anchor=north west][inner sep=0.75pt]    {$x_{J}$};
\draw (601.56,164.4) node [anchor=north west][inner sep=0.75pt]    {$x_{a_{3}{}}$};
\draw (513.96,165.47) node [anchor=north west][inner sep=0.75pt]    {$x_{\l-1}$};
\draw (560.29,166.47) node [anchor=north west][inner sep=0.75pt]    {$x_{\l}$};
\draw (481.45,186.8) node [anchor=north west][inner sep=0.75pt]    {$\cdots $};
\draw (295,217) node [anchor=north west][inner sep=0.75pt]   [align=left] {};

\end{tikzpicture}
            
            \caption{If $x_\l$ is as shown in the figure, then $X^J$ forms a $(k-1,1)$-split, contradicting that $X^J$ has color 0.}
            \label{fig: casei.2}
        \end{figure}
         
        Next, we continue with the induction step. Here we only provide a ``proof by picture'', omitting the formal details, since they follow similarly to the base case above. Assume that for some $\ell$ with $k \le \l< m$, the set
        $$\{x_0\}\dcup \{x_1,x_2,x_{k-1},\ldots,x_{\l-1}\}\dcup \{x_{a_3}\}$$ forms a left comb.   If the above set together with $x_\l$, i.e., 
        $$\{x_0\}\dcup \{x_1,x_2,x_{k-1},\ldots,x_{\l-1},x_\l\}\dcup \{x_{a_3}\}$$ 
         does not form a left comb (as shown in Figure \ref{fig: casei.2}), then it contradicts the assumption that $(F,<)$ is monochromatic in color 0.  Indeed, consider the $(k-1)$-set $J=\set{3,\ldots,k-2,\l-1, \l,a_3}$ and let $u = a(x_0, x_{a_3})$. Note that since $\l < m$, the set $\set{x_1,x_\l,x_{a_3}}$ forms a left comb by definition of $m$. Consequently, as shown in the picture, $x_{\l} \in L(u)$. Observe that the set  $X^J$  (the green vertices in Fig. \ref{fig: casei.2}) must then form a $(k-1,1)$-split, which  in view of (\ref{eqn: coloring4}) contradicts that  $\chi_c(X^J)=0$.

    \end{proof}
 We will now use Claim \ref{claim: CaseI.2St} together with Propositions \ref{prop: leftcomb} and \ref{prop: rightprojection} to find a member of the forbidden family in the projection. Note that as a consequence of Claim \ref{claim: CaseI.2St} above, we obtain a set $Z\subseteq \{x_{k-1}, \dots, x_{a_{3}-1}\}$ of size $t$ such that either:
 \begin{enumerate}
     \item $\{x_0\} \dcup Z\dcup \{x_{a_3}\}$ forms a right comb, or
     \item $\{x_0,x_1,x_2\} \dcup Z \dcup \{x_{a_3}\}$ forms a left comb. 
 \end{enumerate}
 Once again, recall that we set $t = n/8k$  and that we were given $I'\in [t]^{(k-2)} $ in (\ref{eqn: tI'}). 
    \begin{figure}[h]
        \centering

\tikzset{every picture/.style={line width=0.75pt}} 


        \caption{Case I.3: Here we consider the case when we find a large right comb. This gives a monochromatic copy of $(F',<)\in \cF_{I'}^{(k)}(t)$ in color $0$ in the projection.}
        \label{fig: CaseI.3Right}
    \end{figure}

    \begin{figure}[h]
        \centering

\tikzset{every picture/.style={line width=0.75pt}} 

%
        \caption{Case I.3: Here we consider the case when we find a large left comb. This gives a copy of $K_{t+2}^{(k-1)}$ in the projection. }
        \label{fig: CaseI.3Left}
    \end{figure}
    
    \begin{claim}
    \label{claim: CaseI.2Proj}
        The coloring $c:[N]^{(k-1)}\to \Z_4$ either contains a monochromatic copy of $(F',<)\in \cF^{(k-1)}_{I'}(t)$ in color 0 or a monochromatic copy of $K_{t+2}^{(k-1)}$.  
    \end{claim}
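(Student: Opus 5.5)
We split according to the two alternatives produced by Claim \ref{claim: CaseI.2St}, i.e.\ according to whether $m<a_3-t$ or $m\ge a_3-t$. In each alternative we keep $t$ suitable vertices of the comb and feed them into Proposition \ref{prop: rightprojection} or Proposition \ref{prop: leftcomb}.

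\emph{Right comb case ($m<a_3-t$).} Here $\{x_0\}\dcup\{x_m,\dots,x_{a_3}\}$ is a right comb. We imitate Claim \ref{claim: CaseI.1Proj}: with $\ell=t+1$, let $f:[\ell]\to\{m,m+1,\dots,m+t-1,a_3\}$ be the increasing bijection, and set $y_0=x_0$, $y_i=x_{f(i)}$. The subset $\tilde Y=\{y_0,\dots,y_\ell\}$ of a right comb is again a right comb, so \ref{it: righta} holds. Put $I_{\ref{prop: rightprojection}}=I'\cup\{\ell\}$. For $J_{\ref{prop: rightprojection}}$ we take $y_{J_{\ref{prop: rightprojection}}}=x_{f(J_{\ref{prop: rightprojection}})}$; since $f(J_{\ref{prop: rightprojection}})\subseteq\{m,\dots,a_3\}\subseteq\{2,\dots,n\}$, (\ref{eqn: VFord}) gives $y_0\le y_{J_{\ref{prop: rightprojection}}}\le x_1<y_1$, so \ref{it: rightb} holds.

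Condition \ref{it: rightc} needs $y_{I_{\ref{prop: rightprojection}}}<y_1$ with $a(y_{I_{\ref{prop: rightprojection}}},y_1)=a(y_0,y_\ell)$. This is the point I expect to need the most care. Every $x_J$ with $J\subseteq\{m,\dots,a_3\}$ satisfies $x_J\le x_1$, and $\{x_1,x_m,x_{a_3}\}$ is a right comb. Hence all of $x_0\le x_J\le x_1$ lie in the left subtree of $u=a(x_0,x_{a_3})$, while $x_m,\dots,x_{a_3}$ lie in the right subtree; this uses (\ref{eqn: mindelta}) as in the proof of Fact \ref{fact: bigfact}. Consequently $a(x_J,x_m)=u=u_{\tilde Y}$, so \ref{it: rightc} holds. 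Condition $(\star)$ holds because every edge $X^J$ has color $0$. Proposition \ref{prop: rightprojection} then yields a copy in $c^{-1}(0)$ of a member of $\cF^{(k-1)}_{I'}(t)$, since $I_{\ref{prop: rightprojection}}\setminus\max I_{\ref{prop: rightprojection}}=I'$.

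\emph{Left comb case ($m\ge a_3-t$).} Here $\{x_0,x_1,x_2,x_{k-1},\dots,x_{m-1},x_{a_3}\}$ is a left comb. Since $m-1-(k-1)\ge a_3-t-k\ge 2t$ by (\ref{eqn: CaseI.2St1}), we may choose a subset $Y=\{y_0<y_1<\dots<y_{t+2}\}$ of this comb with $y_0=x_0$, $y_1=x_1$, $y_2=x_2$, and all other $y_i$ drawn from $\{x_{k-1},\dots,x_{m-1}\}\cup\{x_{a_3}\}$. By Remark \ref{fact: combstructure}, $Y$ is a left comb, so \ref{it: leftlema} holds. For $J_{\ref{prop: leftcomb}}\in\{2,\dots,t+2\}^{(k-1)}$, let $y_{J_{\ref{prop: leftcomb}}}$ be $x_J$ for the corresponding index set $J\subseteq\{2,\dots,n\}$. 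Then $x_0\le x_J\le x_1$, so \ref{it: leftlemb} holds, and all these edges have color $0$, so \ref{it: leftlemmac} holds.

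Proposition \ref{prop: leftcomb} now gives that $\pi(Y\setminus\{y_0\})^{(k-1)}$ is monochromatic (in color $3$). As $|\pi(Y\setminus\{y_0\})|=t+1$, this is a monochromatic clique on $t+1$ vertices. To reach the stated $K^{(k-1)}_{t+2}$ we need one more vertex, so we take one more comb vertex, which the slack $2t$ allows. Thus $c$ contains a monochromatic copy of $K^{(k-1)}_{t+2}$, completing the proof of the claim.
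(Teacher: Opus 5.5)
Your overall route is the paper's. You split along Claim~\ref{claim: CaseI.2St}. You treat the right comb exactly as in Claim~\ref{claim: CaseI.1Proj}, with $y_0=x_0$, $t$ consecutive vertices starting at $x_m$, $y_{t+1}=x_{a_3}$ and $I_{\ref{prop: rightprojection}}=I'\cup\{t+1\}$. In the left-comb alternative you apply Proposition~\ref{prop: leftcomb} to $t+4$ comb vertices $x_0,x_1,x_2,\dots,x_{a_3}$; your ``one more vertex'' step just brings you to the paper's $\ell=t+3$. The left-comb half is fine, and so are conditions (a), (b) and $(\star)$ in the right-comb half.

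The one step that fails as written is your justification of condition (c). The facts you invoke are that $\{x_1,x_m,x_{a_3}\}$ and $\{x_0\}\cup\{x_m,\dots,x_{a_3}\}$ are right combs. These do not put $x_1$, or the vertices $x_J$ with $x_0\le x_J\le x_1$, into $L(u)$ for $u=a(x_0,x_{a_3})$. For a counter-configuration, put $x_0\in L(u)$ and $x_1,x_m,\dots,x_{a_3}\in R(u)$. Let $w=a(x_1,x_m)$ with $x_1\in L(w)$, and arrange $x_m,\dots,x_{a_3}$ as a right comb inside $R(w)$. Both sets are then right combs, but $a(x_1,x_m)=w\neq u$. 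The (\ref{eqn: mindelta}) computation from Fact~\ref{fact: bigfact} only gives $a(x_J,x_m)=a(x_J,x_{a_3})$, not that this vertex is $u$.

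The missing ingredient is the standing hypothesis of Case I.3: $\{x_0,x_1,x_2,x_{a_3}\}\subseteq X^I$ is a left comb. This gives $\delta(x_0,x_1)>\delta(x_2,x_{a_3})=\delta(x_0,x_{a_3})=\pi(u)$. By (\ref{eqn: mindelta}), $\delta(x_0,x_J)\ge\delta(x_0,x_1)>\pi(u)$, so $x_J\in L(u)$. Also $x_m\in R(u)$, since $a(x_0,x_m)=a(x_0,x_{a_3})$ in the right comb. Together these give $a(x_{f(I_{\ref{prop: rightprojection}})},x_m)=u=u_{\tilde Y}$, which is (c). This is the Case I.3 analogue of using $x_1\in X^I_L(u)$ from the balanced split in Case I.1. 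With it inserted, your proof goes through.
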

    \begin{proof}
        In view of Claim \ref{claim: CaseI.2St}, we either obtain the right comb $\{x_0\} \dcup Z\dcup \{a_3\}$ (see Fig. \ref{fig: CaseI.3Right}) or the left comb $\{x_0, x_1, x_2\}\dcup Z\dcup \{x_{a_3}\}$ (see Fig. \ref{fig: CaseI.3Left}). 
        
        We first consider the case where $Y = \{x_0\} \dcup Z\dcup \{x_{a_3}\}$ is a right comb (Fig. \ref{fig: CaseI.3Right} above), and let $\l = |Z|+1 = t+1$ and 
        $$y_0 = x_0,\,\quad \{y_1< y_2<\cdots < y_{\ell-1}\} = Z,\quad y_{\ell} = x_{a_3}.$$
        We apply Proposition \ref{prop: rightprojection} with $Y=\{y_0,\dots, y_\ell\}$,  $I_{\ref{prop: rightprojection}} = I'\cup \{t+1\} = I'\cup\{\ell\}$ and $c$, $k$, $N$ as in Proposition \ref{prop: inductionstep}. Verifying conditions \ref{it: righta}--\ref{it: rightc} and ($\star$) in the statement of Proposition \ref{prop: rightprojection} follows similarly as in Claims \ref{claim: CaseI.1Proj} and \ref{claim: CaseI.3Proj}  in Cases I.1 and I.2, and so we omit the details here. As a consequence of Proposition \ref{prop: rightprojection}, we obtain that $c^{-1}(0)$ contains some $(F',<)\in \cF_{I'}^{(k-1)}(t)$. 
        
        Now  consider the case where $Y = \{x_0, x_1, x_2\}\dcup Z\dcup \{x_{a_3}\}$ is a left comb. Let $\ell = t+3$ and  
        \begin{align*}
            \{y_0 < y_1 < y_2\} = \{x_0 < x_1 < x_2\}, \quad 
            \{y_3 < \cdots < y_{\ell-1}\}= Z,\quad y_\l= x_{a_3}. 
        \end{align*}
        Let us fix $f$ to be the increasing bijection from $[0,\ell]$ such that  $y_i = x_{f(i)}$ for each $i\in \{0,\dots, \ell\}$. For example, such a bijection needs to satisfy $x_0 =y_0 = x_{f(0)}$ and hence $f(0)=0$. Similarly, $f(3) = j_0$ where $x_{j_0} = \min Z=y_3$. We now apply Proposition \ref{prop: leftcomb} on $Y= \{y_0,y_1,\dots, y_\ell\}$ with $c$, $k$, $N$ as in Proposition \ref{prop: inductionstep}. To this end, we verify conditions \ref{it: leftlema}--\ref{it: leftlemmac} in the statement of Proposition \ref{prop: leftcomb}. 
        \begin{itemize}
            \item[] \ref{it: leftlema} holds since $Y$ forms a left comb. 
            \item[] \ref{it: leftlemb} holds since for every $J\in \{2,\dots, \l\}^{(k-1)}$, we set $y_J = x_{f(J)}$ and we have (see also Fig. \ref{fig: CaseI.3Left} above) $$x_0=y_0\le y_J = x_{f(J)}\leq y_1 = x_1.$$ 
            \item[] \ref{it: leftlemmac} holds  since the sets $Y^J= \{y_J\} \cup \{y_j:j\in J\} = X^{f(J)}$ are edges of the monochromatic copy of $(F, <)$ and thus $\chi_c(Y^J) = 0$ for every $J \in \{2,\dots, \l\}^{(k-1)}$. 
        \end{itemize}
        Consequently, $c$ is constant over the set $\pi(Y\setminus \{y_0\})^{(k-1)}$. Note that $|Y| = t+4$. Consequently, $\pi(Y\setminus \{y_0\})$ has size $t+2$, and we find a monochromatic $K_{t+2}^{(k-1)}$ in the projection, as shown in Figure \ref{fig: CaseI.3Left}. 
    \end{proof}
Note that in view of above Claim \ref{claim: CaseI.2Proj}, if we obtain that $c^{-1}(0)$ contains $(F',<)\in \cF_{I'}^{(k-1)}(t)$, then it contradicts assumption \ref{it1: inductionstepi} of Proposition \ref{prop: inductionstep}. On the other hand, if Claim \ref{claim: CaseI.2Proj} yields a monochromatic $K_{t+2}^{(k-1)}$ in any color, it contains some member of $\cF_{I'}^{(k-1)}(t)$ and some member of $\rev \cF_{I'}^{(k-1)}(t)$ as ordered subgraphs. This contradicts either \ref{it1: inductionstepi} or \ref{it2: inductionstepii} of Proposition \ref{prop: inductionstep}. 
\\[0.3cm]
\noindent \underline{\textbf{Case II: When $(F, < )$ is color 1}}:  Now we consider the case when $\chi_c$ contains a monochromatic copy of $(F, <)$ in color 1.  In this case, we have $\chi_c(X^I) = 1$, where 
   $$X^I = \{x_0\}\cup\{x_i:i\in I\} =  \{x_0\} \cup \{ x_1< x_2< x_{a_3}< \cdots < x_{a_{k-1}}\}.$$
In view of (\ref{eqn: coloring4}), this can occur in the following 3 situations:
    \begin{itemize}
        \item \textbf{Case II.1:}  $X^I$ forms a \textit{$(k-1,1)$-split}, 
         \item \textbf{Case II.2:} $X^I$ forms a \textit{left comb} and $c(\pi(X^I)) = 2$,
          \item \textbf{Case II.3:} $X^I$ forms a \textit{right comb} and $c(\pi(X^I))= 1$.
    \end{itemize}
    In each of these cases, we will show that one can arrive at a contradiction by using \ref{it1: inductionstepi} or \ref{it2: inductionstepii}. 
\\[0.3cm]
    \noindent\underline{\textbf{Case II.1 and II.2: $X^I$ forms a $(k-1,1)$-split or a left comb:}} We analyse these two cases together since in each of these two cases, for $u=u_{X^I}=a(x_0,x_{a_{k-1}})$, we have
    \begin{align}
    \label{eqn: CaseII.12split}
        X^I_L(u)= X^I\setminus \{x_{a_{k-1}}\} = \{x_0,x_1, x_2,x_{a_3},\dots, x_{a_{k-2}}\} \quad \text{and} \quad X^I_R(u)=\{x_{a_{k-1}}\}.
    \end{align}
    \begin{claim}
    \label{claim: CaseII.12St}
        The set $Y = \set{x_0,x_{a_{k-2}},x_{a_{k-1}},x_{a_{k-1}+1},\ldots,x_{n}}$ forms a left comb.
    \end{claim}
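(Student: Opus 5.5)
The plan is to mirror the proof of Fact \ref{fact: bigfact}, with left and right interchanged. The contradiction in the induction step will come from \emph{balanced splits} rather than $(1,k-1)$-splits. Since $(F,<)$ is monochromatic in color $1$, no edge $X^J$ has color $0$, so by (\ref{eqn: coloring4}) no $X^J$ forms a balanced split.

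\textbf{Base case.} The first step is to check that $\{x_0,x_{a_{k-2}},x_{a_{k-1}}\}$ is a left comb. By (\ref{eqn: CaseII.12split}) we have $x_0,x_{a_{k-2}}\in X^I_L(u)$ and $x_{a_{k-1}}\in X^I_R(u)$, where $u=a(x_0,x_{a_{k-1}})$. Hence $\delta(x_0,x_{a_{k-2}})>\pi(u)=\delta(x_{a_{k-2}},x_{a_{k-1}})$. More generally, every $x_i$ with $1\le i\le a_{k-2}$ lies between $x_1$ and $x_{a_{k-2}}$ and therefore belongs to $L(u)$. By (\ref{eqn: VFord}), every $x_J$ with $x_0\le x_J\le x_1$ also lies in $L(u)$.

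\textbf{Induction step.} Assume $\{x_0,x_{a_{k-2}},x_{a_{k-1}},\dots,x_j\}$ is a left comb for some $a_{k-1}\le j<n$. Suppose, for contradiction, that adding $x_{j+1}$ breaks this, i.e.\ $\delta(x_j,x_{j+1})$ is larger than the last $\delta$ of the comb. That last value is $\delta(x_{j-1},x_j)$, or $\delta(x_{a_{k-2}},x_{a_{k-1}})$ when $j=a_{k-1}$. Choose
$$J=\{2,3,\dots,k-3\}\cup\{a_{k-2},\,j,\,j+1\}\in\{2,\dots,n\}^{(k-1)}.$$
This is a genuine $(k-1)$-set because $k\ge4$ and $I$ is separated, so $k-3<a_{k-2}<a_{k-1}\le j$. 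Let $w=a(x_J,x_{j+1})$. By (\ref{eqn: mindelta}) applied along $x_J<x_2<\dots<x_{a_{k-2}}<x_j<x_{j+1}$, we get $\pi(w)=\delta(x_{a_{k-2}},x_j)$. The comb structure gives $\delta(x_{a_{k-2}},x_j)=\delta(x_{j'},x_j)$, where $x_{j'}$ is the predecessor of $x_j$ in the comb. This value is strictly smaller than $\delta(x_j,x_{j+1})$ by assumption. It is also strictly smaller than $\delta(x_J,x_{a_{k-2}})\ge\delta(x_0,x_{a_{k-2}})$, using the left-comb inequality together with (\ref{eqn: mindelta}). Consequently $x_J,x_2,\dots,x_{k-3},x_{a_{k-2}}\in L(w)$ and $x_j,x_{j+1}\in R(w)$, so $|X^J_L(w)|\ge2$ and $|X^J_R(w)|=2$. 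A comb has one side of size $1$, so $X^J$ is not a comb, and hence it is a balanced split of color $0$. This contradicts the fact that $X^J$ has color $1$.

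Therefore the new value $\delta(x_j,x_{j+1})$ is strictly smaller than the previous one, and the enlarged set is again a left comb. Carrying the induction up to $j+1=n$ gives the claim.

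The main obstacle is the bookkeeping in the middle step: verifying that $w$ really separates $\{x_J,\dots,x_{a_{k-2}}\}$ from $\{x_j,x_{j+1}\}$. This amounts to computing $\pi(w)$ via (\ref{eqn: mindelta}) and comparing it with both $\delta(x_J,x_{a_{k-2}})$ and $\delta(x_j,x_{j+1})$. The edge case $j=a_{k-1}$, where the predecessor of $x_j$ in the comb is $x_{a_{k-2}}$, should be handled identically.
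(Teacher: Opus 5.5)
Your proposal is correct and matches the paper's proof. It uses the same induction along $x_{a_{k-1}},\dots,x_n$, the same set $J=\{2,\dots,k-3\}\cup\{a_{k-2},j,j+1\}$, and the same contradiction that $X^J$ would be a balanced split and hence of color $0$. Your node $w=a(x_J,x_{j+1})$ is the paper's $u'=a(x_0,x_\ell)$; the only difference is that you locate it by computing $\pi(w)$ via (\ref{eqn: mindelta}), whereas the paper observes that $\{x_0,x_\ell,x_{\ell+1}\}$ forms a right comb.
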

    \begin{proof}
        We'll prove by induction on $\l$ that $\set{x_0,x_{a_{k-2}},x_{a_{k-1}},x_{a_{k-1}+1},\ldots,x_{\l}}$ forms a left comb. Note that  $\set{x_0,x_{a_{k-2}},x_{a_{k-1}}}$ forms a left comb in view of (\ref{eqn: CaseII.12split}). For the induction step, assume that the set
        $$Z= \set{x_0,x_{a_{k-2}},x_{a_{k-1}},x_{a_{k-1}+1},\ldots,x_{\l-1},x_{\l}}$$ 
        forms a left comb.\footnote{Note that when $\ell={a_{k-1}}$, the vertex $x_{\ell-1}$ is not part of our left comb. In this case, the same proof works, but with $x_{\ell-1}$ replaced by $x_{a_{k-2}}$.} Equivalently, for $  u' = a(x_0, x_\ell)$, 
        \begin{align}
        \label{eqn: CaseII.2Steqn}
          \{x_0, x_{a_{k-2}}, x_{\l-1}\}\subseteq L(u') \text{ while } x_\l \in R(u').
        \end{align}
        For $k\ge 5$, let $J=\{2,\dots, k-3\}\cup\{ a_{k-2}, \l, \l+1\}$ be a $(k-1)$-element set, and for $k = 4$, we let $J= \{2, \l, \l+1\}$. Note that in view of Definition \ref{def: separated}, in this case $a_{k-2} = 2$. 
        
        We will prove the induction step by showing that if $Z\cup\{ x_{\l+1}\}$ does not form a left comb, then the edge $X^J = \{x_J\}\cup \{x_j: j\in J\}$ must form a balanced split, which in view of (\ref{eqn: coloring4}) contradicts the assumption that it has color 1.

        Indeed, assume for the sake of contradiction that $Z\cup \{x_{\l+1}\}$ does not form a left comb, i.e., $\delta(x_{\l-1},x_\l) <\delta(x_{\l}, x_{\l+1}).$
        Note that since $Z$ forms a left comb, we have that  $\delta(x_0, x_\ell) = \delta(x_{\ell-1}, x_\ell)$, and so we have that $\delta(x_{0}, x_\ell) < \delta(x_{\ell}, x_{\ell + 1})$.  This is equivalent to $\{x_{0}, x_\ell, x_{\l+1}\}$ forming a right comb. As a consequence, $\{x_\ell,x_{\ell+1}\}\subseteq R(u')$.  Furthermore, in view of (\ref{eqn: CaseII.2Steqn}) and since $x_0 \le x_J \le x_{a_{k-2}}$,  we must have $x_J\in L(u')$ as well. Consequently, 
        $$\{x_J,x_{a_{k-2}}\}\subseteq L(u')\text{ and } \{x_\l, x_{\l+1}\}\subseteq R(u'),$$
        implying that $X^J$ forms a balanced split, which in view of (\ref{eqn: coloring4}) contradicts $\chi_c(X^J)=1$. 
    \end{proof}
    Having shown that $Y = \{x_0, x_{a_{k-2}},x_{a_{k-1}}, \ldots, x_n \}$ forms a left comb, we will use Proposition \ref{prop: leftcomb} to arrive at a contradiction to the assumptions \ref{it1: inductionstepi} and \ref{it2: inductionstepii} of Proposition \ref{prop: inductionstep}. To this end, let 
    \begin{align}
        \label{eqn: caseII.12l,t}
        \ell = |Y|-1 = n-a_{k-1}+2\ge n/2k \ge t+2. 
    \end{align}
    \begin{claim}
            The coloring $c:[N]^{(k-1)}\to \Z_4$  contains a monochromatic copy of $K_{\ell-1}^{(k-1)}.$
    \end{claim}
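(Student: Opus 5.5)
The plan is to apply Proposition \ref{prop: leftcomb} to the left comb $Y$ from Claim \ref{claim: CaseII.12St}, in the same way as for the left comb in Claim \ref{claim: CaseI.2Proj}. Write
$$y_0 = x_0,\quad y_1 = x_{a_{k-2}},\quad y_2 = x_{a_{k-1}},\quad y_3 = x_{a_{k-1}+1},\ \dots,\ y_\ell = x_n,$$
so that $Y=\{y_0<y_1<\cdots<y_\ell\}$ with $\ell$ as in (\ref{eqn: caseII.12l,t}). Let $f:\{0,\dots,\ell\}\to\{0,a_{k-2},a_{k-1},\dots,n\}$ be the increasing bijection with $y_i=x_{f(i)}$. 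Since $\ell\ge t+2\ge k$ for $n$ large, the size hypothesis $N\ge\ell\ge k$ of Proposition \ref{prop: leftcomb} holds.

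Next I verify conditions \ref{it: leftlema}--\ref{it: leftlemmac}. Condition \ref{it: leftlema} is exactly Claim \ref{claim: CaseII.12St}. For \ref{it: leftlemb}, take $J\in\{2,\dots,\ell\}^{(k-1)}$. Then $f(J)\subseteq\{a_{k-1},\dots,n\}\subseteq\{2,\dots,n\}$, and $f(J)$ has $k-1$ elements, so $x_{f(J)}$ is a vertex of $F$. Set $y_J:=x_{f(J)}$. By (\ref{eqn: VFord}) we get $y_0=x_0\le x_{f(J)}\le x_1$. Since $a_{k-2}\ge 2$, also $x_1<x_{a_{k-2}}=y_1$, hence $y_0\le y_J\le y_1$. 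For \ref{it: leftlemmac}, each set $Y^J=\{y_J\}\cup\{y_j:j\in J\}$ equals the edge $X^{f(J)}$ of the monochromatic copy of $(F,<)$, so $\chi_c(Y^J)=1$ for all such $J$.

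Proposition \ref{prop: leftcomb} then says that $\pi(Y\setminus\{y_0\})^{(k-1)}$ is monochromatic under $c$. Because $Y\setminus\{y_0\}$ is a left comb with $\ell$ elements, its projection $\pi(Y\setminus\{y_0\})=\{\delta(y_{i-1},y_i):2\le i\le \ell\}$ consists of $\ell-1$ distinct levels. This gives a monochromatic $K_{\ell-1}^{(k-1)}$, which is the claim.

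The only step that needs care is \ref{it: leftlemb}. The vertices $x_J$ are only known to lie in $[x_0,x_1]$, not near $y_1=x_{a_{k-2}}$. The point is that the upper bound $x_1$ is still at most $y_1$, and that $J$ avoids the index $1$, so $f(J)$ really indexes a non-special edge of $F$. Once that is checked, the rest is bookkeeping identical to Case I.3.
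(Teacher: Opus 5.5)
Your proposal is correct and matches the paper's proof. The paper uses the same relabeling $y_0=x_0$, $y_1=x_{a_{k-2}}$, $y_2=x_{a_{k-1}},\dots,y_\ell=x_n$ and applies Proposition \ref{prop: leftcomb} to get a monochromatic clique on the $\ell-1$ levels of $\pi(Y\setminus\{y_0\})$; your explicit check of condition \ref{it: leftlemb} (via $y_J:=x_{f(J)}$ with $f(J)\subseteq\{a_{k-1},\dots,n\}$ and $x_1<x_{a_{k-2}}$) supplies detail the paper leaves implicit.
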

    \begin{proof}
       Let 
        $$y_0 = x_0,\; y_1 = x_{a_{k-2}},\; y_2 = x_{a_{k-1}},\; y_3 = x_{{a_{k-1}+1}},\; \ldots ,\; y_{\l-1} = x_{n-1},\; y_\l = x_{n}.$$
        We apply Proposition \ref{prop: leftcomb} with $Y = \{y_0,y_1,\dots,y_\ell\}$ and $c$, $k$, $N$ as in the statement of Proposition \ref{prop: inductionstep}. To this end, we verify the assumptions. 
        \begin{itemize}
            \item Condition \ref{it: leftlema} holds since in view of Claim \ref{claim: CaseII.12St} the set $Y$ forms a left comb.
            \item Conditions \ref{it: leftlemb} and \ref{it: leftlemmac} hold since the set $Y$ is a subset of the distinguished set of vertices $\{x_0,x_1, \dots, x_n\}$ of $(F_{\phi},<)$ and  $\chi_c$ takes the value 1 over all edges $X^J\in F$.
        \end{itemize}
       Consequently, $c$ is constant over  $\pi(Y\setminus \{y_0\})^{(k-1)}$ and thus contains a monochromatic clique of size $|Y|-2 = \l-1\ge t+1$.
    \end{proof}
    In view of the above claim, $c$ contains a monochromatic copy of $K_{t+1}^{(k-1)}$. Such a clique contains some member of  $\cF_{I'}^{(k-1)}(t)$ and some member of $\rev \cF_{I'}^{(k-1)}(t)$. Consequently, regardless of which color the clique has, we contradict either assumption \ref{it1: inductionstepi} or \ref{it2: inductionstepii} of Proposition \ref{prop: inductionstep}. 
    \\[0.3cm]
    \noindent\underline{\textbf{Case II.3: $X^I$ forms a right comb:}}
   As mentioned in Remark \ref{rem: notofcolor2}, in the analysis of  Case I.2 where $X^I$ formed a right comb, we only used the fact that the monochromatic copy of $(F,<)$ is not in color 2. Since here we assume that $(F, <)$ is in color 1, the same analysis applies  here. Let
 $$t' = a_{k-1}-1 \text{ and } I'' = I\setminus \max I = \{a_1< a_2 < \dots< a_{k-2}\}\in [t']^{(k-2)}.$$ 
In the same way that we prove Claim \ref{claim: CaseI.3Proj} in Case I.2, it can be shown that
    \begin{claim}
          $c^{-1}(1)\subseteq [N]^{(k-1)}$ contains a monochromatic copy of some $(F'',<)\in \cF_{I''}^{(k-1)}(t')$.
    \end{claim}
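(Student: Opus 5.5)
The claim is the Case II.3 analogue of Claim \ref{claim: CaseI.3Proj}, and I would prove it by repeating the Case I.2 argument verbatim, with color $0$ replaced by color $1$. As Remark \ref{rem: notofcolor2} notes, the only property of the color used there is that no edge of $(F,<)$ receives color $2$, so no edge forms a $(1,k-1)$-split. Since all edges here have color $1$, this holds.

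\emph{Step 1: a long right comb.} I would first reproduce Claim \ref{claim: CaseI.3St}: the set $Y=\{x_0,x_1,x_2,\ldots,x_{a_{k-1}}\}$ forms a right comb. Set $y_0=x_0$, $y_1=x_1$, $y_i=x_i$ for $2\le i\le \ell:=a_{k-1}$, and apply Fact \ref{fact: bigfact}.
\begin{itemize}
\item Condition (a) holds because $\{x_1,x_2,x_{a_{k-1}}\}$ is a subset of the right comb $X^I$.
\item Condition (b) holds because $x_0\le x_J\le x_1$ by (\ref{eqn: VFord}), and each $X^J$ has color $1\neq 2$.
\item The size requirement $\ell\ge k$ follows from separatedness of $I$, namely $a_{k-1}\ge n/2k>k$.
\end{itemize}
Fact \ref{fact: bigfact} then shows that $Y\setminus\{x_1\}$ is a right comb. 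Since $\{x_0,x_1,x_2,x_{a_{k-1}}\}\subseteq X^I$ is also a right comb, the inequality $\delta(x_0,x_1)<\delta(x_1,x_2)$ reattaches $x_1$, so $Y$ itself is a right comb.

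\emph{Step 2: projection.} Next I would apply Proposition \ref{prop: rightprojection} to $Y$ with $I_{\ref{prop: rightprojection}}=I$ and $y_J=x_J$ for $J\in\{2,\ldots,\ell\}^{(k-1)}\cup\{I\}$. The hypotheses are checked as follows.
\begin{itemize}
\item Condition (a) is Step 1.
\item Condition (b) is (\ref{eqn: VFord}).
\item Condition (c) requires $a(y_I,y_1)=a(x_0,x_1)=u_Y=a(x_0,x_{a_{k-1}})$. This holds since $\{x_0,x_1,x_{a_{k-1}}\}\subseteq X^I$ is a right comb, so $\delta(x_0,x_1)=\delta(x_0,x_{a_{k-1}})$.
\item Condition $(\star)$ holds with $\alpha=1$, because every edge $X^J$ of the copy of $(F,<)$ has $\chi_c(X^J)=1$.
\end{itemize}
Note also that $1\in I$ and $k\ge4$, as Proposition \ref{prop: rightprojection} requires. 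The proposition then yields a copy in $c^{-1}(1)$ of a member of $\cF^{(k-1)}_{I\setminus\{\max I\}}(\ell-1)=\cF^{(k-1)}_{I''}(t')$, which is the claim.

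I expect no real obstacle. The only point needing care is confirming that Fact \ref{fact: bigfact} genuinely uses only the absence of $(1,k-1)$-splits and never the specific color $0$. I also need to check that Condition (c) is obtained purely from the right-comb shape of $X^I$, not from Case I data. Both hold, since the Case I.2 argument used nothing else.
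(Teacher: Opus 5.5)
Your proposal is correct and follows the paper's route: the paper proves this claim by noting that the Case I.2 argument (Claims \ref{claim: CaseI.3St} and \ref{claim: CaseI.3Proj}) carries over verbatim, since it only uses that no edge has color $2$. One small precision point: the inequality $\delta(x_0,x_1)<\delta(x_1,x_2)$ alone does not reattach $x_1$. You also need $\delta(x_1,x_2)<\delta(x_2,x_3)$, which follows from $\{x_1,x_2,x_{a_{k-1}}\}$ being a right comb together with $\delta(x_2,x_{a_{k-1}})=\delta(x_2,x_3)$ in the right comb $Y\setminus\{x_1\}$.
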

Denote the distinguished set of vertices of $V(F'')$  by $\{\delta_0, \delta_1,\dots ,\delta_{t'}\}$.  Recall that in (\ref{eqn: tI'}) we are given an arbitrary separated set $I'\in [t]^{(k-2)}$. Similarly as in Case I.2, for given $I'$, by deleting appropriate vertices, we obtain from $(F'', <)$ a member $(F', <)$ of $\cF_{I'}^{(k-1)}(t)$ as an ordered subgraph.

In particular, if $I' = \{b_1=1< b_2 = 2< b_3 <\dots < b_{k-2}\}$, 
$V(F')$ will have distinguished vertex set $Z\subseteq \{\delta_0,\delta_1,\dots, \delta_{t'}\}$ where $|Z| =t$ and is such that:
 $$\{\delta_0\} \cup\{\delta_{a_i}: 1\le i\le k-2\}\subseteq Z,$$
 where $\delta_{a_i}$ is at $(b_i+1)^{\text{st}}$ position in $Z$.
This  contradicts \ref{it1: inductionstepi} of Proposition \ref{prop: inductionstep}. This completes Case II.3.
\\[0.3cm]
To summarize, we have  shown \ref{it1: inductionstepa} of Proposition \ref{prop: inductionstep}, i.e., we have verified that $\chi_c$ does not contain a monochromatic copy of some $(F, <)\in \cF_{I}^{(k)}(n)$ in color 0 or 1. It remains to show \ref{it1: inductionstepb}, i.e., that $\chi_c$ does not contain a monochromatic copy of some member of $ \rev \cF_{I}^{(k)}(n)$ in color 2 or 3.

The proof of \ref{it1: inductionstepb} can be deduced from \ref{it1: inductionstepa}. Indeed, define the ``reversing map'' $\theta:[2^N]\to[2^N]$, given by $\theta(i)=2^N+1-i$. Note that $\theta$ preserves projections, that is, $\pi(X)=\pi(\theta (X))$ for any $X\subset[2^N]$. Note that in view of (\ref{eqn: coloring4}), if $\chi_c(X) = 2$, then there are three possibilities, each implying  $\chi_c(\theta(X)) = 1$. 
\begin{itemize}
    \item $X$ forms a $(1,k-1)$-split. In this case $\theta(X)$ forms a $(k-1,1)$-split and in view of (\ref{eqn: coloring4}), we have   $\chi_c(\theta(X)) = 1$. 
    \item $X$ forms a left comb and $3-c(\pi(X)) = 2$. In this case $\theta(X)$ forms a right comb, and since $\pi(X) = \pi(\theta( X))$ and in view of (\ref{eqn: coloring4}),
    $$\chi_c(\theta(X)) = c(\pi(\theta( X)) = c(\pi(X))=1.$$
    \item $X$ forms a right comb with $c(\pi(X)) = 2$. In this case $\theta(X)$ forms a left comb, and since $\pi(X) = \pi(\theta (X))$ and in view of (\ref{eqn: coloring4}),
    $$\chi_c(\theta(X)) = 3-c(\pi(\theta (X))) = 3- c(\pi(X)) = 1.$$
\end{itemize}
Similarly, it can be checked that if $\chi_c(X) = 3$, then $\chi_c(\theta(X)) = 0$. As a consequence, we obtain that if $\chi_c:[2^N]^{(k)}\to \ZZ_4$ contains a monochromatic copy of some $(F_{\phi},<_{\rev})\in \rev \cF_{I}^{(k)}(n)$ of color 2 or 3, then $\theta(V(F))$ forms a copy of $\cF_{I}^{(k)}(n)$ in color 0 or 1. This however contradicts \ref{it1: inductionstepa} of Proposition \ref{prop: inductionstep}. Consequently, \ref{it1: inductionstepb} of Proposition \ref{prop: inductionstep} also holds and $\chi_c$ does not contain any member of $\rev \cF_{I}^{(k)}(n)$ in colors 2 or 3. 
\end{proof}
We now complete the proof of Theorem \ref{thm: steppingup}.
\begin{proof}[Proof of Theorem \ref{thm: steppingup}]
    We will first prove the case $k = 3$. Due to \cite{Erd47}, there exists $n_0(3)$ such that for all $n\ge n_0(3)$, we have 
    $$[2^{n/2}]\narrow (K_{n-1})_2^2.$$
   Let $b_3 = 1/2$, and
   $$N = 2^{n^{b_3}} \le 2^{n/2}, \text{ and  } 2^N = t_2(n^{b_3}).$$
   Let $I = \{1,2\}$ and $c:[N]^{(2)}\to \{0,1\}$ be a coloring that doesn't contain a monochromatic copy of $K_{n-1}^{(2)}$. Let $(F, <)$ be any ordered $3$-graph that contains subgraphs $(F_1, <)\in \cF_{I}^{(3)}(n)$ and  $(F_2, <_{\rev})\in \rev\cF_{I}^{(3)}(n)$. In view of Proposition \ref{prop: basecase}, the coloring $\chi_c: [2^N]^{(3)}\to \Z_4$ does not contain any monochromatic copy of $(F_1,<)\in \cF_{I}^{(3)}(n)$ in colors $0,1$ and does not contain any monochromatic copy of $(F_2,<_{\rev})\in \rev \cF_{I}^{(3)}(n)$ in colors $2,3$.  In particular, it implies that  
   $$[2^N] = [t_2(n^{b_3})]\narrow (F,<)^{3}_4.$$
   Now we prove the lower bound in Theorem \ref{thm: steppingup} for $k\ge4$ by induction. Let our induction hypothesis for $k\ge 4$ be the following stronger assumption:
   \begin{enumerate}[label=($\star$)]
       \item There exist integers $n_0(k-1)$ and $b_{k-1}>0$ such that for all $t\ge n_0(k-1)$ and separated set $I'\in [t]^{(k-2)}$ the following holds. For $N = t_{k-2}(t^{b_{k-1}})$, there exists a coloring $c:[N]^{(k-1)}\to \Z_4$ that does not contain any member of $\cF_{I'}^{(k-1)}(t)$ in colors $0,1$ and any member of $\rev \cF_{I'}^{(k-1)}(t)$ in colors $2,3$. 
   \end{enumerate}
   We now fix the following parameters. Let $n_0(k) = \max\{8k\cdot n_0({k-1}), n_k\}$ where $n_k$ is obtained from Proposition \ref{prop: inductionstep}. Let $n \ge n_0(k)$ and  $I\in [n]^{(k-1)}$ be a separated set. For $t = n/8k \ge n_0(k-1)$, let $I'\in [t]^{(k-2)}$ be a separated set.  We now apply Proposition \ref{prop: inductionstep} with $n$, $k$, $t$, $I$ and $I'$ and with $c:[N]^{(k-1)}\to \ZZ_4$ as in the induction hypothesis. 
   
  In view of the induction hypothesis, $c:[N]^{(k-1)}\to \ZZ_4$  satisfies \ref{it1: inductionstepi} and \ref{it2: inductionstepii} of Proposition \ref{prop: inductionstep}, and consequently we obtain that $\chi_c: [2^N]^{(k)}\to \Z_4$ does not contain any member of  $\cF^{(k)}_{I}(n)$ in colors $0,1$, and any member of $\rev \cF_{I}^{(k)}(n)$ in colors $2,3$. Note that for $b_k>0$ chosen to be sufficiently smaller than $b_{k-1}$, we have
   $$2^N = t_{k-1}\left(\left(\frac{n}{8k}\right)^{b_{k-1}}\right)\geq t_{k-1}(n^{b_k}).$$
   Consequently, $\chi_c:\left[t_{k-1}(n^{b_k})\right]^{(k)}\to \ZZ_4$ does not contain any member of  $\cF_{I}^{(k)}(n)$ in colors 0 or 1 and any member of $\rev \cF_I^{(k)}(n)$ in colors 2 or 3. 
\end{proof}
\section{Random orderings}\label{sec:orderings}

Given a family of ordered $k$-graphs $\cF$ and an unordered $k$-graph $H$ with the property that $F\subseteq H$ for every $(F,<)\in \cF$, we denote by
\begin{align*}
    H \ordarrow \cF,
\end{align*}
the fact that any total ordering of $V(H)$ contains a member of $\cF$ as a subgraph. Note that for any such family $\cF$, one can always construct an $H$ such that $H\ordarrow \cF$ by taking $H$ to be the complete graph on $\max_{F\in\cF} v(F)$ vertices. Our goal in this section is to construct, for an appropriate family of ordered $(k,k-1)$-systems $\cF$, a $(k,k-1)$-system $H$ of polynomial size in $\max_{F\in \cF} v(F)$ with the above property. We begin by describing our family. Similarly, as in Definition \ref{def:F_I}, we fix a set $I\in [n]^{(k-1)}$ with $1\in I$ and consider the family $\Omega_I:=\{2,\ldots,n\}^{(k-1)}\cup\{I\}$. Let $X=\{x_1,\ldots,x_n\}$ and $\{x_J: J\in \Omega_{I}\}$ be set of distinct points.

\begin{definition}\label{def:G_I}
Let $\cG^{(k)}_I(n)$ to be the family of all ordered $k$-graphs $(G,<)$ with vertex set
    \begin{align*}
        V(G)=\{x_J: J\in \Omega_I\}\cup \{x_i: 1\leq i \leq n\},
    \end{align*}
    edge set
    \begin{align*}
        G=\left\{\{x_J\}\cup \{x_j:\: j\in J\}:\: J\in \Omega_I\right\},
    \end{align*}
    and vertex ordering satisfying
    \begin{align*}
        x_I<x_J<x_1<\ldots<x_n,
    \end{align*}
    for $J\in \Omega_I$. Moreover, we define $\rev\cG^{(k)}_I(n):=\{(G,<_{\rev}):\: (G,<)\in \cG^{(k)}_{I}(n)\}$ as the family obtained by reversing the ordering of the vertices in $\cG_{I}^{(k)}(n)$.
\end{definition}

We remark that Definition \ref{def:G_I} is a special case of Definition \ref{def:F_I}.  Similarly, we have $\rev \cG_I^{(k)}(n)\subseteq \rev \cF_I^{(k)}(n)$. Furthermore, note that every member $(G,<) \in \cG_I^{(k)}(n)$ is a $(k,k-1)$-system with $v(G)=n+\binom{n-1}{k-1}+1$ vertices and $e(G)=\binom{n-1}{k-1}+1$ edges.

The main result of this section can now be stated as follows.

\begin{theorem}\label{thm:winkler}
    Given $k\geq 3$, there exists $n_1:=n_1(k)$ and constant $C:=C(k)$ such that for $n\geq n_1$ the following holds. For a set $I\subseteq [n]^{(k-1)}$ with $1\in I$, there exists a $(k,k-1)$-system $H$ on $n^{C}$ vertices such that
    \begin{align*}
        H \ordarrow \cG_I^{(k)}(n) \quad \text{and} \quad H \ordarrow \rev\cG_I^{(k)}(n).
    \end{align*}
\end{theorem}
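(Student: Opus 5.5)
Since reversing an ordering of $V(H)$ exchanges $\cG_I^{(k)}(n)$ and $\rev\cG_I^{(k)}(n)$, it suffices to build $H$ with $H\ordarrow \cG_I^{(k)}(n)$ that is also \emph{reverse-symmetric}: for every ordering, the reversed ordering contains a copy as well. The cleanest way is to make $H$ force a copy of $\cG_I^{(k)}(n)$ under every ordering; applied to the reversed ordering, this then gives $H\ordarrow \rev\cG_I^{(k)}(n)$ automatically. So the task is a single statement: there is a $(k,k-1)$-system $H$ on $n^{C}$ vertices such that every total order of $V(H)$ contains a set $x_1<\dots<x_n$, together with vertices $x_J$ ($J\in\Omega_I$) with $x_I<x_J<x_1$, such that every $\{x_J\}\cup\{x_j:j\in J\}$ is an edge.

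I would take $H$ random, in the spirit of \cite{RW89}. Let $V=A\dcup B$ with $|A|=m=n^{C_1}$ and $|B|=M=n^{C_2}$. For each $(k-1)$-set $S\subseteq A$, choose independently a set of $d$ "apex" vertices $P_S\subseteq B$. Put the edges $S\cup\{p\}$ with $p\in P_S$, and keep only those edges for which no $(k-1)$-subset meeting $B$ is covered twice. Pairs of $(k-1)$-sets of the form $S'\cup\{p\}$ collide only with probability about $d^2/M$. Hence after deleting a small number of bad vertices, or after choosing $P_S$ via a linear hypergraph design on $B$, the result is a $(k,k-1)$-system: $(k-1)$-sets inside $A$ lie in edges only through their own $P_S$, and the sets $P_S$ can be arranged so that each $p\in B$ together with each $(k-2)$-subset of $A$ lies in at most one edge.

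Now fix an ordering of $V$. I would find $x_1<\dots<x_n$ inside $A$ together with a threshold $\tau<x_1$ such that every $(k-1)$-set $S$ among the relevant index sets has an apex $p\in P_S$ with $p<x_1$. In addition, the special apex for $S_I=\{x_i:i\in I\}$ must be below all the other chosen apices. The sets $x_J$ need not be distinct as vertices in $\cG$. Actually the definition lists distinct points, so I also need distinct apices, which holds automatically because the $P_S$ are disjoint after cleaning. The plan is first to take $x_1<\dots<x_n$ to be the top $n$-ish elements of $A$ in a suitable window, so that most of $B$ lies below $x_1$. The difficulty is that the ordering is adversarial and could place all of $B$ above $A$. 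To handle this, I interleave: split the order into the lowest half and the upper half, and use a counting/union bound over the $\binom{m}{n}\cdot(\text{orderings up to } n^{O(n)}\text{ relevant patterns})$ choices. The union bound is not over all $(m+M)!$ orderings but over the $e^{O(n^{k}\log n)}$ "types" determined by which $n$-set $X$ is chosen and by a $(k-1)$-set partition of its position.

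The main obstacle is controlling this adversarial ordering with only polynomial size while keeping the $(k,k-1)$-property. What I would try first is the following. By pigeonhole, in any ordering either at least half of $B$ lies below the median of $A$ or at least half lies above it. In the first case take $X$ among the upper half of $A$. In the second case apply the reverse symmetry together with the fact that we also need $\rev\cG$, which is why proving both relations simultaneously is natural. Given $X$ from the upper half of $A$ and a lower half $B^-$ of size at least $M/2$, each $P_S$ meets $B^-$ with probability at least $1-2^{-d}$ under a random choice, uniformly over the fixed $B^-$. The failure probability for a fixed pair $(X,B^-)$ is then at most $n^{k}2^{-d}$, but $B^-$ ranges over exponentially many sets. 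To avoid this I would fix $B^-$ via the ordering of $B$ alone, namely as a median cut, reducing to $M!$ possibilities. This is still too many, so instead I would require a Janson/Chernoff-type statement that, with high probability, every $(k-1)$-set $S$ has apices spread across all of $B$. Concretely, partition $B$ into $n^{2k}$ blocks and put one apex of $P_S$ in each block; then any half of $B$ contains some whole block, up to the bookkeeping this requires. Making this deterministic spreading compatible with the partial Steiner condition, and arranging that the apex $x_I$ is the minimum, is the step I expect to need the most care. For the minimality of $x_I$, I would pick $x_I$ as the lowest apex among all of $P_{S_I}$, and choose the remaining $x_J$ to be an apex in $B^-$ that is higher than it, using the many blocks available.
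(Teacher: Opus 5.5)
Your opening reduction is correct: if every ordering of $V(H)$ contains a member of $\cG_I^{(k)}(n)$, then applying this to $<_{\rev}$ gives $H\ordarrow\rev\cG_I^{(k)}(n)$. The gap is that your bipartite construction cannot satisfy $H\ordarrow\cG_I^{(k)}(n)$, and your pigeonhole step contradicts that reduction.

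Every edge of your $H$ has exactly one vertex in $B$. Order $V(H)$ so that all of $A$ lies below all of $B$. Then the $B$-vertex of each edge is its largest vertex. In a copy of a member of $\cG_I^{(k)}(n)$, the largest vertex of $\{x_J\}\cup\{x_j:j\in J\}$ with $J\subseteq\{2,\dots,n\}$ is $x_{\max J}$. Taking $J$ with $\max J=n-1$ forces $x_{n-1}\in B$. Taking $J\supseteq\{n-1,n\}$ forces $x_{n-1}\in A$. So this ordering contains no member of $\cG_I^{(k)}(n)$ at all.

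Your ``second case'' does not rescue this. Applying the first case to $<_{\rev}$ gives a copy of $\cG$ in $<_{\rev}$, which is a copy of $\rev\cG$ in $<$. So the argument only shows that each ordering contains a member of $\cG$ \emph{or} of $\rev\cG$, whereas both are required.

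The later steps are also unsound as stated. The union bound over ``types'' is not justified, because whether a copy exists depends on the full relative order of base vertices and apices, not just on $X$. The claim that any half of $B$ contains a whole block is false: an adversarial half can take half of every block.

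The missing idea is that no vertex may have a predetermined role. The paper first builds a $(k,k-1)$-system blow-up $R$ of $G$ on $N=\Theta(n^{k^2+4k-1})$ vertices. It has classes $V_i$ of size $m=n^{k+3}$, and each $V_J$ is identified with $\prod_{j\in J}V_j$. Using hypergeometric Chernoff bounds, it shows that a uniformly random ordering of $V(R)$ fails to contain a member of $\cG_I^{(k)}(n)$ with probability only $\exp(-\Omega(n^2))$. It does this by locating a copy whose vertices lie in prescribed intervals $X_I<X_\star<X_1<\dots<X_n$.

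It then places independent uniformly random copies of $R$ on the lines of a projective plane of prime order $p\ge N$; this is the R\"odl--Winkler trick. Two lines share at most one point and $k-1\ge 2$, so the union is still a $(k,k-1)$-system. Any fixed ordering of $V(H)$ induces on each line an independent uniformly random ordering of $R$. Hence a fixed ordering fails with probability $\exp(-\Omega(n^2 v(H)))$, which beats the $v(H)!=\exp(O(v(H)\log n))$ possible orderings. This is what defeats the adversarial ordering that your construction cannot handle.
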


The $(k,k-1)$-system $H$ will be constructed at random using an approach similar to \cite{RW89}. Before we proceed with the proof of Theorem \ref{thm:winkler}, we describe a subconstruction that will be useful to us.

\subsection{An auxiliary construction}\label{subsec:construction}

Throughout this subsection we make extensive use of the well-known Chernoff bounds for the lower and upper tails of a hypergeometric random variable (see \cite{JLR00}). We say that $X \sim \Hyp(N,K,n)$ if $X$ is distributed as the number of successes in $n$ draws without replacement from a population of size $N$ containing exactly $K$ desired elements.

\begin{theorem}\label{thm:chernoff}
Let $X\sim \Hyp(N,K,n)$ and $\mu=\EE(X)=nK/N$. Then, for $\delta\in (0,1)$, we have
\begin{align*}
    \PP(X>\mu+s)&\leq \exp\left(-\frac{s^2}{2\mu+s/3}\right), \quad s\geq 0 \\
    \PP(X<\mu-s)&\leq \exp\left(-\frac{s^2}{2\mu}\right), \quad 0\leq s \leq \mu.
\end{align*}
\end{theorem}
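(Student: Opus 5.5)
The plan is to reduce to the binomial case and then run the usual exponential-moment (Chernoff) argument. First I would invoke Hoeffding's comparison: if $X\sim\Hyp(N,K,n)$ and $B\sim\mathrm{Bin}(n,K/N)$, then $\EE f(X)\le \EE f(B)$ for every convex $f$. In particular $\EE e^{\lambda X}\le \EE e^{\lambda B}=(1-p+pe^{\lambda})^n\le \exp\bigl(\mu(e^{\lambda}-1)\bigr)$ for all real $\lambda$, where $p=K/N$ and $\mu=np$. This is the only place the hypergeometric structure enters.

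Next, Markov's inequality applied to $e^{\lambda X}$, optimized over $\lambda$, gives the two tail bounds
\begin{align*}
\PP(X\ge \mu+s)&\le \exp\bigl(-\mu\,\varphi(s/\mu)\bigr),\\
\PP(X\le \mu-s)&\le \exp\bigl(-\mu\,\varphi(-s/\mu)\bigr),
\end{align*}
with $\varphi(x)=(1+x)\log(1+x)-x$. The optimal choice is $\lambda=\log(1+x)$ for the upper tail and $\lambda=\log(1-x)$ for the lower tail, where $x=s/\mu$. For the lower tail, $\varphi(-x)\ge x^2/2$ on $[0,1]$, because $\varphi(0)=\varphi'(0)=0$ and $\varphi''(-x)=1/(1-x)\ge 1$. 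This yields the stated bound $\exp(-s^2/(2\mu))$ immediately.

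The main obstacle is the upper tail with the constant $s/3$ exactly as written. Converting $\mu\varphi(x)$ into the stated expression requires
\[
\varphi(x)\ge \frac{x^2}{2+x/3} \qquad \text{for all } x\ge 0 .
\]
Comparing Taylor expansions at $0$ shows this fails for small $x$: $\varphi(x)=x^2/2-x^3/6+\cdots$, whereas $x^2/(2+x/3)=x^2/2-x^3/12+\cdots$. Since the binomial bound $\exp(-\mu\varphi(x))$ is asymptotically sharp in the Poisson-like regime, the route above cannot produce the $s/3$ form, and I expect that form to be false for small $s/\mu$.

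What this route does prove is the standard inequality $\varphi(x)\ge x^2/\bigl(2(1+x/3)\bigr)$. To check it, set $g(x)=\varphi(x)-x^2/(2+2x/3)$, note $g(0)=g'(0)=0$, and compare second derivatives. This gives
\[
\PP(X>\mu+s)\le \exp\Bigl(-\frac{s^2}{2\mu+2s/3}\Bigr),
\]
which is the bound in \cite{JLR00}. I would therefore prove and use this $2s/3$ version as the upper tail of Theorem \ref{thm:chernoff}, replacing the $s/3$ in the statement. I expect the later applications in Section \ref{sec:orderings} to need the tail only up to constant factors in the exponent, so this change should not affect them, though I have not verified this.
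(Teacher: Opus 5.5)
The paper gives no proof here and simply cites \cite{JLR00}. Your argument is the standard proof behind that citation: exponential moments for the binomial, transferred to $\Hyp(N,K,n)$ by Hoeffding's comparison of moment generating functions, together with correct verifications of $\varphi(-x)\ge x^2/2$ and $\varphi(x)\ge x^2/(2+2x/3)$.

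Your diagnosis of the statement is also right. The printed denominator $2\mu+s/3$ is a typo for $2(\mu+s/3)$, and the printed form is genuinely false, not only for small $s/\mu$. For example, take $N\gg n$, $K/N\to 0$ and $\mu=100$. Then $\PP(X\ge 200)\approx e^{-41.5}$, while the printed bound with $s\uparrow 100$ is about $e^{-42.9}$.

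Every upper-tail use in Section~\ref{sec:orderings} has $\mu=O(s)$ and needs only $\exp(-\Omega(s))$. So your $2s/3$ version suffices, and nothing downstream changes.
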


Let $k,n \geq 3$ and $I\subseteq [n]^{(k-1)}$ with $1\in I$ be given. Define 
\begin{align}\label{eq:m}
    m:=n^{k+3}, \quad  N:=mn+m^k\left(\binom{n-1}{k-1}+1\right)=\Theta(n^{k^2+4k-1}),
\end{align}
and construct an unordered $k$-graph $R$ on $N$ vertices as follows. The vertex set of $R$ is partitioned into sets 
\begin{align*}
    V(R)=\bigcup_{J\in \Omega_I} V_J\cup \bigcup_{i=1}^n V_i,
\end{align*}
where $|V_J|=m^{k-1}$ for $J\in \Omega_I$ and $|V_i|=m$ for $1\leq i \leq n$. For each $J\in \Omega_I$, we identify the set $V_J$ with $\prod_{j\in J} V_j$ by labeling
\begin{align*}
V_J=\left\{v_{z}:\: z=(v_j)_{j\in J}\in\prod_{j\in J} V_j \right\}.
\end{align*}
The edge set of $R$ consists of $k$-tuples of the form
\begin{align*}
R=\left\{\{v_j\}_{j\in J}\cup \{v_z\} :\: z=(v_j)_{j\in J}\in \prod_{j\in J}V_j\right\}.
\end{align*}
Note that $R$ has $e(R)=m^k\left(\binom{n-1}{k-1}+1\right)$ edges. We also remark that $R$ is a $(k,k-1)$-system with the property
\begin{enumerate}[label=($\star$)]
\item\label{eq:star} For $J\in \Omega_I$ and every transversal $z=(v_j)_{j\in J} \in\prod_{j\in J}$, there exists a unique vertex $v_z \in V_J$ extending $\{v_j\}_{j\in J}$ to an edge.
\end{enumerate}

One can think of $R$ as a ``$(k,k-1)$-system blow-up'' of the $k$-graph $G$ (Definition \ref{def:G_I}) in the sense that it contains many unordered copies of $G$, while $R$ itself remains a $(k,k-1)$-system. Hence, one might expect that a random ordering of the vertices of $R$ contains, with high probability, an ordered copy of an element of $\cG_I^{(k)}(n)$. This is confirmed in the next lemma. 

Note that any ordering of the vertices of $V(R)$ can be viewed as a bijection $\psi:V(R)\rightarrow [N]$ by the correspondence sending the bijection $\psi$ to an ordering $<_{\psi}$ such that $x<_\psi y$ if and only if $\psi(x)<\psi(y)$. We say that a bijection $\psi$ is \emph{good} if $(R, <_{\psi})$ contains a copy of $(G,<)$ for every $(G,<)\in \cG_I^{(k)}(n)$. Otherwise, we say that $\psi$ is \emph{bad}. 

\begin{lemma}\label{lem:random}
    Let $\psi:V(R)\rightarrow [N]$ be an bijection chosen uniformly at random. Then
    \begin{align*}
    \PP(\psi \text{ is bad})= \exp(-\Omega(n^2)).
    \end{align*}
\end{lemma}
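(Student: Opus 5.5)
The plan is to replace ``contains every member of $\cG_I^{(k)}(n)$'' by a single structural event $\mathcal{E}$ about the random bijection $\psi$, chosen so that $\mathcal{E}$ implies goodness. We then bound $\PP(\overline{\mathcal{E}})$ with Theorem \ref{thm:chernoff} and a union bound over polynomially many (in $n$) sub-events. A union bound over the members of $\cG_I^{(k)}(n)$ themselves is hopeless: there are about $\bigl(\binom{n-1}{k-1}\bigr)!$ of them, which is far more than $\exp(n^2)$.

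\textbf{Setup.} Write $M=\binom{n-1}{k-1}$ and split $[N]$ into a lower half $A=[1,N/2]$ and an upper half $B=(N/2,N]$. Split $B$ further into consecutive intervals $B_1<\cdots<B_n$ of equal length.
\begin{enumerate}[label=(\arabic*)]
\item By Theorem \ref{thm:chernoff}, with probability $1-n\exp(-\Omega(m))$, each $V_i$ has $\Theta(m/n)$ vertices mapped into $B_i$. Call this set $S_i$. Any transversal $u_i\in S_i$ then automatically satisfies $\psi(u_1)<\cdots<\psi(u_n)$.
\item The vertices of the sets $V_J$ are placed by $\psi$ uniformly and, after conditioning on the positions of $\bigcup_i V_i$, essentially independently of the $S_i$. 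So for a transversal $z\in\prod_{j\in J}S_j$, the position $\psi(v_z)$ behaves like a uniform point of the unused positions, about half of which lie in $A$.
\end{enumerate}

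\textbf{Encoding the order of the $x_J$.} We split $A$ into consecutive segments $A_0<A_1<\cdots<A_M$ and aim to realize a prescribed member $(G,<)$ as follows. The special vertex $x_I$ goes to some $v_z\in A_0$, and each $x_J$ goes to a $v_z$ lying in the segment $A_{r(J)}$, where $r(J)$ is the rank of $x_J$ in the prescribed order.

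The event $\mathcal{E}$ should then state that a certain auxiliary colouring is ``universal''. The colouring is the one that colours each $(k-1)$-tuple $z\in\prod_{j\in J}S_j$ by the index of the segment containing $\psi(v_z)$. Universality should be a robust pseudorandomness property: every not-too-small box $\prod_{j\in J}T_j$ with $T_j\subseteq S_j$ receives all colours in roughly the expected proportions. Such a box statement involves only polynomially many parameter choices for each fixed size profile, and it fails with probability $\exp(-\Omega(m^{c}))$ per box by Theorem \ref{thm:chernoff}. Assuming this, I would build the transversal greedily. We fix $u_1,u_2,\ldots$ one at a time while keeping, for every $J$, a large candidate box in which the colour $r(J)$ remains available. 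The $\exp(-\Omega(n^2))$ term should come from the few coarse events (the sizes of the $S_i$, and the distribution of the special edges) that are only as concentrated as the $n^2$-type quantities allow.

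\textbf{Main obstacle.} The hard step is exactly this greedy consistency. A single choice of $u_1,\ldots,u_n$ must satisfy about $M\approx n^{k-1}$ colour constraints simultaneously. Naive counting is too weak: there are only $|S_1|\cdots|S_n|\le m^n$ transversals, against a success probability of roughly $M^{-M}$ per transversal. So the proof cannot select a uniformly random transversal; it must exploit the freedom to discard vertices.

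What I would try first is to not insist that $x_i$ be realized in the fixed block $S_i$. Instead, I would reserve, for each $J$ in rank order, fresh disjoint sub-blocks of the $S_j$, so that constraints for different $J$ involve nearly independent randomness. If even this fails, the fallback is to weaken the definition of ``good'' to the members actually needed by Theorem \ref{thm: steppingup}, where the relative order of the $x_J$ is unconstrained. In that case it suffices to find one copy with $x_I$ first and all $x_J$ between $x_I$ and $x_1$. That version follows directly from steps (1) and (2) together with one Chernoff bound for the special edge.
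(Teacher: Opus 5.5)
\textbf{Verdict: there is a genuine gap.} Your main line targets a false statement, and your fallback, which is the correct statement and the one the paper proves, does not follow from your steps (1)--(2).

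\textbf{The main line aims at a false statement.} You read ``good'' literally: a copy of \emph{every} member of $\cG_I^{(k)}(n)$. Under that reading the lemma is false, and the count in your ``Main obstacle'' paragraph already proves it once you apply it to all of $R$ rather than to $\prod_i S_i$.

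A copy of $G$ in $R$ is determined by the images of $x_1,\dots,x_n$. Each $x_J$ must complete the image of $\{x_j\}_{j\in J}$ to an edge, and $R$ is a $(k,k-1)$-system, so that completion is unique. Hence $R$ contains at most $N^n=e^{O(n\log n)}$ copies of $G$. In each copy the $M=\binom{n-1}{k-1}$ vertices $x_J$ ($J\neq I$) are distinct. A uniform $\psi$ puts them in a prescribed relative order with probability exactly $1/M!$. So any fixed $(G,<)$ appears in $(R,<_\psi)$ with probability at most $N^n/M!=e^{-\Omega(n^2\log n)}$, and $\PP(\psi\text{ is good})\to 0$.

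This is a first-moment bound over all copies, so no discarding or greedy selection can beat it. Reserving fresh disjoint sub-blocks for different $J$ does not even produce a copy of $G$, since $x_j$ must be one vertex shared by all edges containing $j$. The lemma has to be read as in the final paragraph of the paper's proof, which is also all that $H\ordarrow\cG_I^{(k)}(n)$ in Theorem \ref{thm:winkler} needs: $\psi$ is bad if $(R,<_\psi)$ contains \emph{no} member of $\cG_I^{(k)}(n)$.

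\textbf{The fallback fails with your parameters.} Take $A$ to be the lower half and $u_1\in S_1\subseteq B_1$. Each $v_{z_J}$ then lands below $\psi(u_1)$ with probability about $1/2$. So a fixed transversal of $\prod_i S_i$ has all $M$ of its $v_{z_J}$ between $x_I$ and $x_1$ with probability $e^{-\Omega(M)}$. There are at most $m^n=e^{O(n\log n)}$ such transversals, and $M\ge\binom{n-1}{2}$, so with high probability none of them works.

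\textbf{The missing idea.} The window for the $x_J$ must contain all but an $O(1/M)$-fraction of the positions. The fraction must be small enough that a union bound over the $M$ constraints still leaves half of the transversals usable. The paper carries this out as follows.
\begin{enumerate}
\item It takes $X_I<X_\star<X_1<\dots<X_n$ with $|X_\star|=(1-\frac{1}{2n^{k-1}})N$, $|X_I|=\frac{N}{4n^{k-1}}$ and $|X_i|=\frac{N}{4n^{k}}$.
\item It fixes $W_i\subseteq V_i$ of size $t=\Theta(n^3)$ with labels in $X_i$.
\item It applies the lower tail of Theorem \ref{thm:chernoff} to each box $\prod_{j\in J}W_j$. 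This shows that at most $t^{k-1}/n^{k-1}$ of the box's vertices miss $X_\star$, with failure probability $\exp(-\Omega(t^{k-1}/n^{2k-2}))$. That is where the $\exp(-\Omega(n^2))$ comes from.
\item The $J$-projection of a uniform transversal of $\prod_i W_i$ is uniform on the box, and $M\le n^{k-1}/2$. So at least $t^n/2$ transversals are consistent.
\item By averaging, $t^{k-1}/2$ tuples of $\prod_{i\in I}W_i$ extend to consistent transversals.
\item Only then, with $V_I$ exposed last so that this set is independent of the labels of $V_I$, does your single Chernoff bound place one corresponding $v_z$ in $X_I$. This also requires checking that most of $X_I$ is still unused after the $V_J$ with $J\neq I$ are placed.
\end{enumerate}
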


\begin{proof}
   Fix integers $n,k\geq 3$, and define
\begin{align}\label{eq:alpha}
    \alpha:=\frac{1}{4n^{k-1}}, \quad \beta:=1-\frac{1}{2n^{k-1}}, \quad \gamma:=\frac{1}{4n^{k}}
\end{align}
to be real numbers that will be used throughout the proof. Consider the partition 
\begin{align*}
    [N]=X_I\cup X_{\star}\cup \bigcup_{i=1}^n X_i,
\end{align*}
where the sets are ordered by $X_I<X_{\star}<X_1<\ldots<X_n$ and have sizes
\begin{align*}
    |X_I|=\alpha N, \quad |X_{\star}|=\beta N, \quad |X_1|=\ldots=|X_n|=\gamma N.
\end{align*}
We expose the random bijection $\psi: V(R) \rightarrow [N]$ in three rounds:
\vspace{0.2cm}

\noindent Round 1: We expose the labels of $\bigcup_{i=1}^n V_i$.\\
\noindent Round 2: We expose the labels of $\bigcup_{J\neq I} V_J$.\\
\noindent Round 3: We expose the labels of $V_I$. 
\vspace{0.2cm}

In each round, we will show that a desired event $\cE_i$ holds with high probability for $1\leq i \leq 3$ (to be described later in the proof). Finally, we will conclude by showing that any bijection satisfying all $\cE_i$ is a good one. We now proceed with the details.

\vspace{0.2cm}

\noindent \textbf{Round 1}: We first expose the labels of the vertices in $\bigcup_{i=1}^n V_i$. Note that for $1 \leq i \leq n$, the random variable $|\psi(V_i)\cap X_i|$ follows the distribution of a hypergeometric random variable $\Hyp(N,|X_i|,|V_i|)$. By Theorem \ref{thm:chernoff}, together with (\ref{eq:m}) and (\ref{eq:alpha}), we obtain
\begin{align*}
    \PP\left(\Big||\psi(V_i)\cap X_i|-\gamma m\Big|>\frac{\gamma m}{2}\right)= \exp\left(-\Omega(\gamma m)\right) =\exp(-\Omega(n^3)).
\end{align*}
Therefore, by a union bound, with probability at least $1-\exp(-\Omega(n^3))$ we have the event
\begin{align}\label{eq:E1}
    \cE_1: \, |\psi(V_i)\cap X_i|=\left(1\pm \tfrac{1}{2}\right)\gamma m, \quad \forall 1\leq i \leq n.
\end{align}

    \vspace{0.2cm}

    \noindent \textbf{Round 2}: Consider all possible bijections conditioned on the event $\cE_1$. For $1\leq i \leq n$, let $W_i\subseteq \psi(V_i)\cap X_i$ be a subset of size
    \begin{align}\label{eq:t}
        t:=\frac{\gamma m}{2}=\Theta(n^3). 
    \end{align}
    For each $J\in \Omega_I\setminus\{I\}$, define
\begin{align*}
      W_J=\{v_z:\: z=(w_j)_{j\in J}\in\prod_{j\in J} W_j\}\subseteq V_J,
\end{align*}
the set of vertices extending the transversals $\{w_j\}_{j\in J}$ to an edge in $R$.

We now expose the labels of $\bigcup_{J\neq I} V_J$. Let
\begin{align*}
    X_{I}^2=X_{I}\setminus\left(\bigcup_{i=1}^n \psi(V_i)\right), 
    \quad 
    X_{\star}^{2}=X_{\star}\setminus\left(\bigcup_{i=1}^n \psi(V_i)\right)
\end{align*}
denote the remaining available labels in $X_I$ and $X_{\star}$ after Round~1. By (\ref{eq:m}) and (\ref{eq:alpha}),
\begin{align}\label{eq:newX}
    |X_{I}^{2}|&\geq |X_I|-\sum_{i=1}^n|V_i|=\alpha N-mn> \alpha N/2, \nonumber\\
    |X_{\star}^{2}|&\geq |X_{\star}|-\sum_{i=1}^n|V_i|=\beta N-mn> \left(\beta-\frac{1}{n^k}\right)N.
\end{align}

For each $J\in \Omega_I\setminus\{I\}$, the random variable $|\psi(W_J)\cap X_I^2|$ follows a hypergeometric distribution $\Hyp(N-mn, |X_{I}^{2}|, |W_J|)$ with mean
\begin{align*}
    \EE\left(|\psi(W_J)\cap X_I^2|\,\Big| \, \cE_1\right)=\frac{|X_I^2|\cdot|W_J|}{N-mn}\leq 2\alpha t^{k-1}.
\end{align*}
Hence, Theorem \ref{thm:chernoff}, together with (\ref{eq:m}), (\ref{eq:alpha}) and (\ref{eq:t}) gives us that
\begin{align*}
    \PP\left(|\psi(W_J)\cap X_I^2|>3\alpha t^{k-1}\,\Big| \, \cE_1\right)
    = \exp\left(-\Omega(\alpha t^{k-1})\right)
    = \exp(-\Omega(n^{2k-2}))=\exp(-\Omega(n^4)),
\end{align*}
for $k\geq 3$.

Similarly, $|\psi(W_J)\cap X_{\star}^{(2)}|\sim \Hyp(N-mn, |X_{\star}^{(2)}|, |W_J|)$, and by (\ref{eq:newX}),
\begin{align*}
    t^{k-1}\geq\EE\left(|\psi(W_J)\cap X_{\star}^{(2)}|\,\Big|\, \cE_1\right)
    = \frac{|X_{\star}^{2}|\cdot|W_J|}{N-mn}
    \geq \left(\beta-\frac{1}{n^k}\right) t^{k-1}.
\end{align*}
Therefore, Theorem \ref{thm:chernoff} applied with $s=\left(\frac{1}{2n^{k-1}}-\frac{1}{n^k}\right)t^{k-1}$, (\ref{eq:m}), (\ref{eq:alpha}) and (\ref{eq:t}) gives us that
\begin{align*}
    \PP\left(|\psi(W_J)\cap X_\star^{2}|< \left(\beta - \frac{1}{2n^{k-1}}\right)t^{k-1}\, \Big |\, \cE_1\right)
    = \exp\left(-\Omega\!\left(\tfrac{t^{k-1}}{n^{2k-2}}\right)\right)
    = \exp(-\Omega(n^2)),
\end{align*}
for $k\geq 3$. Thus, by a union bound, with probability at least $1-\exp(-\Omega(n^2))$, the event
\begin{align}\label{eq:E2}
    \cE_2:\: |\psi(W_J)\cap X_{I}^{2}|\leq 3\alpha t^{k-1}
     \text{  and  } 
    |\psi(W_J)\cap X_{\star}^{2}|\geq\left(\beta-\frac{1}{2n^{k-1}}\right)t^{k-1},
    \quad \forall J\in \Omega_I\setminus\{I\},
\end{align}
holds.

\vspace{0.2cm}

\noindent \textbf{Round 3}: Consider all bijections of $V_I$ conditioned on $\cE_1$ and $\cE_2$. A transversal $z=(w_i)_{i=1}^n \in \prod_{i=1}^n W_i$ is \emph{consistent} if for every $J\in \Omega_I\setminus \{I\}$, the vertex $w_z\in W_J$ lies in $X_\star^{(2)}$. In particular, for a consistent trasnversal, the vertex set
\begin{align*}
   \{v_z:\: z=(w_j)_{j\in J}, \,J\in \Omega_I\setminus\{I\}\} \cup \{w_i:\: 1\leq i \leq n\}
\end{align*}
forms a partial copy of an element of $\cG_I^{(k)}(n)$. To complete the copy, it suffices to find a consistent transversal $(w_i)_{i=1}^n$ with vertex $v_{z}\in X_{I}\cap V_I$ for $z=(w_i)_{i\in I}$.

For each $J\in \Omega_I\setminus\{I\}$, the event $\cE_2$ guarantees that at most $(1-\beta+\frac{1}{2n^{k-1}})t^{k-1}$ tuples in $\prod_{j\in J} W_j$ correspond to vertices in $W_J$ outside $X_\star^{(2)}$. Thus, by (\ref{eq:m}), (\ref{eq:alpha}), and $k\geq 3$, the total number of non-consistent transversals is at most
\begin{align*}
    \binom{n-1}{k-1}\left(1-\beta+\frac{1}{2n^{k-1}}\right)t^{n}\leq \frac{t^n}{2}.
\end{align*}
In other words, at least $t^n/2$ transversals are consistent. By an averaging argument, there exists a set $Z \subseteq \prod_{i\in I} W_i$ of size $t^{k-1}/2$ such that each element can be extended to a consistent transversal. Let 
\begin{align*}
    \tilde{Z}:=\{v_z\in V_I:\: z=(w_i)_{i\in I}\in Z\} \subseteq V_I
\end{align*}
 be the set of vertices in $V_I$ corresponding to the $(k-1)$-tuples in $Z$, i.e., those satisfying property \ref{eq:star} of $R$. In particular, $|\tilde{Z}|\geq t^{k-1}/2$.

    We now expose the labeling of $V_I$. Let $X_I^3:=X_I^2\setminus\left(\bigcup_{J\neq I} V_J\right)$ be the set of remaining labels available in $X_I$. By (\ref{eq:m}), (\ref{eq:alpha}), (\ref{eq:newX}), and (\ref{eq:E2}), we have
\begin{align*}
    |X_I^3|\geq|X_I^2|-\sum_{J\neq I}|\psi(W_J)\cap X_I^2|> \frac{\alpha N}{2}-\binom{n-1}{k-1}\cdot3\alpha t^{k-1}\geq \frac{\alpha N}{4}.
\end{align*}
As in the previous two rounds, note that $|\psi(\tilde{Z})\cap X_I^3| \sim \Hyp(|V_I|, |X_I^3|, |\tilde{Z}|)$ with mean
\begin{align*}
    \EE\left(|\psi(\tilde{Z})\cap X_I^3| \, \big| \, \cE_1, \, \cE_2 \right)=\frac{|X_I^3|\cdot|\tilde{Z}|}{|V_I|}\geq \frac{(\alpha N/4) \cdot t^{k-1}/2}{m^k}=\Omega(n^{3k-3}).
\end{align*}
Then by Theorem \ref{thm:chernoff}, we have
\begin{align*}
    \PP(|\psi(\tilde{Z})\cap X_I^3|=0)= \exp(-\Omega(n^{3k-3}))=\exp(-\Omega(n^5))
\end{align*}
for $k\geq 3$. This implies that, with probability at least $1-\exp(-\Omega(n^5))$, the event
\begin{align}\label{eq:E3}
    \cE_3:\: \psi(\tilde{Z})\cap X_I^3\neq \emptyset
\end{align}
holds.

\vspace{0.2cm}
\noindent \textbf{Finishing the proof}: To finish the proof, note that a bijection satisfying events $\cE_1$, $\cE_2$, and $\cE_3$ contains a copy $\tilde{G}$ of $G$ with vertices
\begin{align*}
    V(\tilde{G})=\{v_i \in V_i:\:1\leq i \leq n\} \cup \left\{v_{z_J} \in V_J:\: z_J=(v_j)_{j\in J}\in \prod_{j\in J}V_j,\, J\in  \Omega_I\right\}  
\end{align*}
satisfying
\begin{align*}
    v_i\in X_i, \quad v_{z_I} \in X_I, \quad \text{and} \quad v_{z_J}\in X_\star,
\end{align*}
for all $J\in \Omega_I\setminus \{I\}$. Hence $(\tilde{G}, <_{\psi})\in \cG_I^{(k)}(n)$. Recall that a bijection $\psi$ is bad if $(R,<_{\psi})$ does not contain a member of $\cG_I^{(k)}(n)$. Therefore, if $\psi$ is bad, then at least one of the events $\cE_1$, $\cE_2$, or $\cE_3$ does not hold, and by (\ref{eq:E1}), (\ref{eq:E2}), and (\ref{eq:E3}) we obtain that
\begin{align*}
    \PP(\psi \text{ is bad})= \exp(-\Omega(n^2)).
\end{align*}
This concludes the proof of the lemma.
\end{proof}

\begin{remark}
    We remark that a similar statement in Theorem \ref{thm:winkler} holds if the map $\psi$ is injective or if we replace $\cG_I^{(k)}(n)$ by $\rev \cG_I^{(k)}(n)$.
\end{remark}

\subsection{Proof of Theorem \ref{thm:winkler}}

Let $p$ be the smallest prime greater or equal to $N$. By Betrand's postulate, one can always have 
\begin{align}\label{eq:p}
 N\leq p \leq 2N.    
\end{align}
A projective plane $\cL_p$ of order $p$ is a $(p+1)$-graph satisfying the following properties
\begin{enumerate}[label=(P\arabic*)]
    \item\label{eq:sizeofL} $v(\cL_p)=e(\cL_p)=p^2+p+1$,
    \item\label{eq:LisSteiner} $\cL_p$ is a full Steiner $(p+1,2)$-system,
    \item\label{eq:Lintersect} If $L_1,L_2 \in \cL_p$, then $|L_1\cap L_2|=1$.
\end{enumerate}
It is well-known that the projective plane $\cL_p$ exists for every prime $p$.

We construct the graph $H$ as follows: Let $R$ be the $k$-graph defined in the previous subsection (Subsection \ref{subsec:construction}) and let $\tilde{R}$ the hypergraph on $p$ vertices obtained by adding $p-N$ isolated vertices to $R$. Fix an arbitrary bijection $f: V(\tilde{R})\rightarrow [p]$. We define the vertex set of $H$ to be the vertices of $\cL_P$, i.e., $V(H)=V(\cL_p)$. For each $L\in \cL_p$ consider a bijection $\psi_L:V(L)\rightarrow [p]$ chosen uniformly at random. Let $\tilde{R}_L$ be the hypergraph with vertex set $V(L)$ induced by the map $f^{-1}\circ \psi_L$. That is, $e \in \tilde{R}_{L}$ if and only if $f^{-1}\circ\psi_L(e)\in \tilde{R}$. We set
\begin{align*}
    H=\bigcup_{L\in \cL_p} \tilde{R}_L.
\end{align*}
Note that since every $\tilde{R}_L$ is a $(k,k-1)$-system, by \ref{eq:LisSteiner} we obtain that $H$ is a $(k,k-1)$-system. Moreover, by (\ref{eq:m}) and (\ref{eq:p}) we have
\begin{align}\label{eq:sizeH}
    v(H)=p^2+p+1\leq 4N^2+2N+1=O(n^{2k^2+8k-2}).
\end{align}

Therefore, it only remains to show that with high probability
\begin{align*}
    H\ordarrow \cG_I^{(k)}(n) \quad \text{and} \quad H\ordarrow \rev\cG_I^{(k)}(n)
\end{align*}
holds. Fix a ordering $<$ of $V(H)$. For each $L \in \cL_p$, the ordering $<$ induces a bijection $g:V(L) \rightarrow [p]$. Thus, the fact that $\tilde{R}_L$ contains an element of $\cG_I^{(k)}(n)$ is equivalent to the composition $\varphi_L:=g\circ f^{-1}\circ \psi_L$ being bad. By Theorem \ref{thm:winkler} and the fact that $\cL_p$ is a linear graph, we obtain that
\begin{align*}
    \PP\left((H,<) \text{ does not contain a member of $\cG_I^{(k)}(n)$}\right)=\prod_{L\in \cL_p}\PP(\varphi_L \text{ is bad})=\exp(-\Omega(n^2\cdot e(\cL_P)))
\end{align*}
Similarly, by the remark in Theorem \ref{thm:winkler}, we have
\begin{align*}
    \PP\left((H,<) \text{ does not contain a member of $\rev\cG_I^{(k)}(n)$}\right)=\exp(-\Omega(n^2\cdot e(\cL_P))).
\end{align*}
Taking an union bound over all possible $v(H)!$ orderings, we obtain by \ref{eq:sizeofL}, (\ref{eq:sizeH}) and the definition of $V(H)$ that
\begin{align*}
    \PP(H\not\ordarrow \cG_I^{(k)}(n) \text{ or } H\not\ordarrow \cG_I^{(k)}(n))=2v(H)!\cdot \exp(-\Omega(n^2\cdot v(H)))=o(1). 
\end{align*}
This concludes the proof of the theorem. \qed

\section{Proof of Theorem \ref{thm:main}}\label{sec:main}

We finally put together the results in Section \ref{sec:steppingup} and \ref{sec:orderings} to prove the main theorem.

\begin{proof}[Proof of Theorem \ref{thm:main}]
Given $k\geq 3$, let $b_k$ and $C$ be the constants given by Theorem~\ref{thm: steppingup} and~\ref{thm:winkler}. Let $n_2=\max\{n_0, n_1\}$ where $n_0$ and $n_1$ are the integers obtained by Theorem~\ref{thm: steppingup} and~\ref{thm:winkler}. We set $h_0=n_2^C$. 

Given an integer $h\geq h_0$, let $n=\lfloor h^{1/C} \rfloor$. By definition, we have that $n\geq n_0$. Therefore, one can apply Theorem \ref{thm: steppingup} to obtain a index set $I \in [n]^{(k-1)}$ with $1\in I$ and a $4$-coloring $\chi:\left[t_{k-1}(n^{b_k})\right]^{(k)}\rightarrow \ZZ_4$ with the following property:
\begin{enumerate}[label=($\star \star$)]
\item\label{eq:chi} There are no monochromatic copies of any ordered graph $(F,<)$ which contains a copy of a member of $\cF_I^{(k)}(n)$ and $\rev\cF_I^{(k)}(n)$.
\end{enumerate}
Since $n\geq n_1$, we can apply Theorem \ref{thm:winkler} to obtain a $(k,k-1)$-system $H$ with the property that
\begin{align*}
 H \ordarrow \cG_I^{(k)}(n) \quad \text{and} \quad H \ordarrow \rev\cG_I^{(k)}(n).
\end{align*}

We claim that the coloring $\chi$ obtained by Theorem \ref{thm: steppingup} on $\left[t_{k-1}(n^{b_k})\right]^{(k)}$ does not contain a monochromatic copy of $H$. Suppose by contradiction that there exists a monochromatic copy of $H$. By Theorem \ref{thm:winkler} and the natural linear ordering of $\left[t_{k-1}(n^{b_k})\right]$, we obtain a monochromatic ordered copy of a member of $\cG_{I}^{(k)}(n)\subseteq \cF_I^{(k)}(n)$ and a monochromatic copy of a member of $\rev \cG_I^{(k)}(n)\subseteq \rev\cF_I^{(k)}(n)$. However, this contradicts property \ref{eq:chi} of $\chi$. Therefore, we obtain that 
\begin{align*}
    R(H;4)\geq t_{k-1}(n^{b_k})=t_{k-1}(h^{c_k})
\end{align*}
for some constant $c_k>0$. This finishes the proof.
\end{proof}

\section{Concluding remarks}\label{sec:remarks}

In this paper, we showed that there exist $(k,k-1)$-systems whose Ramsey numbers admit lower bounds of the same order of magnitude as those of the complete $k$-uniform hypergraph. One caveat of our proof is that we require at least $4$ colors. This is unlike in \cites{EHR65, CFS13}, where one can still apply the step-up method for $2$ colors and uniformity at least $k \geq 4$. More precisely, the authors of \cites{EHR65, CFS13} proved that
\begin{align*}
    R(K_n^{(k)};2)\geq t_{k-2}(cn^2)
\end{align*}
for some absolute positive constant $c$. Note that this result is only one power away from the upper bound described in the introduction. It would be interesting to know whether one can obtain similar lower bounds for $(k, k-1)$-systems with $2$ colors.

Perhaps the most interesting question concerns the Ramsey numbers of other $(k,\ell)$-systems.
\begin{problem}
    Determine for which $2\leq \ell \leq k$ there exist an integer $r \geq 2$ and a constant $c > 0$ such that, for all sufficiently large $n$, one can find a $(k,\ell)$-system $H$ on $n$ vertices satisfying
    \[
        R(H; r) \;\geq\; t_{k-1}(n^c).
    \]
\end{problem}
Another classical parameter for measuring sparsity in a hypergraph is its girth. A cycle of length $t$ in a $k$-graph is a sequence of vertices and edges $v_0,e_0,v_1,e_1,\ldots,v_{t-1},e_{t-1}$ such that $v_i$ belongs to both $e_{i-1}$ and $e_i$, where the indices are taken modulo $t$. A $k$-graph $H$ has girth $g$ if it does not contain cycles of length smaller than $g$. Note that a $k$-graph of girth $g\geq 3$ must be linear. It would be interesting to determine the Ramsey number of linear $3$-graphs of large girth.
\begin{problem}
    Determine whether there exists an integer $r\geq 2$, such that for every $g$ and all sufficiently large $n$, one can find a linear $3$-graph $H$ of girth $g$ on $n$ vertices with $R(H; r)$ doubly exponential.
\end{problem}
We remark that an argument similar to that of Theorem \ref{thm:main} yields a triangle-free linear $H$ with doubly exponential Ramsey number. However, we were not able to find such a hypergraph for $g=5$.

\bibliography{ref}

@article {Erd47,
    AUTHOR = {Erd\H{o}s, P.},
     TITLE = {Some remarks on the theory of graphs},
   JOURNAL = {Bull. Amer. Math. Soc.},
  FJOURNAL = {Bulletin of the American Mathematical Society},
    VOLUME = {53},
      YEAR = {1947},
     PAGES = {292--294},
      ISSN = {0002-9904},
   MRCLASS = {56.0X},
  MRNUMBER = {19911},
MRREVIEWER = {H.\ S. M. Coxeter},
       DOI = {10.1090/S0002-9904-1947-08785-1},
       URL = {https://doi.org/10.1090/S0002-9904-1947-08785-1},
}

@book {GRSBook,
    AUTHOR = {Graham, Ronald L. and Rothschild, Bruce L. and Spencer, Joel
              H.},
     TITLE = {Ramsey theory},
    SERIES = {Wiley-Interscience Series in Discrete Mathematics and
              Optimization},
   EDITION = {Second},
      NOTE = {A Wiley-Interscience Publication},
 PUBLISHER = {John Wiley \& Sons, Inc., New York},
      YEAR = {1990},
     PAGES = {xii+196},
      ISBN = {0-471-50046-1},
   MRCLASS = {05-02 (04A20 05A99 05C55 54H20)},
  MRNUMBER = {1044995},
}

@article{EHR65,
  author = {Erd{\H{o}}s, P. and Hajnal, A. and Rado, R.},
  title = {Partition relations for cardinal numbers},
  journal = {Acta Math. Acad. Sci. Hungar.},
  volume = {16},
  year = {1965},
  pages = {93--196},
  issn = {0001-5954},
  doi = {10.1007/BF01886396},
  mrclass = {04.60},
  mrnumber = {202613},
  mrreviewer = {L. Gillman},
  url = {https://doi.org/10.1007/BF01886396},
}

@book {JLR00,
    AUTHOR = {Janson, Svante and \L{}uczak, Tomasz and Rucinski, Andrzej},
     TITLE = {Random graphs},
    SERIES = {Wiley-Interscience Series in Discrete Mathematics and
              Optimization},
 PUBLISHER = {Wiley-Interscience, New York},
      YEAR = {2000},
     PAGES = {xii+333},
      ISBN = {0-471-17541-2},
   MRCLASS = {05C80 (60C05 82B41)},
  MRNUMBER = {1782847},
MRREVIEWER = {Mark\ R.\ Jerrum},
       DOI = {10.1002/9781118032718},
       URL = {https://doi.org/10.1002/9781118032718},
}

@article {RW89,
    AUTHOR = {R\"{o}dl, Vojt\v{e}ch and Winkler, Peter},
     TITLE = {A {R}amsey-type theorem for orderings of a graph},
   JOURNAL = {SIAM J. Discrete Math.},
  FJOURNAL = {SIAM Journal on Discrete Mathematics},
    VOLUME = {2},
      YEAR = {1989},
    NUMBER = {3},
     PAGES = {402--406},
      ISSN = {0895-4801},
   MRCLASS = {05C55 (05C35 06F99)},
  MRNUMBER = {1002703},
MRREVIEWER = {R.\ H.\ Schelp},
       DOI = {10.1137/0402035},
       URL = {https://doi.org/10.1137/0402035},
}

@article {R29,
    AUTHOR = {Ramsey, F. P.},
     TITLE = {On a {P}roblem of {F}ormal {L}ogic},
   JOURNAL = {Proc. London Math. Soc. (2)},
  FJOURNAL = {Proceedings of the London Mathematical Society. Second Series},
    VOLUME = {30},
      YEAR = {1929},
    NUMBER = {4},
     PAGES = {264--286},
      ISSN = {0024-6115},
   MRCLASS = {DML},
  MRNUMBER = {1576401},
       DOI = {10.1112/plms/s2-30.1.264},
       URL = {https://doi.org/10.1112/plms/s2-30.1.264},
}

@article {ES35,
    AUTHOR = {Erd\H{o}s, P. and Szekeres, G.},
     TITLE = {A combinatorial problem in geometry},
   JOURNAL = {Compositio Math.},
  FJOURNAL = {Compositio Mathematica},
    VOLUME = {2},
      YEAR = {1935},
     PAGES = {463--470},
      ISSN = {0010-437X,1570-5846},
   MRCLASS = {99-04},
  MRNUMBER = {1556929},
       URL = {http://www.numdam.org/item?id=CM_1935__2__463_0},
}

@article {ER52,
    AUTHOR = {Erd\H{o}s, P. and Rado, R.},
     TITLE = {Combinatorial theorems on classifications of subsets of a
              given set},
   JOURNAL = {Proc. London Math. Soc. (3)},
  FJOURNAL = {Proceedings of the London Mathematical Society. Third Series},
    VOLUME = {2},
      YEAR = {1952},
     PAGES = {417--439},
      ISSN = {0024-6115,1460-244X},
   MRCLASS = {27.2X},
  MRNUMBER = {65615},
MRREVIEWER = {J.\ Riguet},
       DOI = {10.1112/plms/s3-2.1.417},
       URL = {https://doi.org/10.1112/plms/s3-2.1.417},
}

@article {Ab72,
    AUTHOR = {Abbott, H. L.},
     TITLE = {A note on {R}amsey's theorem},
   JOURNAL = {Canad. Math. Bull.},
  FJOURNAL = {Canadian Mathematical Bulletin. Bulletin Canadien de
              Math\'ematiques},
    VOLUME = {15},
      YEAR = {1972},
     PAGES = {9--10},
      ISSN = {0008-4395,1496-4287},
   MRCLASS = {05C15},
  MRNUMBER = {314673},
MRREVIEWER = {Vaclav\ Chv\'atal},
       DOI = {10.4153/CMB-1972-002-5},
       URL = {https://doi.org/10.4153/CMB-1972-002-5},
}

@article {CFS13,
    AUTHOR = {Conlon, David and Fox, Jacob and Sudakov, Benny},
     TITLE = {An improved bound for the stepping-up lemma},
   JOURNAL = {Discrete Appl. Math.},
  FJOURNAL = {Discrete Applied Mathematics. The Journal of Combinatorial
              Algorithms, Informatics and Computational Sciences},
    VOLUME = {161},
      YEAR = {2013},
    NUMBER = {9},
     PAGES = {1191--1196},
      ISSN = {0166-218X,1872-6771},
   MRCLASS = {05C55 (05C65)},
  MRNUMBER = {3030610},
       DOI = {10.1016/j.dam.2010.10.013},
       URL = {https://doi.org/10.1016/j.dam.2010.10.013},
}

@article {CF21,
    AUTHOR = {Conlon, David and Ferber, Asaf},
     TITLE = {Lower bounds for multicolor {R}amsey numbers},
   JOURNAL = {Adv. Math.},
  FJOURNAL = {Advances in Mathematics},
    VOLUME = {378},
      YEAR = {2021},
     PAGES = {Paper No. 107528, 5},
      ISSN = {0001-8708,1090-2082},
   MRCLASS = {05C55},
  MRNUMBER = {4186575},
MRREVIEWER = {N.\ Hindman},
       DOI = {10.1016/j.aim.2020.107528},
       URL = {https://doi.org/10.1016/j.aim.2020.107528},
}

@article {NORS08,
    AUTHOR = {Nagle, Brendan and Olsen, Sayaka and R\"odl, Vojt\v{e}ch and
              Schacht, Mathias},
     TITLE = {On the {R}amsey number of sparse 3-graphs},
   JOURNAL = {Graphs Combin.},
  FJOURNAL = {Graphs and Combinatorics},
    VOLUME = {24},
      YEAR = {2008},
    NUMBER = {3},
     PAGES = {205--228},
      ISSN = {0911-0119,1435-5914},
   MRCLASS = {05C65 (05C15 05C55)},
  MRNUMBER = {2410941},
MRREVIEWER = {Ewa\ Drgas-Burchardt},
       DOI = {10.1007/s00373-008-0784-x},
       URL = {https://doi.org/10.1007/s00373-008-0784-x},
}

@article {CFKO08,
    AUTHOR = {Cooley, Oliver and Fountoulakis, Nikolaos and K\"uhn, Daniela
              and Osthus, Deryk},
     TITLE = {3-uniform hypergraphs of bounded degree have linear {R}amsey
              numbers},
   JOURNAL = {J. Combin. Theory Ser. B},
  FJOURNAL = {Journal of Combinatorial Theory. Series B},
    VOLUME = {98},
      YEAR = {2008},
    NUMBER = {3},
     PAGES = {484--505},
      ISSN = {0095-8956,1096-0902},
   MRCLASS = {05C55 (05C65)},
  MRNUMBER = {2401125},
MRREVIEWER = {Boris\ Bukh},
       DOI = {10.1016/j.jctb.2007.08.008},
       URL = {https://doi.org/10.1016/j.jctb.2007.08.008},
}

@article {CFKO09,
    AUTHOR = {Cooley, Oliver and Fountoulakis, Nikolaos and K\"uhn, Daniela
              and Osthus, Deryk},
     TITLE = {Embeddings and {R}amsey numbers of sparse {$k$}-uniform
              hypergraphs},
   JOURNAL = {Combinatorica},
  FJOURNAL = {Combinatorica. An International Journal on Combinatorics and
              the Theory of Computing},
    VOLUME = {29},
      YEAR = {2009},
    NUMBER = {3},
     PAGES = {263--297},
      ISSN = {0209-9683,1439-6912},
   MRCLASS = {05C55 (05C65 05D10)},
  MRNUMBER = {2520273},
MRREVIEWER = {Andr\'as\ Gy\'arf\'as},
       DOI = {10.1007/s00493-009-2356-y},
       URL = {https://doi.org/10.1007/s00493-009-2356-y},
}

@article {CRST83,
    AUTHOR = {Chvat\'al, C. and R\"odl, V. and Szemer\'edi, E. and Trotter,
              Jr., W. T.},
     TITLE = {The {R}amsey number of a graph with bounded maximum degree},
   JOURNAL = {J. Combin. Theory Ser. B},
  FJOURNAL = {Journal of Combinatorial Theory. Series B},
    VOLUME = {34},
      YEAR = {1983},
    NUMBER = {3},
     PAGES = {239--243},
      ISSN = {0095-8956,1096-0902},
   MRCLASS = {05C55},
  MRNUMBER = {714447},
MRREVIEWER = {Stefan\ A.\ Burr},
       DOI = {10.1016/0095-8956(83)90037-0},
       URL = {https://doi.org/10.1016/0095-8956(83)90037-0},
}

@misc{ CFSunp,
	author={Conlon, D.},
	note={Personal communication},
	date={2025},
}

@article {CFS09,
    AUTHOR = {Conlon, David and Fox, Jacob and Sudakov, Benny},
     TITLE = {Ramsey numbers of sparse hypergraphs},
   JOURNAL = {Random Structures Algorithms},
  FJOURNAL = {Random Structures \& Algorithms},
    VOLUME = {35},
      YEAR = {2009},
    NUMBER = {1},
     PAGES = {1--14},
      ISSN = {1042-9832,1098-2418},
   MRCLASS = {05D10 (05C55 05C65)},
  MRNUMBER = {2532871},
MRREVIEWER = {J.\ Spencer},
       DOI = {10.1002/rsa.20260},
       URL = {https://doi.org/10.1002/rsa.20260},
}

\end{document}